\title{On the vanishing of the Rokhlin invariant}
\author{Tetsuhiro Moriyama}
\address{Institut Fourier\\100 rue des Maths\\BP 74 38402 St Martin d'H\'eres\\France}
\email{moriyama@fourier.ujf-grenoble.fr}
\email{tetsuhir@ms.u-tokyo.ac.jp}
\urladdr{}
\theoremstyle{plain}
\newtheorem{theorem}{Theorem}[section]    
\newtheorem{lemma}{Lemma}[section]          
\newtheorem{proposition}{Proposition}[section]          
\theoremstyle{definition}
\newtheorem{definition}{Definition}[section]    
\newtheorem{remark}{Remark}[section]             
\newtheorem{notation}{Notation}[section]
\let\c@lemma=\c@theorem
\let\c@proposition=\c@theorem
\let\c@corollary=\c@theorem
\let\c@definition=\c@theorem
\let\c@remark=\c@theorem
\let\c@example=\c@theorem
\let\c@notation=\c@theorem
\let\c@assertion=\c@theorem
\theoremstyle{plain} 
\newtheorem{mainthm}{Theorem}
\newtheorem{maincor}{Corollary}          
\let\c@maincor=\c@mainthm
\numberwithin{equation}{section}
\renewcommand{\a}{\alpha}
\renewcommand{\b}{\beta}
\renewcommand{\d}{\delta}
\newcommand{\vp}{\varphi}
\renewcommand{\i}{\iota}
\newcommand{\s}{\sigma}
\newcommand{\up}{\upsilon}
\renewcommand{\l}{\lambda}
\renewcommand{\D}{\Delta}
\renewcommand{\O}{\Omega}
\renewcommand{\H}{\mathbb{H}}
\newcommand{\CP}{{\mathbb{C}P}}
\newcommand{\RP}{{\mathbb{R}P}}
\newcommand{\HP}{{\mathbb{H}P}}
\newcommand{\HX}{\Hat{X}}
\newcommand{\HM}{\Hat{M}}
\newcommand{\HV}{\Hat{V}}
\newcommand{\HQ}{\Hat{Q}}
\newcommand{\norm}[1]{{\|#1\|}}
\newcommand{\x}{\times}
\newcommand{\bd}[1]{{\partial #1}}
\newcommand{\es}{\emptyset}
\newcommand{\Image}{\operatorname{Im}}
\newcommand{\Ker}{\operatorname{Ker}}
\newcommand{\Sign}{\operatorname{Sign}}
\newcommand{\Os}{\Omega^{spin}}
\newcommand{\Oes}{\Omega_{6}^{e,spin}}
\newcommand{\Te}{\Tilde{e}}
\newcommand{\Int}{{\operatorname{Int}\,}}
\newcommand{\Aut}{{\operatorname{Aut}}}
\begin{document}

\begin{abstract}    
  It is a natural consequence of fundamental properties of the Casson invariant
  that the Rokhlin invariant $\mu(M)$ of an amphichiral integral homology $3$--sphere
  $M$ vanishes.
  In this paper, we give a new direct proof of this vanishing property.
  For such an $M$,
  we construct a manifold pair $(Y,Q)$ of dimensions $6$ and $3$
  equipped with some additional structure 
  ($6$--dimensional spin $e$-manifold),
   such that 
  $Q \cong M \amalg M \amalg (-M)$, and $(Y,Q) \cong (-Y,-Q)$.
  We prove that $(Y,Q)$ bounds a $7$--dimensional spin $e$--manifold $(Z,X)$
  by studying the cobordism group of $6$--dimensional spin  $e$-manifolds and
  the $\Z/2$--actions on the two--point configuration space of
  $M \setminus \left\{ pt \right\}$.
  For any such $(Z,X)$, the signature of $X$ vanishes, and this implies $\mu(M) = 0$.
  The idea of the construction of $(Y,Q)$ comes from the definition of the Kontsevich--Kuperberg--Thurston
  invariant for rational homology $3$--spheres.
\end{abstract}

\maketitle




\section{Introduction and Main results}
\label{sec:introduction}

\subsection{Introduction}
\label{sec:intro}
The Rokhlin invariant $\mu(M)$  
of a closed oriented spin $3$--manifold $M$ is defined by
\begin{equation*}
  \mu(M) = \Sign X \pmod{16},
\end{equation*}
where $X$ is a smooth compact oriented spin $4$--manifold bounded by $M$ as a spin manifold, and $\Sign X$ is
the signature of $X$. 
If $M$ is a $\Z/2$--homology $3$--sphere, then it admits a unique spin structure,
and so $\mu(M)$ is a topological invariant of $M$.
In 1980's, Casson defined an integer--valued invariant $\lambda(M)$, what is now called the
Casson invariant, for oriented integral homology $3$--spheres, and proved the following fundamental
properties for $\l$ (see~\cite{akbulut-casson}):
\begin{align}
  \l(-M) &= -\l(M)\label{eq:l1}\\
  8 \l(M) &\equiv \mu(M) \pmod{16}\label{eq:l2}
\end{align}
It is a natural consequence of \eqref{eq:l1} and \eqref{eq:l2} that, if 
$M$ is amphichiral (namely, $M$ admits a
self--homeomorphism reversing the orientation), 
then its Rokhlin invariant vanishes:
\begin{equation}
 M \cong -M \quad \Longrightarrow \quad \mu(M) = 0
 \label{eq:vanish}
 \end{equation}

In this paper, we give a new proof of this vanishing property 
for integral homology $3$--spheres (\fullref{maincor}).
We might say that our approach is more direct 
in the sense that we only consider the signature of $4$--manifolds or related characteristic
classes (\fullref{rem:direct}).

\begin{remark}
  Walker~\cite{walker} extended the Casson invariant to a rational--valued invariant $\l_{W}(M)$
  for oriented 
  rational homology $3$--spheres, such that $\l_{W}(M) = 2\l(M)$ if $M$ is an integral homology
  $3$--sphere.
  He proved that 
  $
	\l_{W}(-M) = -\l_{W}(M)
  $
  holds for any $M$, and 
  $
	4 |H_{1}(M;\Z)|^{2} \l_{W}(M) \equiv \mu(M) \pmod{16} 
  $
holds for any $\Z/2$--homology $3$--spheres, where
$|A|$ denotes the number of elements in a set $A$.
These two properties imply that the same statement \eqref{eq:vanish}  holds
for all $\Z/2$--homology $3$--spheres.
\end{remark}

\begin{remark}
  Some partial proofs of the vanishing property 
  have been given by several authors
  (Galewski~\cite{galewski-orientation-reversing}, Kawauchi~\cite{kawa:ori-rev}~\cite{kawa:vanish}, 
  Pao--Hsiang~\cite{pao-hsiang}, Siebenman~\cite{siebenmann-vanishing},
  etc.) before the Casson invariant was defined. 
\end{remark}

\subsection{Outline of the proof}
\label{sec:outline-proof}
We outline our proof of \eqref{eq:vanish} for integral homology
$3$--spheres (\fullref{maincor}), 
without giving precise definitions and computations.
See \fullref{sec:ee} and \fullref{sec:results} for more details.
Yet another proof is also given in \fullref{sec:yap} (see also \fullref{rem:yap}).

\rk{An invariant $\s$}
An $n$--dimensional $e$-manifold $\a = (W,V,e)$ is roughly a manifold pair $(W,V)$ of
dimensions $n$ and $n-3$ equipped with a cohomology class $e \in H^{2}(W \setminus V; \Q)$
called an $e$--class.
In our previous paper~\cite{moriyama:emb63}, we defined a rational--valued invariant
$\s(\a)$ for $6$--dimensional closed $e$-manifolds
such that $\s(-\a) = -\s(\a)$, and that
$
  \s(\bd{\b}) = \Sign X
$
for a $7$--dimensional $e$-manifold $\b = (Z,X,e)$
(\fullref{thm:sigma}).
\begin{proof}[Outline of the proof]
For an oriented integral homology $3$--sphere $M$, we construct a $6$--dimensional closed 
spin $e$-manifold 
$
	\a_{M} = (Y,Q,e_{M})
$
($Y$ and $Q$ are spin)
such that $Q \cong M \amalg M \amalg (-M)$ and $\a_{-M} \cong -\a_{M}$.
We can prove that $\a_{M}$ is spin null--cobordant
(\fullref{main:spin-cob}).
Namely, there exists a spin $e$-manifold
$\b = (Z,X,e)$ such that $\bd{\b} \cong \a_{M}$.
Therefore, 
\[
	\s(\a_{M}) = \Sign X  \equiv \mu(M) \pmod{16}.
\]
If $M \cong -M$,
then $\a_{M} \cong -\a_{M}$ and $\s(\a_{M}) = 0$.
Consequently, $\mu(M) \equiv 0$. 
\end{proof}

\subsection{\texorpdfstring{$e$-classes and $e$-manifolds}{e-classes and e-manifolds}}
\label{sec:ee}
In~\cite{moriyama:emb63}, we introduced
the notion of $e$-class and $e$-manifold.
Let $(Z,X)$ be a pair of (smooth, oriented, and compact) manifold $Z$ and a proper submanifold $X$
 ($\bd{X} \subset \bd{Z}$ and $X$ is transverse to $\bd{Z}$) of codimension $3$.
Let $\rho_{X} \co S(\nu_{X}) \to X$ be the unit sphere bundle associated with the normal bundle
$\nu_{X}$ of $X$ (identified with a tubular neighborhood of $X$), and
$e(F_{X}) \in H^{2}(S(\nu_{X}); \Z)$ the Euler class of the vertical tangent subbundle
$F_{X} \subset TS(\nu_{X})$ of $S(\nu_{X})$ with respect to $\rho_{X}$.
\begin{definition}[\cite{moriyama:emb63}]
  A cohomology class $e \in H^{2}(Z \setminus X ; \Q)$ is called an \emph{$e$-class} of
  $(Z,X)$ if
  $
	e|_{S(\nu_{X})} = e(F_{X})
  $
  over $\Q$.
  The triple $\b = (Z,X,e)$ is called an \emph{$e$-manifold}.
  Set $\dim \b  = \dim Z$.
\end{definition}
  A spin structure of $\b$ will mean a pair of spin structures of $Z$ and $X$.
We call $\b$ a \emph{spin} $e$-manifold if it has a spin structure.
The boundary of $\b$ is defined as $\bd{\b} = (\bd{Z}, \bd{X}, e|_{\bd{Z} \setminus \bd{X}})$,
and the disjoint union of two $e$-manifolds $\b_{i} = (Z_{i}, X_{i}, e_{i})$ ($i=1,2$)
is defined as $\b_{1} \amalg \b_{2} = 
 (Z_{1} \amalg Z_{2}, X_{1} \amalg X_{2}, e_{3})$, where $e_{3}$ is the $e$-class such 
that $e_{3}|_{Z_{i} \setminus X_{i}} = e_{i}$.
We also define $-\b = (-Z,-X,e)$.
We say $\b$ is \emph{closed} if $\bd{\b}$ is the \emph{empty} $e$-manifold $\es = (\es,\es,0)$.
If there exists an isomorphism
$f \co (Z_1, X_{1}) \to (Z_{2}, X_{2})$ of pair of manifolds such that $f^*e_{2} = e_{1}$, then
we say $\b_{1}$ and $\b_{2}$ are \emph{isomorphic} (denoted by $\b_{1} \cong \b_{2}$).
See~\cite[Section~2]{moriyama:emb63} for more details.

In \cite{moriyama:emb63}, we defined the following invariant $\s$ for $6$--dimensional closed
$e$-manifolds.
\begin{theorem}[\cite{moriyama:emb63}]\label{thm:sigma}
  There exists a unique rational--valued invariant $\s(\a)$ for $6$--dimensional closed
  $e$-manifolds $\a$ satisfying the following properties\textup{:}
  \begin{enumerate}[\textup{(}\upshape a\/\itshape\textup{)}]
	  \item\label{axiom1}
		$\s(-\a) = -\s(\a)$, \ $\s(\a \amalg \a') = \s(\a) + \s(\a')$.
	  \item\label{axiom2} For a $7$--dimensional $e$-manifold  $\b = (Z,X,e)$,
		$\s(\bd{\b}) = \Sign X$.
  \end{enumerate}
\end{theorem}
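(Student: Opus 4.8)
The plan is to avoid defining $\s$ by an explicit formula (a de Rham integral of the cube of a propagator, in the spirit of the Kontsevich--Kuperberg--Thurston invariant, is possible but delicate) and instead to define it through bounding, after isolating the one geometric fact that makes this legitimate. That fact is the following lemma: \emph{for a closed $7$--dimensional $e$-manifold $\b = (Z,X,e)$ one has $\Sign X = 0$}. I would prove it as follows. Since $X$ is a closed oriented $4$--manifold, $3\Sign X = \bk{p_{1}(X),[X]}$, and $TZ|_{X} \cong TX \oplus \nu_{X}$ gives $p_{1}(X) = p_{1}(Z)|_{X} - p_{1}(\nu_{X})$ rationally, so it suffices to kill both terms. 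For the first: in the Thom--Gysin sequence $H^{2}(Z \setminus X;\Q) \xrightarrow{\ \d\ } H^{0}(X;\Q) \xrightarrow{\ \i_{!}\ } H^{3}(Z;\Q)$, the map $\i_{!}$ sends $1$ to $\mathrm{PD}_{Z}[X]$, while $\d(e)$ is the fibrewise integral over the $S^{2}$--fibre of $e|_{S(\nu_{X})} = e(F_{X})$, hence a nonzero multiple of $1$ on each component of $X$; exactness then forces $\mathrm{PD}_{Z}[X] = 0$, so $\i_{*}[X] = 0$ in $H_{4}(Z;\Q)$ and $\bk{p_{1}(Z)|_{X},[X]} = \bk{p_{1}(Z),\i_{*}[X]} = 0$. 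For the second: blowing up $X$ yields a compact $Z_{0}$ with $\bd{Z_{0}} = S(\nu_{X})$ over which $e$ extends, so $\bk{e^{3}|_{\bd{Z_{0}}},[\bd{Z_{0}}]} = 0$ since $[\bd{Z_{0}}] = 0$ in $H_{6}(Z_{0};\Q)$; but $e|_{\bd{Z_{0}}} = e(F_{X})$, and the ring structure of the $S^{2}$--bundle $S(\nu_{X})$ (the relation expressing $e(F_{X})^{2}$ through $\rho_{X}^{*}p_{1}(\nu_{X})$, followed by fibre integration) identifies this integral with a nonzero multiple of $\bk{p_{1}(\nu_{X}),[X]}$. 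Hence both Pontryagin numbers vanish and $\Sign X = 0$.

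Granting the lemma, for a closed $6$--dimensional $e$-manifold $\a$ I would choose $n \ge 1$ and a $7$--dimensional $e$-manifold $\b = (Z,X,e)$ with $\bd{\b} \cong n\a$ (the disjoint union of $n$ copies of $\a$), and set $\s(\a) = \frac{1}{n}\Sign X$. Novikov additivity together with the lemma make this independent of all choices: if $n\a \cong \bd{\b_{1}}$ and $m\a \cong \bd{\b_{2}}$, then gluing $m$ copies of $\b_{1}$ to $n$ copies of $-\b_{2}$ along $nm\,\a$ produces a closed $7$--dimensional $e$-manifold whose codimension--$3$ part is $mX_{1} \amalg (-nX_{2})$, so $m\Sign X_{1} = n\Sign X_{2}$. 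Property (a) is then immediate from $\Sign(-X) = -\Sign X$ and additivity of the signature; property (b) holds essentially by construction, a $7$--dimensional $\b$ with $\bd{\b} = \a$ being itself an admissible choice for $\a$ (and the case $\bd{\b} = \es$ is exactly the lemma, $0 = \s(\es) = \Sign X$). Uniqueness then reduces to the assertion that every closed $6$--dimensional $e$-manifold is rationally $e$--nullcobordant, i.e.\ $\Oe \otimes \Q = 0$: granting that, the value $\s(\a)$ is forced, so two invariants satisfying (a) and (b) must agree. (Conversely the theorem cannot hold without this vanishing, as any homomorphism $\Oe \to \Q$ could be added to $\s$.)

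The step I expect to be the main obstacle is this cobordism vanishing. After organizing $e$-manifolds into a bordism theory, forgetting the $e$-class classifies the underlying codimension--$3$ pair by a map to the Thom space $MSO(3)$, and $\widetilde{\O}^{SO}_{6}(MSO(3)) \otimes \Q = 0$ for dimensional reasons ($\widetilde{H}_{*}(MSO(3);\Q)$ is concentrated in degrees $3,7,11,\dots$ and $\O^{SO}_{*} \otimes \Q$ in degrees $0,4,8,\dots$, leaving nothing in total degree $6$). The real point is that remembering the $e$-class contributes nothing rationally here: the set of $e$-classes on a fixed pair is a torsor over classes pulled back from $H^{2}$ of the complement, which a priori might be detected by characteristic numbers such as $\bk{e^{3},[\bar Y]}$ or $\bk{p_{1}(Y)\cup e,[\bar Y]}$ on a blown-up representative; but on the linking $S^{2}$--bundle $S(\nu_{Q})$ over the $3$--manifold $Q$ the relevant Pontryagin classes vanish automatically, and the ensuing $H^{5}(S(\nu_{Q});\Q)$--indeterminacy prevents such numbers from being well defined, so no new rational invariants appear. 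Alongside this there is the routine but fiddly corner-and-transversality bookkeeping in the blow-up and gluing constructions.
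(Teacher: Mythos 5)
Your key lemma, that $\Sign X = 0$ for a \emph{closed} $7$--dimensional $e$-manifold $\b=(Z,X,e)$, is correct, and your proof of it is sound: the Gysin argument showing $\i_*[X]=0$ in $H_4(Z;\Q)$ uses the existence of the $e$-class in exactly the right way, and the Stokes argument on $Z_X$ combined with $e(F_X)^2 = \rho_X^*p_1(\nu_X)$ and fibre integration indeed kills $\bk{p_1(\nu_X),[X]}$. This is essentially the same computation the paper carries out in the appendix (\fullref{sec:yap}), there packaged via the identity $\Sign X = \frac{1}{6}\int_{\HX}p_1(T\HX)e(F_X)-e(F_X)^3$ from \fullref{lem:cu}; your version splitting $p_1(TX)=p_1(TZ)|_X-p_1(\nu_X)$ is an equivalent bookkeeping. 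Likewise, granted the bounding statement, your Novikov-additivity argument for well-definedness and your derivations of (a), (b), and uniqueness are all fine.

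The genuine gap is the step you flag yourself: proving that every closed $6$--dimensional $e$-manifold has a positive multiple that bounds as an $e$-manifold, equivalently $\Oe\otimes\Q=0$. Your argument for this does not work as written. First, showing $\widetilde{\O}^{SO}_6(MSO(3))\otimes\Q=0$ only produces, after multiplying $\a$ by some $n$, a pair $(Z,X)$ with $\bd{(Z,X)}=n(Y,Q)$; it does not produce an $e$-class on $(Z,X)$ extending the given one, and the obstruction to such an extension (a class in $H^3(Z_X,\bd{Z_X};\Q)$) has no reason to vanish. Second, the explanation that the candidate characteristic numbers $\bk{e^3,[\bar Y]}$ and $\bk{p_1(Y)\cup e,[\bar Y]}$ are ``not well defined'' because of ``$H^5(S(\nu_Q);\Q)$--indeterminacy'' is not correct: on a blown-up representative $(W,\es,e)$ these are perfectly well-defined rational numbers (they are exactly the $\chi_1,\chi_2$ of \fullref{sec:BSpin3-KQ2}), and the actual reason no nonzero homomorphism $\Oe\to\Q$ arises is that under $e$-cobordism these numbers shift by a full-rank \emph{integer} lattice (the image of $\up$ computed via $\xi$ in \fullref{lem:chi-xi}), so they descend only to $\Q/\Z$. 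This is precisely the content of \fullref{prop:pi-onto} (every $\a$ is $e$-cobordant to some $(W,\es,e)$) together with the exact sequence of \fullref{prop:exact-O}; neither step is routine, and your sketch replaces them with a heuristic that is incorrect in detail. Until the torsion statement for $\Oe$ is actually established, both the existence of $\s$ on all of $\Oe$ and its uniqueness remain unproved.

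One smaller remark on strategy: the paper's appendix suggests that in \cite{moriyama:emb63} the invariant $\s$ is \emph{defined} by a characteristic-number formula on a blown-up representative (the analogue of $\chi_1$), with well-definedness then proved through the cobordism computation, rather than defined by bounding as you do. Your route, if the cobordism-torsion step were supplied, would be a legitimate alternative, but it is not a shortcut: it needs the same $\Oe\otimes\Q=0$ input, which is where the real work of \fullref{sec:spin-cobordism} lives.
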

This invariant $\s$ is a generalization of Haefliger's invariant~\cite{Haefliger-knot} for
smooth $3$--knots in $S^{6}$ \cite[Theorem 5]{moriyama:emb63}. 

\subsection{Main results}
\label{sec:results}
If a closed spin $e$-manifold $\a$ bounds, namely, 
if there exists a spin $e$-manifold $\b$ such that $\bd{\b} \cong \a$ as a spin $e$-manifold, then
we say $\a$ is \emph{spin null--cobordant}.
We define $\O_{6}^{e,spin}$ to be the cobordism group of $6$--dimensional spin $e$-manifolds,
namely, it is an abelian group consisting of the spin cobordism classes $[\a]$ 
of $6$--dimensional closed spin $e$-manifolds $\a$, with the group structure given by the disjoint sum.  

In \fullref{sec:construction}, for an oriented integral homology $3$--sphere $M$, we construct a $6$--dimensional
closed spin $e$-manifold $\a_{M} = (Y,Q,e_{M})$ such that
$Q \cong M \amalg M \amalg (-M)$.
The following theorem will be used to 
prove the vanishing of the spin cobordism class $[\a_{M}] \in \O_{6}^{e,spin}$ of $\a_{M}$.
\begin{mainthm}\label{main:O}
  There is a unique isomorphism $\Phi \co \Oes \to (\Q/16\Z)\oplus (\Q/4\Z)$ such that
  \begin{equation}
	\Phi([W,\es,e]) \equiv 
	\left( \frac{1}{6}\int_{W}^{}p_{1}(TW) e - e^{3},\ \frac{1}{2}\int_{W}^{}e^{3} \right)
	\mod{16\Z \oplus 4\Z}
	\label{eq:Phi2}
  \end{equation}
  for any closed spin $6$--manifold $W$ and $e \in H^{2}(W;\Q)$.  
\end{mainthm}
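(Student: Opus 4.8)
The plan is to reduce to $e$-manifolds whose submanifold is empty, to check that the right-hand side of \eqref{eq:Phi2} descends to a homomorphism $\Oes\to(\Q/16\Z)\oplus(\Q/4\Z)$, and then to prove surjectivity and injectivity. For the reduction, let $\a=(W,V,e)$ be a closed spin $e$-manifold; then $V$ is a closed spin $3$-manifold, so $[V]=0$ in $\Os_{3}$ and $V=\bd{V'}$ for a compact spin $4$-manifold $V'$. One pushes a copy of $V'$ into the interior of $W\x[0,1]$ so that it meets $W\x\{0\}$ exactly along $V$ and misses $W\x\{1\}$, and extends $e$ over the complement of $V'$ (a relative $H^{2}$-extension problem whose obstruction vanishes over $\Q$, since $\nu_{V'}$ restricts to $\nu_{V}$ along $V$ and $e(F_{V'})|_{S(\nu_{V})}=e(F_{V})$); this gives a spin cobordism from $\a$ to some $(W,\es,e)$. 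Hence the classes $[W,\es,e]$ generate $\Oes$, so \eqref{eq:Phi2} determines $\Phi$ uniquely once it is known to descend.

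To see it descends, note that additivity under disjoint union is clear, and suppose $(W,\es,e)\cong\bd{(Z,X,\tilde e)}$ with $Z$ and $X$ spin; then $X$ is a \emph{closed} spin $4$-manifold. From $TZ|_{X}\cong TX\oplus\nu_{X}$ together with $w_{2}(TZ)|_{X}=0=w_{2}(TX)$ one gets $w_{2}(\nu_{X})=0$, so $\nu_{X}$ is a rank-$3$ spin bundle, hence the adjoint bundle of an $SU(2)$-bundle, and $\int_{X}p_{1}(\nu_{X})\in 4\Z$. Since $\rho_{X}^{*}\nu_{X}\cong\underline{\mathbb{R}}\oplus F_{X}$, one has $e(F_{X})^{2}=\rho_{X}^{*}p_{1}(\nu_{X})$ and $\rho_{X*}e(F_{X})=2$, so Stokes' theorem on $Z$ with a tubular neighbourhood of $X$ removed (all relevant forms being closed there) gives $\int_{W}e^{3}=2\int_{X}p_{1}(\nu_{X})$ and $\int_{W}p_{1}(TW)e=6\Sign X+2\int_{X}p_{1}(\nu_{X})$. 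Thus the first coordinate of \eqref{eq:Phi2} equals $\Sign X$, which is divisible by $16$ by Rokhlin's theorem, and the second equals $\int_{X}p_{1}(\nu_{X})\in 4\Z$; so $\Phi$ is a well-defined homomorphism. (By the uniqueness clause of \fullref{thm:sigma} the first coordinate is also $\s(W,\es,e)$, which gives another way to see its reduction modulo $16$ is a cobordism invariant.)

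For surjectivity one evaluates \eqref{eq:Phi2} on two families of examples. With $\CP^{3}$ and $e=th$ ($h$ the hyperplane class, $t\in\Q$), one has $p_{1}(T\CP^{3})=4h^{2}$ and $\int h^{3}=1$, so $\Phi=(\tfrac16(4t-t^{3}),\tfrac12t^{3})$; with $(\CP^{1})^{3}$ and $e$ any rational combination of the three $H^{2}$-generators, $p_{1}=0$ and $\int e^{3}=6q$ for $q\in\Q$ arbitrary, so $\Phi=(-q,3q)$. Subtracting the $(\CP^{1})^{3}$-class with $q=t^{3}/6$ from the $\CP^{3}$-class leaves $(\tfrac23t,0)$, which as $t$ varies over $\Q$ fills the first summand; the $(\CP^{1})^{3}$-classes then fill the second.

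The step I expect to be the main obstacle is injectivity. Suppose $(W,\es,e)$ has $\Phi=0$. Since $\Os_{6}=0$, $W=\bd{Z_{0}}$ for a compact spin $7$-manifold $Z_{0}$, and the obstruction to extending $e$ over $Z_{0}$ lies in $H^{3}(Z_{0},W;\Q)\cong H_{4}(Z_{0};\Q)$; after clearing denominators one realizes its Poincar\'e--Lefschetz dual by a closed oriented $4$-submanifold $X_{0}\subset\Int Z_{0}$, over whose complement $e$ extends to some $\tilde e\in H^{2}(Z_{0}\setminus X_{0};\Q)$. The delicate remaining task is to arrange, by modifying $X_{0}$ within its bordism class in $Z_{0}$ and altering the spin filling $Z_{0}$, that $X_{0}$ and $\nu_{X_{0}}$ are spin and $\tilde e|_{S(\nu_{X_{0}})}=e(F_{X_{0}})$: such changes shift $\Sign X_{0}$ by multiples of $16$ (using that $\Os_{4}\cong\Z$ is detected by the signature, which is divisible by $16$ on spin $4$-manifolds) and shift $\int_{X_{0}}p_{1}(\nu_{X_{0}})$ by multiples of $4$, while $\Os_{3}=\Os_{5}=\Os_{6}=\Os_{7}=0$ dispose of the other ambiguities. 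The two corrections that must be made are governed precisely by $\s(W,\es,e)\in 16\Z$ and $\tfrac12\int_{W}e^{3}\in 4\Z$, i.e.\ by the hypothesis $\Phi=0$, so they can be carried out; this yields a spin $e$-manifold $(Z_{0},X_{0},\tilde e)$ with boundary $(W,\es,e)$, whence $[W,\es,e]=0$. Therefore $\Phi$ is injective, and being also a surjective homomorphism it is the desired isomorphism.
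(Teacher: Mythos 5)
Your proposal takes a genuinely different route from the paper: where the paper computes $\Os_{6}(K(\Q,2))\cong\Q^{\oplus2}$ and $\Os_{4}(BSpin(3))\cong\Z^{\oplus2}$ via the Atiyah--Hirzebruch spectral sequence, shows that $\chi$ and $\xi$ are isomorphisms, and then deduces both the exactness of $0\to\Os_{4}(BSpin(3))\to\Os_{6}(K(\Q,2))\to\Oes\to 0$ and the bijectivity of $\Phi$ from a commutative diagram, you aim to establish well-definedness, surjectivity and injectivity directly by geometric constructions. The reduction to classes of the form $[W,\es,e]$ and the well-definedness calculation via Stokes (recovering $\Sign X$ and $\int_X p_1(\nu_X)$ from the two integrals) are correct and parallel the paper's \fullref{prop:pi-onto} and \fullref{lem:Phi}. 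Your surjectivity argument via $\CP^3$ and $(\CP^1)^3$ with variable rational classes is a nice, entirely elementary substitute for the paper's appeal to the AHSS together with the generators $[K3,K3\times\R^3]$ and $[S^4,E]$.

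The genuine gap is in the injectivity step, and you yourself flag it. You reduce to: given $W=\bd{Z_0}$ spin and $e\in H^2(W;\Q)$ with $\Phi=0$, produce $X_0\subset\Int Z_0$ and $\tilde e$ making $(Z_0,X_0,\tilde e)$ a spin $e$-manifold with boundary $(W,\es,e)$. The sentence ``the two corrections that must be made are governed precisely by $\s(W,\es,e)\in16\Z$ and $\tfrac12\int_W e^3\in4\Z$, \dots\ so they can be carried out'' is an assertion, not a proof. Concretely, you have not shown: (i) that after ``clearing denominators'' the removal of $X_0$ actually kills the extension obstruction for the \emph{rational} class $e$ (realizing a multiple of the Lefschetz dual does not by itself make $e$ extend over the complement with the normalization $\tilde e|_{\HX_0}=e(F_{X_0})$); (ii) that one can modify $X_0$ inside $Z_0$ to make both $X_0$ and $\nu_{X_0}$ spin while keeping control of the $e$-class; and (iii) that the two numerical discrepancies are independently realizable by such modifications so that the hypothesis $\Phi=0$ is exactly the condition that lets you cancel them. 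This is precisely the content that the paper buys for free from the AHSS computations (which force $\xi$ and $\chi$ to be isomorphisms onto $16\Z\oplus4\Z$ and $\Q^{\oplus2}$, whence $\Ker\pi=\Image\up$ and $\Phi$ injective). Without some replacement for that input, the injectivity half of your argument does not close.

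A smaller issue worth noting: in the reduction step you write that pushing a Seifert $4$-manifold $V'$ of $V$ into $W\times[0,1]$ gives a cobordism from $\a$ to ``some $(W,\es,e)$''---the terminal $e$-class is generally not the original $e$, and more importantly one must justify that $V'$ embeds in $W\times[0,1]$ with prescribed boundary $V\subset W\times\{0\}$ (dimension $4$ in dimension $7$ is the borderline $n\le 2m-3$ case, so this is not automatic from general position). The paper's \fullref{prop:pi-onto} sidesteps this by gluing $X\times D^3$ onto $I\times W$ along $(V\times D^3)_0$ rather than embedding a filling surface.
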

Here, $p_{1}(TW)$ is the first Pontryagin class of the tangent bundle $TW$ of $W$.
Any element in $\Oes$ is represented by a closed spin $e$-manifold of the form $(W,\es,e)$
(\fullref{prop:pi-onto}), and that is why $\Phi$ is uniquely determined by \eqref{eq:Phi2}. 
\begin{mainthm}
  \label{main:spin-cob}
  For an oriented integral homology $3$--sphere $M$, the $6$--dimensional closed spin $e$-manifold
  $\a_{M}$ satisfies the following properties.
  \begin{enumerate}
	\item\label{item:reverse} $\a_{-M} \cong -\a_{M}$.
	\item\label{item:spin-null} $[\a_{M}] = 0$ in $\O_{6}^{e,spin}$.
  \end{enumerate}
\end{mainthm}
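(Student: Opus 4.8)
The two parts are of quite different natures. The first is a formal consequence of the naturality of the construction of \fullref{sec:construction}, while the second is the substantial one and goes through the classification of $\Oes$ provided by \fullref{main:O}. For the first part, I would check that $M \mapsto \a_{M} = (Y,Q,e_{M})$ is natural under orientation reversal. The pair $(Y,Q)$ is assembled out of the compactified two--point configuration space of $M \setminus \{pt\}$ together with the Euler--class data of its diagonal and boundary strata, and nothing in this uses more than the oriented diffeomorphism type of $M$; reversing the orientation of $M$ reverses that of $M \setminus \{pt\}$, hence of each configuration--space factor, so the output is $(-Y,-Q)$ as an oriented manifold pair, the three components $M \amalg M \amalg (-M)$ of $Q$ being carried to $(-M) \amalg (-M) \amalg M = -(M \amalg M \amalg (-M))$. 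Since $e_{-M}$ is pulled back from the same universal vertical Euler class that defines $e_{M}$, it corresponds to $e_{M}$ under the identification $(Y_{-M},Q_{-M}) \cong (-Y_{M},-Q_{M})$, and by the definition of $-\b$ for $e$-manifolds this is exactly $\a_{-M} \cong -\a_{M}$.

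For the second part it suffices, by \fullref{main:O}, to show $\Phi([\a_{M}]) = (0,0)$, that is, that the two characteristic numbers attached to $[\a_{M}]$ lie in $16\Z$ and $4\Z$. First I would replace $\a_{M}$, within its spin cobordism class, by a representative with empty submanifold: since $\Os_{3}=0$, pick a compact spin $4$--manifold $P$ with $\partial P \cong Q$, and form the compact spin $7$--manifold $N$ obtained from $Y \x [0,1]$ by attaching $P \x D^{3}$ along $\partial P \x D^{3} \cong Q \x D^{3} \subset Y \x \{1\}$ (the normal bundle of $Q$ in $Y$ is trivial, since oriented $3$--manifolds are parallelisable). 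Then $\partial N \cong Y \amalg W$ with $W$ closed, and $N$ contains a properly embedded codimension--$3$ spin submanifold $X_{N} \cong P$ with $\partial X_{N} = Q$ on the $Y$--end and $X_{N} \cap W = \es$. The one delicate point is extending $e_{M}$ to an $e$-class $f$ on $N \setminus X_{N}$: the obstruction sits in a relative version of the Gysin sequence of the normal sphere bundle of $X_{N}$, and, because $e_{M}$ is already given on the $Y$--side, it can be killed after stabilising $P$ by spin connected sums with copies of $S^{2} \x S^{2}$ (which do not change $\partial P$). This produces a spin $e$-cobordism $(N,X_{N},f)$ from $\a_{M}$ to $(W,\es,f|_{W})$, so $[\a_{M}] = [W,\es,f|_{W}]$ and \fullref{main:O} applies to the latter. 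Applying Stokes' theorem on $N \setminus X_{N}$, and using that the vertical Euler class of the normal $S^{2}$--bundle of $X_{N}$ squares to zero (the bundle being trivial), one rewrites
\[
\Phi([\a_{M}]) \equiv \Bigl( \tfrac{1}{6}\textstyle\int_{Y}\bigl(p_{1}(TY)e_{M} - e_{M}^{3}\bigr) - \Sign P,\ \tfrac{1}{2}\textstyle\int_{Y} e_{M}^{3} \Bigr) \pmod{16\Z \oplus 4\Z},
\]
where $\int_{Y}e_{M}^{3}$ and $\int_{Y}p_{1}(TY)e_{M}$ denote the corresponding characteristic numbers of the closed $e$-manifold $\a_{M}$, defined via an extension of $e_{M}$ across a tubular neighbourhood of $Q$ together with the boundary correction at $Q$; and by Rokhlin's theorem $\Sign P \equiv \mu(\partial P) = \mu(Q) = \mu(M) \pmod{16}$.

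It then remains to evaluate $\int_{Y}e_{M}^{3}$ and $\int_{Y}p_{1}(TY)e_{M}$, and this is where I expect the real difficulty, and where the $\Z/2$--actions on $C_{2}(M \setminus \{pt\})$ are used in an essential way. The class $e_{M}$, hence the two integrands, is built to be compatible with the natural involutions on the configuration space --- in particular the point--switching one --- so each integral decomposes into a part pulled back from the corresponding quotient and a part localised along the diagonal and boundary strata, which are $S^{2}$--bundles over the parallelisable $3$--manifold $M$. Keeping track of these two parts, using $\langle e(TS^{2}),[S^{2}] \rangle = 2$ and $H^{4}(M;\Q) = 0$, should show on the one hand that $\tfrac{1}{2}\int_{Y}e_{M}^{3} \in 4\Z$, and on the other hand that $\tfrac{1}{6}\int_{Y}\bigl(p_{1}(TY)e_{M} - e_{M}^{3}\bigr) \equiv \mu(M) \pmod{16}$, so that the $-\Sign P$ correction from the reduction step cancels it. Combining, $\Phi([\a_{M}]) = (0,0)$ and $[\a_{M}] = 0$ in $\Oes$.

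Everything except this last computation --- the naturality of the construction, the passage to a representative with empty submanifold, and the cobordism bookkeeping --- is routine; the crux, and the reason the $\Z/2$--symmetry of $C_{2}(M \setminus \{pt\})$ must be exploited, is precisely in showing that these two characteristic numbers land in $16\Z \oplus 4\Z$.
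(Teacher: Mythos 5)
Your part~\eqref{item:reverse} has a real gap: the naturality argument as stated gives the wrong orientation sign on $Y$. Reversing the orientation of $M$ reverses it in \emph{both} factors of $M \x M$, so $(-M)\x(-M) = M\x M$ as an oriented manifold and hence $Y(-M) = Y(M) = Y$, not $-Y$; similarly $Q(-M)=-Q$, so functoriality alone gives $\a_{-M}=(Y,-Q,-e_M)$, which is not $-\a_M=(-Y,-Q,e_M)$. To pass from one to the other one needs the orientation--reversing involution $\iota$ that swaps the two $M$--factors and the two $S^3$--factors; this $\iota$ is an isomorphism $(-Y,-Q)\to(Y,-Q)$ with $\iota^*(-e_M)=e_M$, which is precisely the content of \fullref{lem:ie}. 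The point--switching involution is not a side remark here --- it is what makes~\eqref{item:reverse} true.

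For part~\eqref{item:spin-null}, your reduction to a closed representative $(W',\es,e')$ with empty submanifold, and then to the two characteristic numbers via $\Phi$, is on the right track (it is essentially \fullref{prop:pi-onto} followed by \fullref{prop:W32}). But the step you yourself flag as the crux --- proving those two numbers lie in $16\Z\oplus4\Z$ --- is left at ``should show'', and the route you sketch will not work as stated. The free $G$--action on $Y'$ \emph{reverses} orientation and sends $e_M'$ to $-e_M'$ while fixing $p_1(TY')$, so both $6$--forms $p_1(TY')e_M'-e_M'^3$ and $e_M'^3$ pull back under $\iota$ to their negatives, and the integrals over $Y'$ acquire no constraint from an averaging or quotient decomposition; moreover the target congruence $\tfrac16\int(p_1 e-e^3)\equiv\mu(M)\pmod{16}$ is exactly as hard as the statement you are trying to prove. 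The mechanism the paper actually uses is different in kind: one first shows (\fullref{prop:W32}, using $p_1(TY')|_W = p_1(TW)+e_M'^2$ together with the evenness of $e_M'$ from \fullref{lem:mod4}) that $\Phi([\a_M])\equiv(\tfrac12\Sign W,\,0)$ for any $4$--submanifold $W\subset Y'$ Poincar\'e dual to $e_M'$, reducing the problem to $\Sign W\equiv 0\pmod{32}$. One then \emph{produces} a $G$--invariant such $W$ by constructing a $G$--equivariant oriented rank--$2$ bundle $F$ over $Y'$ with $e(F)=e_M'$ and $w_i(F/G)=w_i(TY'/G)$ for $i=1,2$ (\fullref{prop:F}; this is an equivariant obstruction--theory argument with target $\CP^3$ carrying the quaternionic $j$--involution, carried out in \fullref{sec:vector-bundle}). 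Taking $W$ to be the zero set of a $G$--equivariant transverse section makes $W/G$ oriented and spin, and Rokhlin's theorem applied to $W/G$ gives $\Sign W = \pm2\Sign(W/G)\equiv 0\pmod{32}$. That step --- a spin $4$--manifold arising as a free $\Z/2$--quotient, fed into Rokhlin --- is the essential idea, and it is not present in your sketch.
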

As a corollary of \fullref{thm:sigma} and \fullref{main:spin-cob},
we obtain a new proof of the vanishing property \eqref{eq:vanish} of the Rokhlin invariant for integral homology
$3$--spheres. 
\begin{maincor}[\cite{akbulut-casson}, \cite{walker} for $\Z/2$--homology $3$--spheres]\label{maincor}
  If an oriented integral homology $3$--sphere $M$ is amphichiral, then
  $\mu(M) = 0$.
\end{maincor}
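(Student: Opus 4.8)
The plan is to assemble the pieces already in place: the corollary is a purely formal consequence of \fullref{thm:sigma} and \fullref{main:spin-cob}. Let $M$ be an oriented integral homology $3$--sphere with $M \cong -M$, and let $\a_{M} = (Y,Q,e_{M})$ be the $6$--dimensional closed spin $e$-manifold of \fullref{sec:construction}, so that $Q \cong M \amalg M \amalg (-M)$. First I would use amphichirality together with part~\ref{item:reverse} of \fullref{main:spin-cob}: since $\a_{-M} \cong -\a_{M}$, and $M \cong -M$ forces $\a_{M} \cong \a_{-M}$, we obtain $\a_{M} \cong -\a_{M}$.

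Next I would feed this into the invariant $\s$ of \fullref{thm:sigma}. As $\s$ depends only on the isomorphism type of a closed $e$-manifold, $\a_{M} \cong -\a_{M}$ gives $\s(\a_{M}) = \s(-\a_{M})$; combined with property~(a), namely $\s(-\a_{M}) = -\s(\a_{M})$, this forces $2\s(\a_{M}) = 0$, hence $\s(\a_{M}) = 0$.

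Then I would invoke part~\ref{item:spin-null} of \fullref{main:spin-cob}: since $[\a_{M}] = 0$ in $\Oes$, there is a $7$--dimensional spin $e$-manifold $\b = (Z,X,e)$ with $\bd{\b} \cong \a_{M}$ as a spin $e$-manifold, and property~(b) of \fullref{thm:sigma} then gives $\Sign X = \s(\bd{\b}) = \s(\a_{M}) = 0$. It remains only to identify $\Sign X \bmod 16$ with $\mu(M)$. The $4$--manifold $X$ is spin, with $\bd{X} \cong M \amalg M \amalg (-M)$ carrying the spin structure restricted from $X$; since $M$ is a $\Z/2$--homology sphere this is its unique spin structure, so by the definition of the Rokhlin invariant $\Sign X \equiv \mu(\bd{X}) \pmod{16}$. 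By additivity of $\mu$ under disjoint union and the relation $\mu(-M) \equiv -\mu(M) \pmod{16}$ we have $\mu(\bd{X}) = \mu(M) + \mu(M) + \mu(-M) \equiv \mu(M) \pmod{16}$. Therefore $\mu(M) \equiv \Sign X = 0 \pmod{16}$, i.e.\ $\mu(M) = 0$.

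There is essentially no obstacle in this last argument itself; the only care needed is the bookkeeping of spin structures, so that $X$ genuinely computes the Rokhlin invariant of its boundary, and the elementary additivity and orientation behaviour of $\mu$. All of the genuine difficulty has been pushed earlier: into the construction of $\a_{M}$ in \fullref{sec:construction}, and into \fullref{main:O} and \fullref{main:spin-cob}, which compute $\Oes$ and exhibit the spin null--cobordism of $\a_{M}$.
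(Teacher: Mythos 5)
Your proof is correct and follows essentially the same route as the paper: both reduce the corollary to Theorem~\ref{thm:sigma} and \fullref{main:spin-cob} in exactly the same way, deducing $\s(\a_M)=0$ from $\a_M \cong -\a_M$ and then producing a spin null--cobordism $\b=(Z,X,e)$ with $\Sign X = \s(\a_M) = 0$. The only divergence is in the final bookkeeping step: the paper glues the boundary components $M_2$ and $M_3$ of $X$ by a diffeomorphism to produce a spin $4$--manifold $X'$ with $\bd X' \cong M$ and $\Sign X' = \Sign X$ (Novikov additivity), whereas you instead extend $\mu$ to closed spin $3$--manifolds, use additivity under disjoint union and $\mu(-M)=-\mu(M)$ to compute $\mu(M \amalg M \amalg (-M)) \equiv \mu(M)$. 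Both are standard and equivalent; this is a cosmetic rather than substantive difference.
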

\begin{proof}
  Assume $M \cong -M$.
  \fullref{main:spin-cob}~\eqref{item:reverse} and \fullref{thm:sigma}~\eqref{axiom1}
  implies $\s(\a_{M}) = 0$.
  By \fullref{main:spin-cob}~\eqref{item:spin-null}, there exists a $7$--dimensional spin
  $e$-manifold $\b = (Z,X,e)$ such that $\bd{\b} \cong \a_{M}$, and in particular, 
  we have $\s(\a_{M}) = \Sign X$ by \fullref{thm:sigma}~\eqref{axiom2}.
  The manifold $X$ is spin and $\bd{X} \cong Q$.
  Let us write $Q = M_{1} \cup M_{2} \cup (-M_{3})$, $M_{i} \cong M$.  
  Gluing the boundary components $M_{2}$ and $M_{3}$ of $X$ by a diffeomorphism, 
  we obtain a compact oriented spin $4$--manifold
  $X'$ such that $\bd{X'} = M_{1} \cong M$ and $\Sign X' = \Sign X$.  
  By the definition of the Rokhlin invariant, we have
  \[
  \mu(M)\hspace{-.4cm}
  \underset{\pmod{16}}{\equiv}\hspace{-.4cm}
  \Sign X' = \Sign X = \s(\a_{M}) =  0 
  \proved
  \]
\end{proof}
\begin{remark}
  \label{rem:yap}
  Yet another direct proof of \fullref{maincor} is given in \fullref{sec:yap},
  this is a shortcut to \fullref{maincor} without using \fullref{thm:sigma}.
  It follows from the properties of $\s$ that,
  if a $7$--dimensional $e$--manifold $\b = (Z,X,e)$ if closed, then
  $\Sign X = 0$.
  We can also prove this directly by using Stokes' theorem, 
  and this method is enough to prove \fullref{maincor}.
  The proof given in \fullref{sec:yap} uses only \fullref{main:spin-cob} and Stokes' theorem.
\end{remark}


\subsection{Plan of the paper}
\label{sec:plan}
Here is the plan of the paper.

\rk{Preliminaries}
In \fullref{sec:notation}, 
we introduce notation and conventions. 
In \fullref{sec:construction},
we construct a $6$--dimensional closed spin $e$-manifold
$\a_{M} = (Y,Q,e_{M})$  such that
$Y \cong (M \x M)\#(-S^{3} \x S^{3})$ and $Q \cong M \amalg M \amalg (-M)$.

\rk{An involution}
Let $G = \left\{ 1,\i \right\}$ denote a multiplicative group of order $2$.
In \fullref{sec:involution}, we define a $G$--action on $(Y,Q)$  by using the permutation of
coordinates on $M \x M$ and $S^{3} \x S^{3}$.
We can regard $\i$ as an isomorphism between $-\a_{M}$ and $\a_{-M}$ (preserving the
orientation), namely, \fullref{main:spin-cob}~\eqref{item:reverse} holds.

\rk{Spin cobordism group of $e$-manifolds}
In \fullref{sec:spin-cobordism}, we prove \fullref{main:O},
more precisely, we give a short exact sequence 
\begin{equation*}
  0 \to \O_{4}^{spin}(BSpin(3)) \to \O_{6}^{spin}(K(\Q,2)) \to \O_{6}^{e,spin} \to 0,
\end{equation*}
which is isomorphic to
$0 \to 16\Z \oplus 4\Z \hookrightarrow \Q \oplus \Q \to (\Q/16\Z)\oplus(\Q/4\Z) \to 0$.
Here, $\O_{*}^{spin}$  denotes the spin cobordism group.
A pair $(W,e)$ of a closed spin $6$--manifold $W$ and $e \in H^{2}(W;\Q)$ represents an
element $[W,e] \in \O_{6}^{spin}(K(\Q,2))$, and the isomorphism (\fullref{lem:chi-xi})
\begin{equation*}
 \O_{6}^{spin}(K(\Q,2)) \to \Q \oplus \Q,\qquad
   [W,e] \mapsto
 \left( 
   \frac{1}{6} \int_{W}^{} p_{1}(TW)e - e^{3},\ 
 \frac{1}{2} \int_{W}^{} e^{3}
   \right)
\end{equation*}
induces the definition of $\Phi$.

\rk{Signature modulo $32$}
\label{sub:spin-cob}
In \fullref{sec:signature32},
we construct a certain closed spin $e$-manifold of the form
$
 \a_{M}' = (Y', \es, e_{M}')
$
such that $[\a_{M}'] = [\a_{M}]$ in $\O_{6}^{e,spin}$, and that $Y'$ has an orientation
reversing free $G$--action.
We show that, if $e_{M}'$ is the Poincar\'e dual of a $4$--submanifold $W$ of $Y'$, then the following
equivalence relation holds (\fullref{prop:W32}):
\begin{equation*}
  [\a_{M}]=0\ \text{(\fullref{main:spin-cob}~\eqref{item:spin-null})}
  \quad \Longleftrightarrow \quad \Sign W \equiv 0 \pmod{32}
\end{equation*}

\rk{$G$--vector bundle}
In \fullref{sec:congruence}, we prove
\fullref{main:spin-cob}~\eqref{item:spin-null}, by  constructing such a $W$.
This is done by assuming
the existence of an oriented vector bundle $F$, of rank $2$ over $Y'$ with a
$G$--action, such that
\begin{enumerate}[(i)]
  \item\label{item:a1} $e(F) = e_{M}'$ over $\Q$,
  \item\label{item:a2} $w_{i}(F/G) = w_{i}(TY'/G)$ in $H^{i}(Y'/G;\Z/2)$ for $i=1,2$,
\end{enumerate}
where $w_{i}$ denotes the $i$--th Stiefel--Whitney class.
Fix a $G$--equivariant smooth section $s \co Y' \to F$, and define
$W = \left\{ x \in Y' \ | \ s(x) = 0 \right\}$.
Then, the Poincar\'e dual of $W$ is $e_{M}'$ by \eqref{item:a1}.
The second property \eqref{item:a2} implies that the quotient $W/G$ is orientable and spinnable
smooth manifold.
By Rokhlin's theorem, we have $\Sign W = \pm 2 \Sign W/G \equiv 0 \pmod{32}$.
Hence, \fullref{main:spin-cob}~\eqref{item:spin-null} holds.
In \fullref{sec:vector-bundle}, we prove the existence of $F$ satisfying 
\eqref{item:a1} and \eqref{item:a2}.

In \fullref{sec:yap}, we give yet another direct proof of \fullref{maincor}.

\subsection{Remarks}
\label{remarks}

\begin{remark}
  \label{rem:direct}
The Casson invariant $\l(M)$ is roughly defined by measuring the oriented number of
irreducible representations of the fundamental group $\pi_{1}(M)$ in $SU(2)$, 
and so the geometric meaning is different from $\mu(M)$.
The relation \eqref{eq:l2} is proved by showing that the Dehn surgery formula for
$\l(M) \pmod{2}$ coincides with that of $\mu(M)$.
On the other hand, 
our proof does not require such formulas (or the fact that the Casson invariant is a
finite type invariant) in any step including the proof of \fullref{thm:sigma}.
Moreover, in this paper, we only need to consider the signature of $4$--manifolds or
the related characteristic classes to prove \fullref{maincor}.
Therefore, we might say that our proof is more direct.
\end{remark}
\begin{remark}
  The idea of the construction of $\a_{M}$ comes from the definition of the
  Kontsevich--Kuperberg--Thurston invariant $Z_{KKT}(M)$ for oriented rational homology
  $3$--spheres~\cite{kuperberg-thurston}~\cite{kontsevich-feynmann-diagram}, which is a
  universal real finite type invariant for integral homology spheres in the sense of 
  Ohtsuki~\cite{ohtsuki}, Habiro~\cite{habiro}, and 
  Goussarov~\cite{garoufalidis-goussarov-polyak}.
  A detailed review and an elementary proof for the invariance of $Z_{KKT}$
  is given by Lescop~\cite{lescop-kkt-construction}.
  The  degree one part $Z_{1}(M)$ of $Z_{KKT}(M)$ is equal to $\l_{W}(M)/4$
  (first proved by Kuperberg--Thurston~\cite{kuperberg-thurston} for integral homology $3$--spheres, 
  and later Lescop~\cite{lescop-split} extended this relation to all rational homology $3$--spheres).  
  By definition, $Z_{1}(M)$ is described as an integral over the configuration space
  $\operatorname{Conf}_{2}(M') = M' \x M' \setminus M_{\D}'$ of two points on $M' = M \setminus \left\{ x_{0}
  \right\}$, where $M_{\D}' \subset M' \x M'$ is the diagonal submanifold.
\end{remark}
\begin{remark}
  By the construction of $(Y,Q)$ (\fullref{sec:construction}),
  the complement $Y \setminus Q$ is nothing but the union of the two configuration spaces 
  $\operatorname{Conf}_{2}(M')$ and $-\operatorname{Conf}_{2}(\R^{3})$, and 
  the $G$--action on $Y \setminus Q$ corresponds to the permutation of coordinates on the
  configuration spaces.
  To be brief, the invariant $\s(\a_{M})$ measures the difference between the manifolds 
  $\operatorname{Conf}_{2}(M')$ and $\operatorname{Conf}_{2}(\R^{3})$ (equipped with some
  second cohomology classes) by using the signature of $4$--manifolds.
\end{remark}
\begin{remark}
  If $M$ is an oriented rational homology $3$--sphere, then we can define a
  $6$--dimensional closed $e$-manifold $\a_{M} = (Y,Q,e_{M})$ in exactly the same way as for
  integral homology $3$--spheres.   
  The isomorphism class of $\a_{M}$ is a topological invariant of $M$ (this can be proved in
  the same way as the proof of \fullref{prop:top-inv}), and therefore, the rational
  number $\s(\a_{M}) \in \Q$ is a topological invariant of $M$.   
  In a future paper\footnote{T. Moriyama, \textit{Casson--Walker invariant the signature of
  spin $4$--manifolds}, in preparation}, we will prove that $\s(\a_{M})$ is equal to the
  Casson--Walker invariant $\l_{W}(M)$ up to multiplication by a constant. 
\end{remark}

\textit{Acknowledgments.}
  I would like to thank Professor Mikio Furuta, Toshitake Kohno, and Christine Lescop for their
  advice and support.

\section{Notation}
\label{sec:notation}

We follow the notation introduced in~\cite{moriyama:emb63}.
All manifolds are assumed to be compact, smooth, and oriented unless otherwise stated, and
we use the ``outward normal first'' convention for boundary orientation of manifolds.

For an oriented real vector bundle $E$ of rank $3$ over a manifold
$X$, we denote the associated unit sphere bundle by
$\rho_{E} \co S(E) \to X$,
and let $F_{E} \subset TS(E)$ denote the vertical tangent subbundle of $S(E)$ with respect to $\rho_{E}$.
The orientations of $F_{E}$ and $S(E)$ are given by the isomorphisms
$\rho_{E}^*E \cong \R_{E} \oplus F_{E}$ and $TS(E) \cong \rho_{E}^*TX \oplus
F_{E}$,
where $\R_{E} \subset \rho_{E}^*E$ is the tautological real line bundle of $E$ over $S(E)$.
Consequently, the Euler class
\[
e(F_{E}) \in H^{2}(S(E);\Z)
\]
of $F_{E}$ is defined.

Next, let $(Z,X)$, $Z \supset X$, be a pair of manifolds, and
we assume that $X$ is properly embedded in $Z$ and the codimension is $3$.
Throughout this paper, we always impose these assumptions for all pairs of manifolds.
Denote by $\nu_{X}$ the normal bundle of $X$, which can be identified with a tubular
neighborhood of $X$ so that $X \subset \nu_{X} \subset Z$.
For simplicity, we write
\begin{equation*}
  \HX = S(\nu_{X}), \qquad
  \rho_{X} = \rho_{\nu_{X}} \co \HX \to X, \qquad
  F_{X} = F_{\nu_{X}},\qquad
  Z_{X} = Z \setminus U_{X},
\end{equation*}
where $U_{X}$ is the total space of the open unit disk bundle of $\nu_{X}$.

If we denote by $(W,V) = \bd{(Z,X)}$ the boundary pair of $(Z,X)$, then
we can define $\nu_{V}$, $F_{V}$, $\HV$, $\rho_{V}$, $W_{V}$,
etc.~in exactly the same way as above, and we have
$\bd{\HX} = \HV$ and $e(F_{X})|_{\HV} = e(F_{V})$.

In line with our orientation conventions,
if $\dim Z = 7$ (and so $\dim W = 6$), then
the oriented boundaries of $Z_{X}$ and $W_{V}$ are given as follows:
\begin{equation*}
  \bd{Z_{X}} = W_{V} \cup (- \HX),\qquad
  \bd{W_{V}} = \HV
\end{equation*}
Note that $Z_{X}$ have the corner $\HV$ which is empty when $X$ is closed.
By definition, $e \in H^{2}(Z \setminus X; \Q)$ is an $e$-class $(Z,X)$ if, and only if, 
$e|_{\HX} = e(F_{X})$ over $\Q$.
See \cite{moriyama:emb63} for more detailed description.

\section{\texorpdfstring{Construction of $\a_{M}$}{Construction of aM}}
\label{sec:construction}

Let $M$ be an oriented integral homology $3$--sphere.
In this section, we give a precise construction of the $e$-manifold $\a_{M} = (Y,Q,e_{M})$.

Identify the $3$-sphere $S^{3}$ with the
one-point compactification $\R^{3} \amalg \left\{ \infty \right\}$
of the Euclidean $3$--space $\R^{3}$ by adding one point $\infty$ at
infinity.
We can regard $\R^{3} \x \R^{3}$ as an open submanifold of $S^{3} \x S^{3}$ such that
$S^{3} \x S^{3} = (\R^{3} \x \R^{3}) \amalg (S^{3}_{1} \cup S^{3}_{2})$, where
\begin{equation}
  \label{eq:bouquet}
  S^{3}_{1} = S^{3} \x \left\{ \infty \right\},\quad
  S^{3}_{2} = \left\{ \infty \right\} \x S^{3}.
\end{equation}


Fix a base point $x_{0} \in M$ and 
a smooth oriented local coordinates $\vp \co U \to \R^{3}$ such that $\vp(x_{0}) = 0$.
We shall assume that $U$ is sufficiently small, so that,
for any such a local coordinates $\vp' \co U' \to \R^{3}$, there exists an orientation
preserving  smooth diffeomorphism $h \co M \to M$ such that $h(U) = U'$ and $\vp'
h|_{U} = \vp \co U \to \R^{3}$. 
Set $P = \left\{ (x_{0}, x_{0}) \right\}$.
We define
\begin{align*}
  Y &= 
  \left( M \x M \setminus P \right)
  \cup_{g_{\vp}}
  \left( S^{3} \x S^{3} \setminus \left\{  (0,0)\right\} \right)
\end{align*}
to be the oriented closed $6$-manifold obtained by gluing $U \x U \setminus P$ and $\R^{3} \x
\R^{3} \setminus \left\{ (0,0) \right\}$   
by using the gluing map 
$g_{\vp} \co U \x U \setminus P \to \R^{3} \x \R^{3} \setminus \left\{ (0,0) \right\}$ defined
by
\begin{equation}
  g_{\vp}(x,y) =
  \frac{(\vp(x), \vp(y))}{\norm{(\vp(y), \vp(y))}^{2}},
  \quad
  (x,y) \in U \x U \setminus P,
  \label{eq:gluing-map}
\end{equation}
where $\norm{\ }$ is the standard norm of $\R^{3} \x \R^{3} = \R^{6}$.
By definition, $Y \cong (M \x M) \# ( -S^{3} \x S^{3})$.
\begin{remark}
  We have to remember that we use $g_{\vp}$ to perform the gluing, so that we can define an
  involution on $Y$ in \fullref{sec:involution}.
\end{remark}
We can regard $M \x M \setminus P$ and $-S^{3} \x S^{3}
\setminus \left\{ (0,0) \right\}$ as open submanifolds of $Y$.
The closure of $M \x M \setminus P$ in $Y$ is $Y$ itself,
and so this procedure to obtain $Y$ from $M \x M$ is a kind of blow--up that 
replaces one point $P$ to the bouquet $S^{3}_{1} \cup S^{3}_{2}$, where note that $S^{3}_{1} \cap
S^{3}_{2} = \left\{ (\infty,\infty) \right\}$.

We have the following three $3$--submanifolds $M_{i}'$ ($i=1,2,3$) of $Y$:
\begin{equation*}
  M_{1}' = (M \x \left\{ x_{0} \right\}) \setminus P,\quad
  M_{2}' = (\left\{ x_{0} \right\} \x M) \setminus P,\quad
  M_{3}' = M_{\D} \setminus P
\end{equation*}
Here, $M_{\D} \subset M \x M$ is the diagonal submanifold.
The closure of $M_{i}'$ in $Y$ will be denote by $M_{i}$,
which is smoothly embedded $3$--submanifold of $Y$ such that
\begin{gather*}
  M_{1} = M_{1}' \amalg \left\{ (\infty,0) \right\},\quad
  M_{2} = M_{2}' \amalg \left\{ (0,\infty) \right\},\quad
  M_{3} = M_{3}' \amalg \left\{ (\infty,\infty) \right\},\\
  M_{i} \cong M,\quad
  M_{i} \cap M_{j} = \es\quad \text{($i \neq j$),}.
\end{gather*}
We then define 
\[
	Q = M_{1} \cup M_{2} \cup (-M_{3}),
\]
which is a $3$--submanifold of $Y$, see \fullref{fig:conn-sum}.
\begin{figure}[tb]
\begin{center}
  \begin{picture}(0,0)%
  \includegraphics{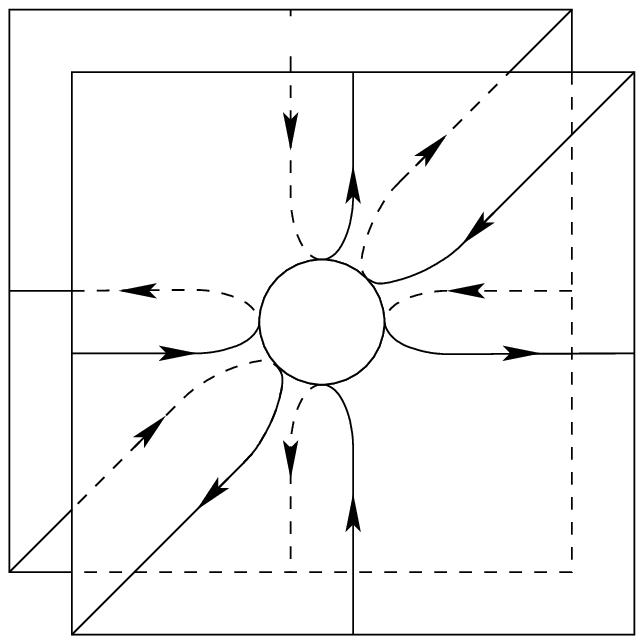} %
  \end{picture} %
  \setlength{\unitlength}{3947sp} %
  \begingroup\makeatletter\ifx\SetFigFont\undefined%
  \gdef\SetFigFont#1#2#3#4#5{%
	\reset@font\fontsize{#1}{#2pt} %
	\fontfamily{#3}\fontseries{#4}\fontshape{#5} %
	\selectfont} %
  \fi\endgroup%
  \begin{picture}(3537,3024)(5176,-4123)
  \put(5176,-2536){\makebox(0,0)[lb]{\smash{{\SetFigFont{12}{14.4}{\rmdefault}{\mddefault}{\updefault}{$Y = $} %
  }}}}
  \put(7651,-2011){\makebox(0,0)[lb]{\smash{{\SetFigFont{12}{14.4}{\rmdefault}{\mddefault}{\updefault}{$-M_3$} %
  }}}}
  \put(6376,-3286){\makebox(0,0)[lb]{\smash{{\SetFigFont{12}{14.4}{\rmdefault}{\mddefault}{\updefault}{$-M_3$} %
  }}}}
  \put(7426,-3361){\makebox(0,0)[lb]{\smash{{\SetFigFont{12}{14.4}{\rmdefault}{\mddefault}{\updefault}{$M_2$} %
  }}}}
  \put(6751,-1936){\makebox(0,0)[lb]{\smash{{\SetFigFont{12}{14.4}{\rmdefault}{\mddefault}{\updefault}{$M_2$} %
  }}}}
  \put(6376,-2386){\makebox(0,0)[lb]{\smash{{\SetFigFont{12}{14.4}{\rmdefault}{\mddefault}{\updefault}{$M_1$} %
  }}}}
  \put(7726,-2971){\makebox(0,0)[lb]{\smash{{\SetFigFont{12}{14.4}{\rmdefault}{\mddefault}{\updefault}{$M_1$} %
  }}}}
  \put(7424,-4061){\makebox(0,0)[lb]{\smash{{\SetFigFont{12}{14.4}{\rmdefault}{\mddefault}{\updefault}{``$M \times M$''-side} %
  }}}}
  \put(5776,-1322){\makebox(0,0)[lb]{\smash{{\SetFigFont{12}{14.4}{\rmdefault}{\mddefault}{\updefault}{``$-S^3\times S^3$''-side} %
  }}}}
  \end{picture} %
  \hspace{1.5cm}
  \hspace{1cm}
  \caption{The manifold pair $(Y,Q)$.}
  \label{fig:conn-sum}
\end{center}
\end{figure}
\begin{notation}
We will sometimes write $(Y(M),Q(M))$, instead of just $(Y,Q)$, to emphasize
that this is constructed from $M$.  
\end{notation}
Two (smooth oriented) manifold pairs $(W,V)$ and $(W',V')$ are said to be \emph{isomorphic} if
there exists an orientation preserving diffeomorphism $f \co W \to W'$ such that $f(V) = V'$ as
an oriented submanifold.
\begin{lemma}
  \label{lem:iso}
  The isomorphism class of the pair $(Y,Q)$ of manifolds depends only
   on the topological type and the orientation of $M$. In particular, it does 
   not depend on $x_{0}$ or $\vp$.    
\end{lemma}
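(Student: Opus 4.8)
The plan is to show that an orientation--preserving diffeomorphism of the underlying $3$--manifold induces an isomorphism of the constructed pairs, and then to exploit the uniqueness--of--charts property imposed on $U$ at the start of this section in order to absorb the dependence on $x_{0}$ and $\vp$.

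First I would establish functoriality. Let $h\co M\to M'$ be an orientation--preserving diffeomorphism of oriented integral homology $3$--spheres, and suppose $(Y(M),Q(M))$ is built from the data $(x_{0},\vp\co U\to\R^{3})$. Set $x_{0}'=h(x_{0})$, $U'=h(U)$ and $\vp'=\vp\circ(h|_{U'})^{-1}\co U'\to\R^{3}$, an admissible chart for $M'$ centred at $x_{0}'$. From $\vp'\circ h=\vp$ on $U$ one reads off the key identity $g_{\vp'}\circ(h\x h)=g_{\vp}$ on $U\x U\setminus P$, so the partial maps $(h\x h)|_{M\x M\setminus P}$ and $\mathrm{id}_{S^{3}\x S^{3}\setminus\{(0,0)\}}$ agree on the region along which $Y(M)$ and $Y(M')$ are glued; hence they assemble into a smooth map $F\co Y(M)\to Y(M')$, which is an orientation--preserving diffeomorphism with inverse $(h^{-1}\x h^{-1})\cup\mathrm{id}$. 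Moreover $h\x h$ carries $M\x\{x_{0}\}$, $\{x_{0}\}\x M$, $M_{\D}$ and $P$ onto the analogous submanifolds and point for $M'\x M'$, so it matches the open pieces of $Q(M)$ with those of $Q(M')$; and since $F$ is the identity on the $S^{3}\x S^{3}$--side, the three added points $(\infty,0),(0,\infty),(\infty,\infty)$ are preserved, and a short computation with $g_{\vp'}$ shows that these are exactly the limit points in $Y(M')$ of the corresponding open pieces. Hence $F(Q(M))=Q(M')$, so $F$ is an isomorphism of manifold pairs; in particular the isomorphism class of $(Y,Q)$ depends at most on the oriented diffeomorphism type of $M$ and on the chart $(x_{0},\vp)$.

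It then remains to remove the dependence on the chart. Given two admissible charts $(x_{0},\vp)$ and $(\bar x_{0},\bar\vp)$ on the same $M$, the assumption on $U$ made at the beginning of this section (which follows from the isotopy extension theorem together with the path--connectedness of the space of orientation--preserving embeddings $\R^{3}\hookrightarrow M$, valid since $M$ is connected) furnishes an orientation--preserving diffeomorphism $h\co M\to M$ with $h(U)=\bar U$ and $\bar\vp\circ h=\vp$ on $U$, i.e.\ $\bar x_{0}=h(x_{0})$ and $\bar\vp=\vp\circ(h|_{\bar U})^{-1}$. Applying the functoriality step to this $h$ (with $M'=M$) gives an isomorphism between the pair built from $(x_{0},\vp)$ and the one built from $(\bar x_{0},\bar\vp)$; combined with functoriality for an arbitrary orientation--preserving diffeomorphism $M\to M'$ (and the coincidence of the smooth and topological categories in dimension $3$), this proves the lemma.

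The only genuine content is the gluing verification in the functoriality step --- that the two partial diffeomorphisms match across the connected--sum neck, i.e.\ the identity $g_{\vp'}\circ(h\x h)=g_{\vp}$ --- together with the slightly delicate but routine identification of the closures $M_{i}$ and their ``points at infinity'' under $F$. Everything else is formal bookkeeping.
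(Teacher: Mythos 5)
Your proof is correct and follows essentially the same route as the paper: the key observations are that the "sufficiently small $U$" assumption from Section~3 (together with the smooth = topological equivalence in dimension three) produces an orientation--preserving diffeomorphism $h$ that intertwines the two charts, and that the identity $g_{\vp'}\circ(h\times h)=g_{\vp}$ lets $h\times h$ glue with the identity on $S^3\times S^3$ to a diffeomorphism of pairs. Your split into a functoriality step and a chart--independence step is a mild reorganization of the paper's one--pass argument (and you are a bit more explicit about checking that $Q$ and its points at infinity are matched), but there is no substantive difference; one small notational slip aside ($(h|_{U'})^{-1}$ should read $(h|_{U})^{-1}$, since $h$ has domain $M$), the argument is sound.
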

\begin{proof}
  Let $V_{i}$ be an oriented integral homology $3$--sphere with a base point $x_{i}$ 
  and with  an orientation preserving local coordinates $\vp_{i} \co U_{i} \to \R^{3}$ such that
  $\vp_{i}(x_{i}) = 0$ ($i=1,2$).  
  Then, we can define the pair of manifolds 
  \begin{equation*}
	(Y_{i}, Q_{i}) = (Y(V_{i}), Q(V_{i})),
  \end{equation*}
  by using the gluing map $g_{\vp_{i}}$ as in \eqref{eq:gluing-map}.

  Assume $V_{1} \cong V_{2}$ as an oriented topological manifold.
  Since the topological and the smooth categories are equivalent in dimension three, 
  there exists an orientation preserving diffeomorphism $h \co V_{1} \to V_{2}$ such that
  $h(U_{1}) = U_{2}$ and $\vp_{1} = \vp_{2} h|_{U_{1}}$. 
  Therefore, $g_{\vp_{1}}$ coincides with
  \begin{equation*}
	g_{\vp_{2}}(h \x h)|_{U_{1} \x U_{1} \setminus P_{1}} 
	\co 
	U_{1} \x U_{1} \setminus P_{1} \to
	\R^{3} \x \R^{3} \setminus \left\{ (0,0) \right\},
  \end{equation*}
  where $P_{i} = \left\{ (x_{i}, x_{i}) \right\}$.
  Hence, the diffeomorphism 
  \begin{equation*}
	h \x h \co V_{1} \x V_{1} \setminus P_{1} \to V_{2} \x V_{2} \setminus P_{2}
  \end{equation*}
  uniquely extends to an orientation preserving diffeomorphism $Y_{1} \to Y_{2}$
  which sends $Q_{1}$ onto $Q_{2}$.
  Hence,
  $(Y_{1}, Q_{1})$ and  $(Y_{2}, Q_{2})$ are isomorphic.
\end{proof}

\begin{lemma}\label{lem:eM}
  The pair $(Y,Q)$ admits a unique $e$-class.
\end{lemma}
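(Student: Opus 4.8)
The plan is to verify the two halves separately. For \emph{uniqueness}, observe that two $e$-classes differ by a class in $\ker\bigl(H^2(Y\setminus Q;\Q)\to H^2(\HQ;\Q)\bigr)$, where $\HQ = S(\nu_{M_1})\amalg S(\nu_{M_2})\amalg S(\nu_{M_3})$ is the total sphere-bundle boundary. So it suffices to show this restriction map is injective, or equivalently (by the long exact sequence of the pair $(Y_Q, \HQ)$, writing $Y_Q = Y\setminus U_Q$) that $H^2(Y_Q,\HQ;\Q) = 0$, since $H^2(Y\setminus Q;\Q)\cong H^2(Y_Q;\Q)$ by homotopy equivalence. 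By Lefschetz duality $H^2(Y_Q,\HQ;\Q)\cong H_4(Y_Q;\Q)$. Now $Y\cong (M\times M)\#(-S^3\times S^3)$, and removing tubular neighborhoods of the three copies of $M$ only affects homology in the relevant range through the classes carried by $M_1, M_2, M_3$. Using the Mayer--Vietoris / connected-sum computation and the fact that $M$ is an integral homology sphere (so $H_*(M\times M;\Q)$ is that of $S^3\times S^3$), one computes $H_4(Y;\Q) = 0$ and then tracks the effect of deleting the $M_i$; the upshot is $H_4(Y_Q;\Q)=0$, giving uniqueness.

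For \emph{existence}, I would build $e_M$ by hand using the connected-sum decomposition. On the $S^3\times S^3$ side, $\operatorname{Conf}_2(\R^3)\simeq S^2$, so there is a canonical generator of $H^2(-S^3\times S^3\setminus\{(0,0)\};\Q)$; on the $M\times M$ side one uses that $\operatorname{Conf}_2(M') = M'\times M'\setminus M'_\Delta$ likewise has $H^2(\,\cdot\,;\Q)\cong\Q$ (again because $M$ is a $\Q$-homology sphere — the propagator/Gauss-map class of Kuperberg--Thurston--Lescop), generated by the pullback of the $S^2$ class under the Gauss map. These two classes agree on the overlap region (an $S^2\times(\text{annulus})$), because the gluing map $g_\vp$ is defined so as to match the two configuration-space structures, so they glue to a global class $e_M\in H^2(Y\setminus Q;\Q)$. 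It then remains to check the normalization: $e_M$ restricted to each $S(\nu_{M_i})$ equals $e(F_{M_i})$ over $\Q$. Since $S(\nu_{M_i})$ is an $S^2$-bundle over $M_i\cong M$ and $H^2$ of it is $\Q$ (as $M$ has no rational $H^1$), it is enough to check the restriction is nonzero with the right value on a single fiber $S^2$, and on a fiber both $e_M$ and $e(F_{M_i})$ restrict to the standard generator of $H^2(S^2;\Q)$ — this is exactly the local model of the Gauss map near the diagonal (resp. near $M\times\{x_0\}$, $\{x_0\}\times M$). Hence $e_M$ is an $e$-class.

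I expect the main obstacle to be the \emph{existence} direction, specifically the compatibility of the two locally-defined second cohomology classes across the connected-sum region and the verification of the Euler-class normalization simultaneously on all three components $M_1, M_2, M_3$ (which sit differently: two ``axis'' copies and one ``diagonal'' copy). This is where one must use the precise form of the gluing map $g_\vp$ in \eqref{eq:gluing-map} rather than just the abstract diffeomorphism type of $Y$. The uniqueness direction should be routine once the homology computation $H_4(Y_Q;\Q)=0$ is in place; the only mild care needed there is bookkeeping of the three deleted submanifolds in the Mayer--Vietoris sequence. A cleaner alternative for existence, which I would try first, is to invoke directly the fact (used in the KKT construction) that $\operatorname{Conf}_2(M')$ admits a closed ``propagating'' $2$-form extending the $S^2$ volume form on the sphere-bundle boundary, and likewise for $\operatorname{Conf}_2(\R^3)$; the two forms can be chosen to agree on the collar where the pieces are identified, and their common cohomology class is the desired $e_M$. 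Uniqueness then pins it down as above.
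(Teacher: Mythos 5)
Your proposal is correct in outline but takes a genuinely different route from the paper. The paper does not construct $e_M$ by hand: it cites a general criterion from \cite{moriyama:emb63} (Proposition~6.1(5) there), which says that a closed codimension-$3$ pair $(W,V)$ with $\dim W=6$ admits a \emph{unique} $e$-class as soon as (i) the restriction $H^2(W;\Q)\to H^2(V;\Q)$ is an isomorphism and (ii) $[V]=0$ in $H_3(W;\Q)$. For $(Y,Q)$ both are quick: $b_1(Y)=b_2(Y)=b_1(Q)=b_2(Q)=0$ gives (i), and the relation $[M_1]+[M_2]=[M_3]$ in $H_3(Y;\Q)$ gives $[Q]=[M_1]+[M_2]-[M_3]=0$, hence (ii). That is the entire proof. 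So the paper packages the cohomological obstruction analysis once and for all in the prior paper, and here simply verifies two Betti-number/homology-class conditions.

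Your \emph{uniqueness} argument is an unpackaged version of the same thing, and it is correct: reduce to $H^2(Y_Q,\HQ;\Q)=0$, Lefschetz-dualize to $H_4(Y_Q;\Q)$, and compute it to be zero using that $Y\cong (M\times M)\#(-S^3\times S^3)$ has the rational homology of $(S^3\times S^3)^{\#2}$ and that deleting the three $M_i$ only affects $H_3$ and $H_6$ via the Thom isomorphism. This is fine, though a bit heavier than invoking the cited lemma. For \emph{existence}, however, your route has a real soft spot that you partly flag yourself. You propose to build $e_M$ by gluing the KKT propagator class on $\operatorname{Conf}_2(M')$ to the one on $-\operatorname{Conf}_2(\R^3)$. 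But the \emph{existence} of a propagator class on $\operatorname{Conf}_2(M')$ restricting correctly on the boundary sphere bundle is itself a nontrivial cohomological statement (it is precisely what the criterion in \cite{moriyama:emb63} abstracts, and what Lescop must also establish in the KKT setting), so appealing to it as a known fact essentially hides the same argument inside a black box. Moreover, the gluing map $g_\vp$ involves a sphere inversion, not the identity on configuration spaces, so ``the two classes agree on the overlap'' requires an actual calculation; and the normalization must be checked simultaneously on the two axis copies and the diagonal copy, which sit rather differently in $Y$. None of this is wrong, but making it airtight would cost more than the two-line verification that the paper performs. In short: your argument is sound in spirit, but the paper's route is substantially more economical because the heavy lifting was front-loaded into a general lemma, whereas your existence step re-imports that heavy lifting through the propagator literature.
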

\begin{proof}
  In general,
  a closed manifold pair $(W,V)$ of dimensions $6$ and $3$ admits
  a unique $e$-class if it satisfies the following two conditions~\cite[Proposition~6.1~(5)]{moriyama:emb63}:
  \begin{enumerate}
	\item \label{item:iso} The restriction $H^{2}(W;\Q) \to H^{2}(V;\Q)$ is isomorphic.
	\item \label{item:null-h} $[V] = 0$ in $H_{3}(W;\Q)$, where $[V]$ is the fundamental
	  homology class of $V$.
  \end{enumerate}

  Since the first and the second betti--numbers of $Y$ and $Q$ vanish, $(Y,Q)$ satisfies~\eqref{item:iso}.
  By the same reason, we have
  $
	[M_{1}] + [M_{2}] = [M_{3}]
  $
  in $H_{3}(Y;\Q)$.  Consequently, $[Q] = [M_{1}]+[M_{2}] - [M_{3}] = 0$, namely, $(Y,Q)$
  satisfies~\eqref{item:null-h}.
  Hence, $(Y,Q)$ admits an unique $e$-class.
\end{proof}
We denote by
$
  e_{M} \in H^{2}(Y \setminus Q; \Q)
$
the unique $e$-class of $(Y,Q)$, and we define
\[
  \a_{M} = (Y,Q,e_{M})
\]
which is a $6$--dimensional closed spin $e$-manifold.
\begin{proposition}\label{prop:top-inv}
  The isomorphism class of $\a_{M}$ depends only on the topological type and the orientation of $M$.
\end{proposition}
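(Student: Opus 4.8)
The plan is to reduce the statement to \fullref{lem:iso} and \fullref{lem:eM}. Let $M_{1}$ and $M_{2}$ be oriented integral homology $3$--spheres that agree as oriented topological manifolds, each equipped with a base point and an orientation preserving local coordinate chart as in \fullref{sec:construction}, and form the resulting $6$--dimensional closed spin $e$-manifolds $\a_{M_{i}}=(Y_{i},Q_{i},e_{M_{i}})$; allowing $M_{1}$ and $M_{2}$ to be the same manifold with different auxiliary choices, this simultaneously covers the independence of $\a_{M}$ from $x_{0}$ and $\vp$ and its dependence only on the topological type and orientation of $M$. By \fullref{lem:iso} there is an orientation preserving diffeomorphism $f\co(Y_{1},Q_{1})\to(Y_{2},Q_{2})$ of manifold pairs. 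By the definition of isomorphism of $e$-manifolds, it remains to upgrade $f$ to an isomorphism $\a_{M_{1}}\to\a_{M_{2}}$, i.e.\ to show $f^{*}e_{M_{2}}=e_{M_{1}}$.

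The key step is to check that $f^{*}e_{M_{2}}$ is again an $e$-class of $(Y_{1},Q_{1})$, that is, $(f^{*}e_{M_{2}})|_{S(\nu_{Q_{1}})}=e(F_{Q_{1}})$ over $\Q$. After composing $f$ with an ambient isotopy fixing $Q_{2}$ (uniqueness of tubular neighborhoods) we may assume $f(\nu_{Q_{1}})=\nu_{Q_{2}}$, so that $f$ restricts to an orientation preserving diffeomorphism $S(\nu_{Q_{1}})\to S(\nu_{Q_{2}})$ taking normal fibers to normal fibers with degree $+1$. Since each $Q_{i}\cong M$ is a $\Q$--homology $3$--sphere, the Gysin sequence gives $H^{2}(S(\nu_{Q_{i}});\Q)\cong\Q$, generated by $e(F_{Q_{i}})$, which evaluates to $2$ on an oriented normal fiber $S^{2}$. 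Hence $f^{*}e(F_{Q_{2}})$ lies in a one--dimensional space and evaluates to $2$ on the fiber, so $f^{*}e(F_{Q_{2}})=e(F_{Q_{1}})$. Pulling the defining relation $e_{M_{2}}|_{S(\nu_{Q_{2}})}=e(F_{Q_{2}})$ back through $f$ then yields $(f^{*}e_{M_{2}})|_{S(\nu_{Q_{1}})}=e(F_{Q_{1}})$ over $\Q$, as wanted; this functoriality of $e$-classes under isomorphisms of pairs is in the spirit of \cite{moriyama:emb63}.

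By \fullref{lem:eM} the pair $(Y_{1},Q_{1})$ admits a \emph{unique} $e$-class, so $f^{*}e_{M_{2}}=e_{M_{1}}$, and therefore $f\co\a_{M_{1}}\to\a_{M_{2}}$ is an isomorphism of spin $e$-manifolds. I expect the only point requiring care to be the middle step: arranging $f$ to respect the tubular neighborhoods and tracking the orientation conventions of \fullref{sec:notation} on $S(\nu_{Q})$ and on $F_{Q}$, so that the Euler classes match on the nose rather than merely up to sign. Everything else is a formal consequence of the two preceding lemmas.
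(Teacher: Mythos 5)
Your proposal is correct and follows the same route as the paper: combine \fullref{lem:iso} and \fullref{lem:eM} with the observation that pullback along an isomorphism of manifold pairs sends $e$-classes to $e$-classes. The only difference is that the paper cites this last fact as a general naturality property of $e$-classes (it follows directly because, after matching tubular neighborhoods, $df$ carries $F_{Q_{1}}$ to $F_{Q_{2}}$ as oriented bundles, so $f^{*}e(F_{Q_{2}})=e(F_{Q_{1}})$ by naturality of the Euler class), whereas you re-derive it via a Gysin-sequence dimension count on each component of $Q\cong M\amalg M\amalg(-M)$; that detour is sound but unnecessary, and you should phrase it component-wise since $H^{2}(S(\nu_{Q_{i}});\Q)$ is three-dimensional, not one-dimensional.
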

\begin{proof}
  In general, if there is an isomorphism $f \co (W,V) \to (W',V')$ of pair of manifolds of codimension
  $3$, then the pull--back $f^{*} \co H^2(W' \setminus V';\Q) \to H^{2}(W \setminus
  V;\Q)$ maps an $e$-class to an $e$-class.
  Thus, by \fullref{lem:iso} and \fullref{lem:eM}, 
  the isomorphism class of $\a_{M}$ depends only on the topological type and the orientation of
  $M$.
\end{proof}

\section{\texorpdfstring{An involution}{An involution}}
\label{sec:involution}
Let $G = \left\{ 1,\i \right\}$ be a multiplicative group of order two.
Let $M$, $g_{\vp}$,  and $\a_{M} = (Y,Q,e_{M})$ be as in \fullref{sec:construction}.
In this section, 
we prove \fullref{main:spin-cob}~\eqref{item:reverse}, by 
constructing a $G$--action on $\a_{M}$ which reverses the orientation of $Y$.
\begin{remark}
  In this paper, $G$--actions we use may reverses the orientation of manifolds.
  Therefore,  in this paper, 
  a \emph{$G$--manifold} (resp.~\emph{$G$--vector bundle}) will mean an oriented manifold
  (resp.~vector bundle) with a smooth $G$--action which may reverses the orientation unless
  otherwise stated.
\end{remark}

The group $G$ acts on $M \x M$ and $S^{3} \x S^{3}$ by permuting coordinates.
Since the gluing map $g_{\vp}$ commutes with the $G$-action, $Y$
has the induced smooth $G$-action.
It is easy to check that $\i(M_{1}) = M_{2}$,
and that the fixed point set of the action on $Y$ is $M_{3}$.
Consequently, $\i(Q) = Q$ as an oriented submanifold.
Note that the involution $\i$ reverses the orientation of $Y$ and preserves that of $Q$.	
Thus, we can regard $\i$ as an isomorphism
\begin{equation}
  \label{eq:iota}
  \i \co (-Y,-Q) \to (Y,-Q)
\end{equation}
of pair of (oriented) manifolds.  
\begin{lemma}
  \label{lem:ie} 
  \textup{\fullref{main:spin-cob}~\eqref{item:reverse}} holds, namely, $\a_{-M} \cong -\a_{M}$.
\end{lemma}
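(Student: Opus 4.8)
The plan is to reduce the statement to the construction of a single orientation--preserving diffeomorphism of manifold pairs $f\co (Y(-M),Q(-M)) \to (-Y(M),-Q(M))$; everything else is then automatic. Indeed, the four manifolds $Y(\pm M)$ and $Q(\pm M)$ all have vanishing first $\Z/2$--cohomology (their first Betti numbers vanish and they carry no $2$--torsion in $H_{1}$, being built out of $\Z$--homology $3$--spheres), so each admits a \emph{unique} spin structure; and by \fullref{lem:eM} each of the two pairs admits a \emph{unique} $e$--class. Hence, granting such an $f$, the pull--back $f^{*}e_{M}$ is an $e$--class of $(Y(-M),Q(-M))$ (pull--backs of $e$--classes under isomorphisms of codimension--$3$ pairs are $e$--classes, exactly as in the proof of \fullref{prop:top-inv}), so $f^{*}e_{M}=e_{-M}$; and $f$ automatically respects the unique spin structures. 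Since $-\a_{M}=(-Y(M),-Q(M),e_{M})$ by definition of $-\b$, this gives $\a_{-M}\cong-\a_{M}$.

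To build $f$ I would first compare the data defining $Y(-M)$ and $Y(M)$. As oriented manifolds $(-M)\x(-M)=M\x M$ via the identity (reversing both factors of a product flips the orientation twice). By \fullref{lem:iso} we may choose the chart used in constructing $Y(-M)$; take $\psi=\rho\circ\vp$ where $\rho\co\R^{3}\to\R^{3}$ is an orthogonal reflection, which is an orientation--preserving chart of $-M$ centred at $x_{0}$. Since $\rho$ is linear and isometric, $\norm{\rho(v)}=\norm{v}$ and $g_{\psi}=(\rho\x\rho)\circ g_{\vp}$. Therefore the identity on the ``$M\x M$''--side, together with $r_{S}\x r_{S}$ on the ``$S^{3}\x S^{3}$''--side (where $r_{S}\co S^{3}\to S^{3}$ extends $\rho$ and fixes $0$ and $\infty$), are compatible along the gluing region and define a diffeomorphism $\Psi\co Y(M)\to Y(-M)$. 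It is orientation preserving, being the identity on the ``$M\x M$''--side, whose orientation is the same whether computed for $M$ or for $-M$. Tracking the submanifolds: $r_{S}\x r_{S}$ preserves $S^{3}\x\{0\}$, $\{0\}\x S^{3}$, and the diagonal of $S^{3}\x S^{3}$, so $\Psi$ carries the underlying set of each $M_{i}(M)$ onto that of $M_{i}(-M)$; but $M_{i}(M)\cong M$ while $M_{i}(-M)\cong -M$, so $\Psi$ reverses their orientations as submanifolds, whence $\Psi(Q(M))=-Q(-M)$.

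Finally I would compose with the involution of \fullref{sec:involution}. By \eqref{eq:iota}, $\i\co(-Y(M),-Q(M))\to(Y(M),-Q(M))$ is an isomorphism of oriented pairs, and $\i$ reverses the orientation of $Y(M)$ while preserving that of $Q(M)$. Set $f:=\i^{-1}\circ\Psi^{-1}\co Y(-M)\to Y(M)$: as $\Psi^{-1}$ preserves and $\i^{-1}$ reverses the ambient orientation, $f$ is orientation preserving as a map to $-Y(M)$; and on submanifolds $\Psi^{-1}(Q(-M))=-Q(M)$, which $\i^{-1}$ fixes, so $f(Q(-M))=-Q(M)$. Thus $f\co(Y(-M),Q(-M))\to(-Y(M),-Q(M))$ is the required isomorphism of oriented pairs, and by the first paragraph $\a_{-M}\cong-\a_{M}$. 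The only real work is the orientation bookkeeping in the construction of $\Psi$ — checking that passing from $M$ to $-M$ is accounted for precisely by the reflection $\rho$ on the chart and by the change of sign of the submanifold orientations — and I expect this to be the main, though routine, point.
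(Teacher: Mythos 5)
Your proof is correct and takes essentially the same route as the paper's. Both proofs reduce to (i) identifying $(Y(-M),Q(-M))$ with $(Y(M),-Q(M))$ and (ii) composing with the involution $\i$ of \fullref{sec:involution} to flip the ambient orientation; the paper states step (i) tersely (``We shall identify \dots'') and then chases the sign of the $e$--class explicitly, whereas you build the identification concretely as the diffeomorphism $\Psi$ from a reflection $\rho$ on the chart and invoke uniqueness of the $e$--class (\fullref{lem:eM}) and of the spin structure to avoid the sign chase — a harmless reorganisation, and your orientation bookkeeping for $\Psi$, $\Psi^{-1}$, and $\i^{-1}\circ\Psi^{-1}$ all checks out.
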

\begin{proof}
  We shall identify $(Y(-M), Q(-M))$ with $(Y, -Q)$ which admits a unique
  $e$-class $-e_{M}$ by \fullref{lem:eM}, and hence, 
  \[
  	\a_{-M} = (Y,-Q,-e_{M}).
  \]
  The homomorphism $\i^* \co H^{2}(Y \setminus Q;\Q) \to H^{2}(Y \setminus Q; \Q)$ induced
  from \eqref{eq:iota} maps an $e$-class of $(Y,-Q)$ to an $e$-class of $(-Y,-Q)$, 
  which means $\i^*(-e_{M}) = e_{M}$.
  Thus, $\i$ is an isomorphism from $-\a_M$ to $\a_{-M}$. 
\end{proof}

\section{\texorpdfstring{Spin cobordism group of $e$-manifolds}{Spin cobordism group of
e-manifolds}}
\label{sec:spin-cobordism}
In~\cite{moriyama:emb63}, we proved that 
there is an isomorphism $\O_{6}^{e} \cong (\Q/\Z)^{\oplus 2}$, where $\O_{6}^{e}$ is the
cobordism group of $6$--dimensional $e$-manifolds.
In this section, we prove 
that there is a similar isomorphism $\Oes \cong (\Q/16\Z) \oplus
(\Q/4\Z)$.
The only difference between the two proofs is that spin structures are not considered in
\cite{moriyama:emb63},
and the essential ideas behind the proofs are the same.

\subsection{\texorpdfstring{Preliminaries: $K(\Q,2)$ and $BSpin(3)$}{Preliminaries: K(Q,2) and
BSpin(3)}}
\label{sec:preliminaries}
Let $K(\Q,2)$ be the Eilenberg--MacLane space of type $(\Q,2)$.
The reduced homology group of $K(\Q,2)$ is given as follows (cf.~\cite{GM}):
\begin{equation}
  \Tilde{H}_{k}(K(\Q,2);\Z) \cong
  \begin{cases}
	\Q & \text{if $k> 0$ and $k \equiv 0 \pmod{2}$}\\
	0 & \text{otherwise}
  \end{cases}
  \label{eq:KQ2}
\end{equation}
The cohomology group $H^{2k}(K(\Q,2); \Q) \cong \Q$ ($k \geq 0$) is generated by the $k$--th power $a_{1}^{k}$ of the
dual element $a_{1} \in H^{2}(K(\Q,2);\Q)$ of $1 \in \pi_{2}(K(\Q,2))$.

Let $BSpin(3)$ be the classifying space of the Lie group $Spin(3)$.  
Since $BSpin(3)$ is homotopy equivalent  to the infinite dimensional quaternionic projective space
$\HP^{\infty}$,  the following isomorphism holds:
\begin{equation}
  H_{k}(BSpin(3);\Z) \cong
  \begin{cases}
  \Z & \text{if $k \geq 0$ and $k \equiv 0 \pmod{4}$}\\
  0 & \text{otherwise}
  \end{cases}
  \label{eq:BSpin3}
\end{equation}

We can assume that $K(\Q,2)$ and $BSpin(3)$ have structures of CW--complexes.
Let $\Os_{*}(V)$ denote the spin cobordism group of a CW--complex $V$.
In low--dimensions, the spin cobordism group $\Os_{*} = \Os_{*}(pt)$ of one point  $pt$ is given
as follows (cf.~\cite{spin-geom}):
\begin{equation}
  \begin{array}{c|ccccccc}
	k & 0 & 1 & 2 & 3 & 4 & 5 & 6\\
	\hline
	\Os_{k} & \Z & \Z/2 & \Z/2 & 0 & \Z & 0 & 0
  \end{array}
  \label{tbl:Ospin}
\end{equation}
In general, the Atiyah--Hirzebruch spectral sequence $E^{n}_{p,q}(Y)$ for $\Os_{*}(Y)$ converges
 (cf.~\cite{switzer}):
\begin{equation*}
  E^{2}_{p,q} = H_{p}(Y;\Os_{q}) \Longrightarrow \Os_{p+q}(Y)
\end{equation*}
The following lemma is an easy application of the Atiyah--Hirzebruch Spectral sequence.
\begin{lemma}
  \label{lem:O-iso}
  The following isomorphisms hold:
  \begin{equation*}
	\Os_{6}(K(\Q,2)) \cong \Q^{\oplus 2},\quad
	\Os_{4}(BSpin(3)) \cong \Z^{\oplus 2}
  \end{equation*}
\end{lemma}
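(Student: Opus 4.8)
The plan is to run the Atiyah--Hirzebruch spectral sequence $E^2_{p,q} = H_p(Y;\Os_q) \Rightarrow \Os_{p+q}(Y)$ for the two spaces $Y = K(\Q,2)$ and $Y = BSpin(3)$ separately, using the homology computations \eqref{eq:KQ2} and \eqref{eq:BSpin3} together with the table \eqref{tbl:Ospin} of $\Os_*$. In both cases the strategy is the same: first identify the $E^2$ terms in total degree $\le 6$, then argue that all relevant differentials vanish (for degree reasons or because the groups involved are torsion-free against torsion), and finally check that the resulting extension problems are trivial.

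First I would treat $Y = K(\Q,2)$. By \eqref{eq:KQ2} the reduced homology $\Tilde H_p(K(\Q,2);\Z)$ is $\Q$ for $p$ even and positive and $0$ otherwise, so $E^2_{p,q} = H_p(K(\Q,2);\Os_q)$ is nonzero in total degree $\le 6$ only at $(p,q) = (0,0), (2,0), (4,0), (6,0)$ — giving $\Os_0 = \Z$ at the origin and $\Q$ at the other three spots — since $\Os_1 = \Os_2 = \Z/2$ tensored or Tor'd against a rational homology group vanishes, and $\Os_3 = \Os_5 = \Os_6 = 0$. Thus the only contributions to $\Os_6(K(\Q,2))$ come from $E^2_{6,0} \cong \Q$ and... wait, I should be careful: $E^2_{p,q}$ with $q$ odd and small contributes $H_p(K(\Q,2);\Z/2)$, but $K(\Q,2)$ has trivial mod-$2$ reduced homology, so those vanish too. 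Hence in total degree $6$ the only surviving $E^2$ entry is $E^2_{6,0} \cong \Q$, and in total degree $4$ only $E^2_{4,0}\cong\Q$, and in degree $2$ only $E^2_{2,0}\cong\Q$. All differentials into or out of these are either zero by sparsity or land in/come from zero groups, so $E^2 = E^\infty$ in this range; since there is a single nonzero associated-graded piece in total degree $6$, we get $\Os_6(K(\Q,2)) \cong \Q$. Hmm — but the lemma claims $\Q^{\oplus 2}$.

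Let me reconsider: the point is that $\Os_6(K(\Q,2))$ should be computed together with the contribution from the basepoint, i.e. one uses the \emph{reduced} theory plus $\Os_6(pt)=0$, or more to the point the relevant generators are detected by the two characteristic numbers $\frac16\int p_1 e - e^3$ and $\frac12\int e^3$ appearing in \fullref{main:O}. The correct bookkeeping is that $\Os_*(K(\Q,2)) \otimes \Q \cong \Os_*(pt)\otimes\Q \otimes \Tilde H_*(K(\Q,2);\Q) \oplus (\Os_*(pt)\otimes\Q)$ by the rational collapse of the AHSS (all differentials are torsion-valued on rational classes, hence zero), so $\Os_6(K(\Q,2))\otimes\Q$ receives $H_0(\Os_6)\oplus H_2(\Os_4)\oplus H_4(\Os_2)\oplus H_6(\Os_0)$ rationally $= 0 \oplus \Q \oplus 0 \oplus \Q = \Q^{\oplus 2}$, using $\Os_6\otimes\Q = \Os_2\otimes\Q = 0$ and $\Os_4\otimes\Q=\Os_0\otimes\Q=\Q$; and since everything in sight in this degree is torsion-free (the $\Z/2$'s contribute nothing rationally and are killed by the rational homology of $K(\Q,2)$ anyway), $\Os_6(K(\Q,2)) \cong \Q^{\oplus 2}$ on the nose. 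For $Y=BSpin(3)$: by \eqref{eq:BSpin3}, $H_p(BSpin(3);\Z) = \Z$ for $p \equiv 0 \pmod 4$ and $0$ otherwise, so $E^2_{p,q}$ in total degree $\le 4$ is $\Os_q$ at $p=0$ and $H_4(BSpin(3);\Os_0)=\Z$ at $(4,0)$; in total degree $4$ the contributions are $E^2_{0,4} = \Os_4 = \Z$ and $E^2_{4,0} = \Z$, with no room for differentials (the potential $d_r$ would have source or target in a zero group since $H_p$ vanishes for $p=1,2,3,5$), so $E^2 = E^\infty$ and $\Os_4(BSpin(3))$ is an extension of $\Z$ by $\Z$, hence $\cong \Z^{\oplus 2}$ after checking the extension splits — which it does because the projection $\Os_4(BSpin(3)) \to \Os_4(pt) = \Z$ (induced by the map to a point) provides a retraction onto the $E^\infty_{0,4}$ summand.

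The main obstacle is the vigilance required in the $K(\Q,2)$ case: one must be sure that no odd-$q$ row contributes (it does not, because $\Tilde H_*(K(\Q,2);\Z)$ is rational and the relevant $\Os_q$ for $q$ odd small are $2$-torsion, so $H_p(K(\Q,2);\Z/2) = 0$ kills them), and one must correctly separate the ``reduced'' contribution $H_2(\Os_4)\oplus H_6(\Os_0)$ from spurious degree-$6$ terms; the differentials are then forced to vanish for sparsity reasons. For $BSpin(3)$ the only real content is noting the extension splits via the augmentation, which is immediate. In both cases no genuine differential computation is needed — the sparsity of $H_*(Y)$ does all the work.
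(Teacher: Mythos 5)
Your proof is correct and follows essentially the same route as the paper: run the Atiyah--Hirzebruch spectral sequence for each of $K(\Q,2)$ and $BSpin(3)$, observe that the $E^{2}$ page is sparse in the relevant total degrees, and read off the answer. The self-corrected passage for $K(\Q,2)$ lands in the right place (the two surviving entries in total degree $6$ are $E^{2}_{6,0}\cong\Q$ and $E^{2}_{2,4}\cong\Q$, and the extension splits because $\Q$ is divisible), matching the paper.

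One small slip worth flagging in the $BSpin(3)$ case: you assert that every potential differential ``would have source or target in a zero group since $H_p$ vanishes for $p=1,2,3,5$.'' This is not quite true — there is a possible $d_{4}\colon E^{4}_{4,1}\to E^{4}_{0,4}$, and $E^{2}_{4,1}=H_{4}(BSpin(3);\Os_{1})=H_{4}(BSpin(3);\Z/2)\cong\Z/2$ is \emph{not} zero, nor is $E^{2}_{0,4}\cong\Z$. That differential vanishes for a different reason: it is a homomorphism $\Z/2\to\Z$, which must be trivial. With that patch the argument is complete; your observation that the extension $0\to\Z\to\Os_{4}(BSpin(3))\to\Z\to 0$ splits (the quotient is free, or equivalently the augmentation $\Os_{4}(BSpin(3))\to\Os_{4}(pt)$ retracts onto $E^{\infty}_{0,4}$) is fine.
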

\begin{proof}
  We use \eqref{eq:KQ2}, \eqref{eq:BSpin3}, and \eqref{tbl:Ospin}
  to prove this lemma.
  The Atiyah--Hirzebruch spectral sequence $E^{n}_{p,q} = E^{n}_{p,q}(K(\Q,2))$ for
  $\O_{*}^{spin}(K(\Q,2))$ converges on
  the $E^{2}$-stage within the range $p+q \leq 6$, and so $E^{\infty}_{p,q} \cong E^{2}_{p,q}$
  in the same range.
  Consequently, we have
  \begin{equation*}
	E^{\infty}_{p,6-p} \cong
	\begin{cases}
	  \Q & \text{if $p=6,2$},\\
	  0 & \text{otherwise},
	\end{cases}
  \end{equation*}
  and therefore, $\Os_{6}(K(\Q,2)) \cong \Q^{\oplus 2}$.

  Similarly, the spectral sequence $F^{n}_{p,q} = E^{n}_{p,q}(BSpin(3))$
  converges on the $F^{2}$-stage in the range $p+q \leq 4$, and 
  \begin{equation*}
	F^{\infty}_{p,4-p} \cong
	\begin{cases}
	  \Z & \text{if $p=4,0$,}\\
	  0 & \text{otherwise.}
	\end{cases}
  \end{equation*}
  Thus, $\Os_{4}(BSpin(3)) \cong \Z^{\oplus 2}$.
\end{proof}

\subsection{\texorpdfstring{Spin cobordism groups of $BSpin(3)$ and
$K(\Q,2)$}{Spin cobordism groups of BSpin(3) and K(Q,2)}}
\label{sec:BSpin3-KQ2}
We define three homomorphisms $\chi$, $\xi$, and $\up$ as follows.
A pair $(W,e)$ of a closed spin $6$--manifold $W$ and a cohomology class $e \in H^{2}(W;\Q)$
represents an element $[W,e] \in \Os_{6}(K(\Q,2))$.  
Here, we identify $e$ with the homotopy class of a map $f \co W \to K(\Q,2)$ such that
$f^*a_{1} = e$.
Define a homomorphism
\[
	\chi \co \Os_{6}(K(\Q,2)) \to \Q^{\oplus 2}
\]
by $\chi([W,e]) = \left(\chi_{1}(W,e), \chi_{2}(W,e) \right)$, where
\begin{align*}
  \chi_{1}(W,e) &= \frac{1}{6}\int_{W}p_{1}(TW)\,e - e^{3} \in \Q,\\
  \chi_{2}(W,e) &= \frac{1}{2} \int_{W}e^{3} \in \Q.
\end{align*}
Similarly, a pair $(X,E)$ of a closed spin $4$--manifold $X$ and a spin vector bundle $E$ of rank $3$
over $X$ represents an element $[X,E] \in \Os_{4}(BSpin(3))$.  Here, we identify the
isomorphism class of $E$ with
the homotopy class of the classifying map $X \to BSpin(3)$ of $E$.
Note that $p_{1}(E) \equiv 0 \pmod{4}$ (since $E$ is spin),
and that
\begin{equation}
  \Sign X \equiv 0 \pmod{16}
  \label{eq:rokhlin}
\end{equation}
by the Rokhlin's theorem.
Define a homomorphism 
\begin{equation*}
	\xi \co \Os_{4}(BSpin(3)) \to 16\Z \oplus 4\Z
\end{equation*}
\begin{equation*}
  \tag*{\text{by}}
  \xi([X,E]) = \left(  \Sign X, \int_{X}p_{1}(E)\right).
\end{equation*}
We will see soon that $\chi$ and $\xi$ are isomorphic (\fullref{lem:chi-xi}).
We define a homomorphism
\[
\up \co \Os_{4}(BSpin(3)) \to \Os_{6}(K(\Q,2))
\]
by
$
\up([X,E]) = [S(E), e(F_{E})].
$

Now, for a pair $(X,E)$ representing an element in $\Os_{4}(BSpin(3))$,
the characteristic classes of the vector bundles $E$, $F_{E}$, $TX$, and $TS(E)$
satisfy the following relations:
\begin{align}
  \label{eq:pe1}
  e(F_{E})^{2} &= p_{1}(F_{E}) = \rho_{E}^*p_{1}(E)\\
  \label{eq:pe2}
  &\equiv p_{1}(TS(E)) - \rho_{E}^*p_{1}(TX)\quad \text{(modulo $2$--torsion
  elements)},\\
  \label{eq:gysin}
  {\rho_{E}}_{!} e(F_{E}) &=2
\end{align}
Here, ${\rho_{E}}_{!} \co H^{2}(S(E);\Z) \to H^{0}(X;\Z)$ is the
Gysin homomorphism of $\rho_{E}$, and  $2 \in
H^{0}(X;\Z)$ denotes the element given by the constant function on $X$ with
the value $2$ ($=$ Euler characteristic of $S^{2}$).
The Hirzebruch signature theorem states that
\begin{equation}
  \label{eq:hirzebruch}
  \Sign X = \frac{1}{3}\int_{X}p_{1}(TX).
\end{equation}
The next two lemmas are easy to prove.
\begin{lemma}
  \label{lem:cu}
  $\chi \up = \xi$.  In other words, for any pair $(X,E)$ of closed spin $4$--manifold $X$ and a spin
  vector bundle $E$ of rank $3$ over $X$, we have
  \begin{equation*}
	\label{eq:SO}
	\chi([S(E), e(F_{E})]) = \left( \Sign X,\  \int_{X}p_{1}(E) \right).
  \end{equation*}
\end{lemma}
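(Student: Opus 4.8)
The plan is to compute $\chi_1$ and $\chi_2$ directly on the sphere bundle $S(E)$ using the characteristic-class relations \eqref{eq:pe1}, \eqref{eq:pe2}, \eqref{eq:gysin} and the Gysin sequence for $\rho_E \co S(E) \to X$, then recognize the answers as $\Sign X$ and $\int_X p_1(E)$ via the Hirzebruch signature theorem \eqref{eq:hirzebruch}. The key point is that all the relevant cohomology of $S(E)$ in the degrees we need is controlled by $\rho_E^*H^*(X)$ together with the fibre class $e(F_E)$, so every integral over $S(E)$ can be pushed down to $X$.

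First I would set $e = e(F_E) \in H^2(S(E);\Q)$ and record the Gysin/Leray--Hirsch picture: since the rational Euler class of the $S^2$-bundle may be nonzero, $H^*(S(E);\Q)$ is the free $\rho_E^*H^*(X;\Q)$-module on $1$ and $e$, subject to the single relation $e^2 = \rho_E^* p_1(E)$ coming from \eqref{eq:pe1}. Integration over $S(E)$ is then ``integrate over $X$ after applying ${\rho_E}_!$'', and \eqref{eq:gysin} gives ${\rho_E}_! e = 2$, ${\rho_E}_! 1 = 0$. Next I would compute $e^3 = e \cdot \rho_E^* p_1(E)$, so $\frac12\int_{S(E)} e^3 = \frac12 \int_X {\rho_E}_!(e\cdot \rho_E^*p_1(E)) = \frac12 \int_X 2\,p_1(E) = \int_X p_1(E)$; this is exactly $\chi_2([S(E),e])$ and matches the second component of $\xi$. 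For the first component I would use \eqref{eq:pe2}, which identifies $\rho_E^* p_1(TX)$ with $p_1(TS(E)) - e^2$ modulo $2$-torsion (hence modulo nothing, rationally). Therefore $p_1(TS(E))\,e = \rho_E^*p_1(TX)\cdot e + e^3$, and
\[
  \tfrac16 \int_{S(E)} p_1(TS(E))\,e - e^3
  = \tfrac16 \int_{S(E)} \rho_E^*p_1(TX)\cdot e
  = \tfrac16 \int_X p_1(TX)\cdot {\rho_E}_! e
  = \tfrac13 \int_X p_1(TX) = \Sign X,
\]
the last equality being \eqref{eq:hirzebruch}. This is $\chi_1([S(E),e])$ and equals the first component of $\xi$.

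I expect the only genuine subtlety to be bookkeeping about torsion and rationalization: relation \eqref{eq:pe2} holds only modulo $2$-torsion, and \eqref{eq:gysin} is an integral identity, so I would be careful to state that, once we tensor with $\Q$, the $2$-torsion ambiguity disappears and the Gysin pushforward is $\Q$-linear, which is all that is needed since $\chi$ lands in $\Q^{\oplus 2}$. I would also note explicitly that $[S(E), e(F_E)]$ really does define the class $\up([X,E])$, i.e. that $e(F_E)$ corresponds to a map $S(E)\to K(\Q,2)$, which is immediate since $H^2(S(E);\Q)$ is the set of homotopy classes of such maps. With these remarks in place the computation above is routine, so the proof is short.
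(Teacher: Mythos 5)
Your proof is correct and follows essentially the same route as the paper: use $e(F_E)^2 = \rho_E^* p_1(E)$ and $p_1(TS(E)) - \rho_E^* p_1(TX) \equiv p_1(F_E)$ (mod 2-torsion, irrelevant over $\Q$) to rewrite the integrands, then apply the Gysin pushforward ${\rho_E}_! e(F_E) = 2$ and the Hirzebruch signature theorem. The extra remarks on Leray--Hirsch, torsion, and the identification of $e(F_E)$ with a map $S(E)\to K(\Q,2)$ are sound but not needed beyond what the paper already records.
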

\begin{proof}
  This follows from the formulas \eqref{eq:pe1}, \eqref{eq:pe2},
  \eqref{eq:gysin}, and \eqref{eq:hirzebruch}.
  In fact, we have
  $p_{1}(TS(E))e(F_{E}) - e(F_{E})^{2} = \rho_{E}^*p_{1}(TX) e(F_{E})$, and so
 \begin{equation*}
   \chi_{1}(S(E), e(F_{E})) = \frac{1}{6}\int_{S(E)}^{} \rho_{E}^*p_{1}(TX) e(F_{E})
   = \frac{1}{3} \int_{X}^{}p_{1}(TX) = \Sign X.
 \end{equation*}
 Similarly, we have
 \begin{equation*}
   \chi_{2}(S(E),e(F_{E})) = \frac{1}{2} \int_{S(E)}^{} \rho_{X}^*p_{1}(E) e(F_{E}) = 
   \int_{X}^{}p_{1}(E).
   \proved
 \end{equation*}
\end{proof}

\begin{lemma}
  \label{lem:chi-xi}
  The homomorphisms $\chi$ and $\xi$ are isomorphic.
\end{lemma}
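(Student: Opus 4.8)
The statement to prove is Lemma~\ref{lem:chi-xi}: that $\chi$ and $\xi$ are isomorphisms. By Lemma~\ref{lem:O-iso} both $\Os_{6}(K(\Q,2))$ and $\Os_{4}(BSpin(3))$ are free abelian of rank $2$ (the former over $\Q$, the latter over $\Z$), and the targets $\Q^{\oplus 2}$ and $16\Z\oplus 4\Z$ are likewise free of rank $2$. So the plan is to exhibit, in each case, two explicit generators of the source and to compute the $2\times 2$ matrix of $\chi$ (resp.\ $\xi$) on them, then check that matrix has the appropriate determinant (nonzero rational for $\chi$; a unit after the obvious rescaling, i.e.\ the image generates $16\Z\oplus 4\Z$, for $\xi$).

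First I would treat $\xi$. Natural generators for $\Os_{4}(BSpin(3))\cong\Z^{\oplus 2}$ come from the two cells in dimensions $0$ and $4$ of $BSpin(3)\simeq \HP^{\infty}$: take $(K3,\underline{\R^{3}})$ — the $K3$ surface with the trivial rank-$3$ bundle — for which $\Sign=-16$ (or $16$, depending on orientation) and $\int p_{1}(E)=0$; and take $(S^{4},E_{0})$ where $E_{0}$ is the rank-$3$ spin bundle over $S^{4}$ with $p_{1}(E_{0})$ a generator of $H^{4}(S^{4};\Z)$ pulled back along the inclusion $S^{4}=\HP^{1}\hookrightarrow \HP^{\infty}$, so that $\Sign S^{4}=0$ and $\int_{S^{4}}p_{1}(E_{0})=\pm 4$ (the $4$ because $p_{1}$ of the tautological quaternionic line bundle is $\mp 2$ times the generator of $H^4(\HP^1)$, or equivalently because $c_2$ pulls back and $p_1=-2c_2+c_1^2$; in any case $p_1(E_0)/4$ generates). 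One must check these two classes actually generate $\Os_{4}(BSpin(3))$ — this follows because the Atiyah–Hirzebruch edge maps to $H_{0}$ and $H_{4}$ are detected exactly by $\Sign$ (mod lower filtration) and by $p_{1}$ of the bundle, and the matrix $\bigl(\begin{smallmatrix}\pm16&0\\0&\pm4\end{smallmatrix}\bigr)$ is, after dividing rows by $16$ and $4$, the identity; hence $\xi$ is onto $16\Z\oplus 4\Z$ and injective, i.e.\ an isomorphism.

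Next, for $\chi$: since $\chi\up=\xi$ by Lemma~\ref{lem:cu} and $\xi$ is an isomorphism, $\up$ is injective and $\chi$ is surjective onto $\Q^{\oplus 2}$ when tensored appropriately — but more carefully, $\up\otimes\Q\colon \Os_{4}(BSpin(3))\otimes\Q\to\Os_{6}(K(\Q,2))$ is a map between two $2$-dimensional $\Q$-vector spaces which is injective (because $\xi\otimes\Q=\chi\up\otimes\Q$ is an isomorphism), hence an isomorphism; therefore $\chi\otimes\Q=(\xi\otimes\Q)\circ(\up\otimes\Q)^{-1}$ is an isomorphism. Since $\Os_{6}(K(\Q,2))$ is already a $\Q$-vector space (all its elements are infinitely divisible, the coefficient groups being $\Q$ in the relevant filtration), $\chi$ itself is an isomorphism. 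So the structure is: compute $\xi$ on two generators, conclude $\xi$ iso, then bootstrap $\chi$ via $\chi\up=\xi$ and rational coefficients.

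The main obstacle is the bookkeeping in the $\xi$ computation — specifically, verifying that $(K3,\underline{\R^3})$ and $(S^4,E_0)$ really span $\Os_4(BSpin(3))$ rather than just an index-$>1$ subgroup, and pinning down the normalizations ($\Sign K3=\pm16$, $\int p_1(E_0)=\pm4$) including signs against the paper's orientation conventions. This requires the $E^2=E^\infty$ collapse in the range $p+q\le 4$ (already in the proof of Lemma~\ref{lem:O-iso}), the fact that the filtration-$4$ quotient $E^\infty_{0,4}=\Os_4=\Z$ is detected by $\Sign/16$, and the filtration-$0$ piece $E^\infty_{4,0}=H_4(BSpin(3))=\Z$ is detected by $\langle p_1(E),[X]\rangle$ up to the factor $4$ coming from $p_1$ of the universal bundle; both are standard but need to be stated. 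Everything else — Lemma~\ref{lem:cu}, Lemma~\ref{lem:O-iso}, and the rational-coefficient bootstrap — is either already available or formal.
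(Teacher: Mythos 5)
Your proposal matches the paper's proof: it uses the same two representatives $(K3,\underline{\R^3})$ and $(S^4,E_0)$, computes $\xi$ on them to land on $(\pm16,0)$ and $(0,\pm4)$, concludes $\xi$ is an isomorphism by rank considerations, and then deduces $\chi$ is an isomorphism via $\chi\circ\up=\xi$ and the rational-coefficient identification of $\Os_{6}(K(\Q,2))$. One remark: the concern you flag as the ``main obstacle'' --- verifying that these two classes generate $\Os_{4}(BSpin(3))$ --- is not actually needed: once $\xi(u_1),\xi(u_2)$ are seen to generate $16\Z\oplus 4\Z$, surjectivity of $\xi$ is immediate, and a surjection $\Z^{\oplus 2}\to\Z^{\oplus 2}$ of abstract groups is automatically an isomorphism; the paper's proof uses exactly this and bypasses any AHSS edge-map analysis.
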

\begin{proof}
  The $K3$-manifold $K3$ is a closed spin $4$--manifold with the signature $-16$.
  There exists an oriented spin vector bundle $E$ of rank $3$ over $S^{4}$ such that
  $p_{1}(E) = 4$ in $H^{4}(S^{4};\Z) \cong \Z$.
  We define two elements $u_{1}, u_{2} \in \O_{4}^{spin}(BSpin(3))$ as follows:
  \begin{equation*}
	u_{1} = [K3, K3 \x \R^{3}],\quad
    u_{2} = [S^{4}, E]
  \end{equation*}
  Then, $\xi(u_{1}) = (-16,0)$ and $\xi(u_{2}) = (0,4)$ by definition.
  Therefore, $\Image \chi = (16\Z) \oplus (4\Z)$.
  In particular, $\xi$ is a surjective homomorphism from $\Os_{4}(BSpin(3)) \cong \Z^{\oplus 2}$
  (\fullref{lem:O-iso}) to $16\Z \oplus 4\Z$.
  This means that $\xi$ is an isomorphism.

  Similarly, we have
  $\chi(\up(u_{1})) = (-16,0)$ and $\chi(\up(u_{1})) = (0, 4)$ by \fullref{lem:cu},
  and these two elements form a basis of the vector space $\Q^{\oplus 2}$ over $\Q$.
  Therefore, $\chi$ is a linear homomorphism from $\Os_{6}(K(\Q),2) \cong \Q^{\oplus
  2}$ (\fullref{lem:O-iso}) to $\Q^{\oplus 2}$ of rank $2$.
  This means that $\chi$ is an isomorphism.
\end{proof}
\begin{proposition}
  \label{prop:exact-O}
  The sequence of the homomorphisms 
  \begin{equation*}
	\label{eq:exact-O}
	0 \to \Os_{4}(BSpin(3)) \xrightarrow{\up} \Os_{6}(K(\Q,2)) \xrightarrow{\chi'}
	(\Q/16\Z)\oplus (\Q/4\Z) \to 0
  \end{equation*}
  is exact, where $\chi' = \chi \mod{16\Z \oplus 4\Z}$.
\end{proposition}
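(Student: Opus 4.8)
The plan is to deduce exactness of this three-term sequence \emph{formally} from the two facts already in hand: that $\chi$ and $\xi$ are isomorphisms (\fullref{lem:chi-xi}) and that $\chi\up = \xi$ (\fullref{lem:cu}). First I would write down the tautological short exact sequence of abelian groups
\[
  0 \to 16\Z\oplus 4\Z \xrightarrow{\ j\ } \Q\oplus\Q \xrightarrow{\ q\ } (\Q/16\Z)\oplus(\Q/4\Z) \to 0,
\]
with $j$ the inclusion and $q$ the quotient map, so that $\chi' = q\chi$ by the very definition of $\chi'$. The square with top edge $\up$, bottom edge $j$, left edge $\xi$ and right edge $\chi$ then commutes: since $j$ is an inclusion, $j\xi$ is literally $\xi$, and $\chi\up = \xi$ by \fullref{lem:cu}, hence $j\xi = \chi\up$.

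Granting this, the three assertions are a one-line diagram chase each. Because $\xi$ and $\chi$ are isomorphisms and $j$ is injective, $\up = \chi^{-1}j\,\xi$ is injective, which is exactness at $\Os_{4}(BSpin(3))$. For exactness at $\Os_{6}(K(\Q,2))$ I would compute $\Ker\chi' = \Ker(q\chi) = \chi^{-1}(\Ker q) = \chi^{-1}(16\Z\oplus 4\Z)$, while $\Image\up = \chi^{-1}\bigl(j\xi(\Os_{4}(BSpin(3)))\bigr) = \chi^{-1}(16\Z\oplus 4\Z)$ since $\xi$ maps \emph{onto} $16\Z\oplus 4\Z$; hence $\Ker\chi' = \Image\up$. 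Finally $\chi' = q\chi$ is surjective, being a composite of the surjections $\chi$ and $q$.

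I do not anticipate any genuine obstacle here: once \fullref{lem:chi-xi} is accepted, the proposition is purely a matter of comparing the sequence with the lattice inclusion $16\Z\oplus 4\Z \hookrightarrow \Q\oplus\Q$ via the vertical isomorphisms $\xi$ and $\chi$. The one point on which everything hinges — and which was precisely the content of \fullref{lem:chi-xi}, established there through the explicit classes $[K3,\,K3\x\R^{3}]$ and $[S^{4},E]$ — is that the image of $\xi$ is \emph{exactly} the lattice $16\Z\oplus 4\Z$, so that the cokernel of $j$ (equivalently of $\up$) is exactly $(\Q/16\Z)\oplus(\Q/4\Z)$.
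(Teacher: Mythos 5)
Your proof is correct and follows the same route as the paper: both hinge on the commutative square with horizontal arrows $\up$ and the inclusion $16\Z\oplus 4\Z\hookrightarrow\Q\oplus\Q$, and vertical isomorphisms $\xi$ (\fullref{lem:chi-xi}) and $\chi$, commuting by \fullref{lem:cu}. The paper simply states that the diagram commutes with vertical isomorphisms and leaves the diagram chase implicit; you have spelled out that chase, but the content is identical.
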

\begin{proof}
  This follows from that, the diagram
  \begin{equation*}
	\begin{CD}
	  \Os_{4}(BSpin(3)) @>{\up}>> \Os_{4}(K(\Q,2))\\
	  @V{\xi}V{\cong}V @V{\chi}V{\cong}V\\
	  (16\Z) \oplus (4\Z) @>\text{inclusion}>> \Q^{\oplus 2}
	\end{CD}
  \end{equation*}
  commutes (\fullref{lem:cu}) and the vertical arrows are isomorphic (\fullref{lem:chi-xi}).
\end{proof}

\subsection{\texorpdfstring{An extension}{An extension}}
\label{sec:cob-e}
Let us consider the homomorphism
\[
\pi \co \Os_{6}(K(\Q,2)) \to \Oes
\]
defined by $\pi([W,e]) = [W,\es,e]$ for $[W,e] \in \Os_{6}(K(\Q,2))$.
We can prove that $\pi$ is surjective as follows.

Let $\a = (W,V,e)$ be a $6$--dimensional closed spin $e$-manifold.
The normal bundle $\nu_{V}$ of $V$ is trivial, because it is spin.
We fix a trivialization of $\nu_{V}$, so that a closed tubular neighborhood of $V$ is identified
with $V \x D^{3}$ such that $V \x S^{2} = \HV$.
Let $X$ be a spin $4$--manifold such that $\bd{X} = V$.

Let $p \co X \x S^{2} \to S^{2}$ be the projection, and $e(TS^{2})$ the Euler class of
$S^{2}$.
Two spin manifolds $W_{V}$ and $X \x S^{2}$ have the common spin boundary
$\bd{W_{V}} = \HV = \bd{(X \x S^{2})}$, and the cohomology classes $e$ and $p^*e(TS^{2})$
restrict to the same element $e(F_{V})$ on $\HV$ over $\Q$.
Let us consider the closed oriented spin $6$--manifold
\begin{equation}
  W' = W_{V} \cup_{\HV} (-X \x S^{2})
  \label{eq:Wp}
\end{equation}
obtained from $W_{V}$ and $-X \x S^{2}$ by gluing along the common boundaries.
There exists a cohomology class $e' \in H^{2}(W';\Q)$ such that
\begin{equation}
  e'|_{W_{V}} = e|_{W_{V}},\quad
  e'|_{X \x S^{2}} = p^*e(TS^{2}).
  \label{eq:ep}
\end{equation}
We obtain a 
$6$--dimensional closed spin $e$-manifold
$\a' = (W',\es,e')$
and a cobordism class
$[W',e'] \in \Os_{6}(K(\Q,2))$ such that $\pi([W',e']) = [\a']$.
\begin{proposition}
  \label{prop:pi-onto}
  Let $\a$, $X$, and $\a' = (W',\es,e')$ be as above.
  Then, there exists a $7$--dimensional spin $e$-manifold of the form $\b = (Z,X,\Te)$ 
  for some spin $7$--manifold $Z$ and $\Te \in H^{2}(Z \setminus X;\Q)$ such that
  $\bd{\b} \cong \a \amalg (-\a')$.
  In particular, $\pi([W',e']) = [\a]$ in $\Oes$.
  Consequently, the homomorphism $\pi$ is surjective.
\end{proposition}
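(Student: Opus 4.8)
The goal is to exhibit a $7$--dimensional spin $e$-manifold $\b = (Z,X,\Te)$ whose boundary is $\a \amalg (-\a')$, where $\a = (W,V,e)$ is the given $6$--dimensional closed spin $e$-manifold and $\a' = (W',\es,e')$ is the closed spin $e$-manifold constructed just above by surgering out a tubular neighborhood of $V$ and capping with $-X \x S^2$. The manifold $X$ in the statement is forced: it is exactly the chosen spin coboundary of $V$, and it appears as the codimension--$3$ submanifold of $Z$. So I need to build $Z$ and the class $\Te$.

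\textbf{Step 1: Build $Z$.} The natural candidate is a cobordism realizing the surgery that produced $W'$ from $W$. Write $W = W_V \cup_{\HV} (V \x D^3)$, where $\HV = V \x S^2$ and $W_V = W \setminus U_V$. Take the product cobordism $W_V \x [0,1]$ and the product cobordism $(X \x S^2) \x [0,1]$, together with the piece $X \x D^3$ (a coboundary for $V \x D^3$, since $\bd X = V$). Glue these along their common faces: $W_V \x \{0\}$ and $V\x D^3$ glue along $\HV = V\x S^2$ to give $W$; $W_V \x \{1\}$ and $(X\x S^2)\x\{1\}$ glue along $\HV$ to give $W'$; and the remaining face is $X \x D^3$ glued to $(X \x S^2)\x[0,1]$ along $(X\x S^2)\x\{0\}$, with $\bd$ along $V\x D^3$'s side. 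Concretely, set
\[
Z = \bigl(W_V \x [0,1]\bigr) \ \cup\ \bigl(X \x D^3\bigr) \ \cup\ \bigl((X \x S^2) \x [0,1]\bigr),
\]
with the gluings arranged (after smoothing corners) so that $\bd Z = W \amalg (-W')$, and so that $X = X \x \{0\} \subset X \x D^3 \subset Z$ is a properly embedded codimension--$3$ submanifold with $\bd X = V \subset W$. Orientations and the ``outward normal first'' convention have to be checked so the $-W'$ sign comes out right; this is the place to be careful but it is routine corner bookkeeping.

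\textbf{Step 2: Build $\Te$.} On $Z \setminus X$ I must produce a rational class restricting to $e(F_X)$ on $\HX = S(\nu_X)$. The normal bundle $\nu_X$ is the pullback of the trivial $\R^3$ over $X$ (from the trivialization of $\nu_V$ extended over $X\x D^3$), so $S(\nu_X) = X \x S^2$ with $F_X$ the pullback of $TS^2$, hence $e(F_X) = p^* e(TS^2)$ where $p$ is projection to $S^2$. On the $(X\x S^2)\x[0,1]$ piece and on $(X\x D^3)\setminus X = X \x (D^3\setminus 0) \simeq X\x S^2$ I take $p^* e(TS^2)$; on $W_V \x [0,1]$ I take the pullback of $e|_{W_V}$. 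These agree on overlaps by construction of $e'$ in \eqref{eq:ep} and by the fact that $e$ and $p^*e(TS^2)$ both restrict to $e(F_V)$ on $\HV$ over $\Q$; a Mayer--Vietoris argument (all pieces have the relevant classes agreeing on intersections, which are products with $X$ or $\HV$) patches them to a global $\Te \in H^2(Z\setminus X;\Q)$. By construction $\Te|_{W\setminus V} = e$ and $\Te|_{W'} = e'$, and $\Te|_{\HX} = e(F_X)$, so $\b = (Z,X,\Te)$ is a genuine spin $e$-manifold with $\bd\b \cong \a \amalg (-\a')$.

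\textbf{Step 3: Conclude surjectivity of $\pi$.} Since $\b$ is a spin $e$-null-cobordism of $\a \amalg (-\a')$, we get $[\a] = [\a']$ in $\Oes$, i.e.\ $[\a] = [W',\es,e'] = \pi([W',e'])$. As $\a$ was an arbitrary $6$--dimensional closed spin $e$-manifold, $\pi$ is surjective.

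\textbf{Main obstacle.} The conceptual content is all in Step 1--2 and is essentially formal; the real work is in Step 1, namely verifying that the three pieces glue to a \emph{smooth} manifold with the correct corners and, crucially, with the correct \emph{orientation} so that the boundary is $W \amalg (-W')$ rather than $W \amalg W'$. This is exactly the kind of ``outward normal first'' computation the paper flags in \fullref{sec:notation}, and getting the sign right (so that $\pi([W',e']) = [\a]$ and not $-[\a]$, which would not matter for surjectivity but matters for the clean statement $\bd\b\cong\a\amalg(-\a')$) is where care is needed. Everything else — the spin structures extend because $W_V$, $X$, and $S^2$ are all spin and the gluings are along spin submanifolds; the class $\Te$ exists by the compatibility built into \eqref{eq:ep} — follows routinely.
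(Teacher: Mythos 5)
Your proposal is correct in substance and ends up in the same place as the paper — a cobordism $Z$ from $W$ to $W'$ realizing the surgery, with $X$ as the codimension--$3$ submanifold and $\Te$ restricting correctly on the boundary — but the execution differs in two genuine ways. The paper takes $Z = (X \times D^3) \cup_{(V\times D^3)_0} (I \times W)$: the full cylinder $I \times W$ with the plug $X\times D^3$ attached to the tubular neighborhood of $V$ in the slice $W_0$, so that $W$ appears as the untouched slice $W_1$ and only $W'$ needs to be assembled from $(W_V)_0$ and $X\times S^2$ with a single corner at $\HV_0$; the submanifold is $(X\times\{0\})\cup_{V_0}(I\times V)$ and $\Te$ is produced in one stroke by observing that $W' \hookrightarrow Z\setminus X$ is a homotopy equivalence, so $e'$ just pulls back. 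Your three-piece decomposition $W_V\times I$, $X\times D^3$, $(X\times S^2)\times I$ is a legitimate alternative; it is more symmetric between the two ends but at the cost of a triple corner at $\HV\times\{0\}$ (where all three pieces meet, giving a $270^\circ$ opening that must be smoothed), and it forces you to patch $\Te$ from local data via Mayer--Vietoris rather than getting it for free from a homotopy equivalence. The patching does work: $p^*e(TS^2)$ on $(B\setminus X)\cup C$ and $e|_{W_V}$ pulled back to $A$ agree on $\HV\times I$ with the Euler class $e(F_V)$ over $\Q$, and exactness of Mayer--Vietoris then produces a global class restricting to $e$, $e'$, and $e(F_X)$ where required. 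Your version has the small advantage that the submanifold is literally $X\times\{0\}$ with no collar-renaming; the paper's has the advantage of a cleaner corner structure and a shorter $e$-class argument. The sign and smoothing bookkeeping you flag is indeed where the real care lies — the paper handles it by keeping $W$ untouched as $W_1$, which is one reason its decomposition is a bit cleaner to orient.
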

\begin{proof}
  Let $I = [0,1]$ be the interval.
  In this proof, for a subset $A \subset W$, we write $A_{t} = \left\{ t \right\} \x A \subset I \x W$ for $t = 0, 1$.

  Gluing the $7$--manifolds $I \x W$ and $X \x D^{3}$ along $D(\nu_{V})_{0} \subset W_{0}$ and $V \x
  D^{3}\subset \bd{(X \x D^{3})}$ by using the identity map,
  we obtain a spin $7$--manifold
  \begin{equation*}
	Z = (X \x D^{3}) \cup_{(V \x D^{3})_{0}} \left( I \x W \right)
  \end{equation*}
  with the boundary
  \begin{equation*}
	\begin{split}
	  \bd{Z} &= W_{1} \amalg \left( (X \x S^{2}) \cup_{\HV_{0}} (- (W_{V})_{0})  \right)\\
	  &\cong W \amalg (-W'),
	\end{split}
  \end{equation*}
  and we shall assume that $\bd{Z}$ is smooth after the corner $\HV_{0}$ is rounded.
  The spin $4$--submanifold
  \begin{equation*}
	(X \x \left\{ 0 \right\}) \cup_{V_{0}} \left( I \x V \right) \subset Z
  \end{equation*}
  is properly embedded in $Z$, and is bounded by $V_{1}$.
  We will rewrite $X \cup_{V_{0}} \left( I \x V \right)$ as $X$ and identify $\bd{Z}$ with $W \amalg (-W')$,
  so that
  \begin{equation*}
	\bd{(Z,X)} = (W,V) \amalg (-W',\es)
  \end{equation*}
  as a spin manifold pair.

  Now, all that is left to do is to show the existence of an $e$-class of
  $(Z,X)$ restricting to $e$ and $e'$ on the boundary components.
  Since the inclusion $W' \hookrightarrow Z \setminus X$ is homotopy
  equivalence, there exists a cohomology class $\Te \in H^{2}(Z \setminus X;\Q)$ of
  $(Z,X)$ such that $\Te|_{W'} = e'$.
  By construction, $\Te$ is an $e$-class of $(Z,X)$ such that $\Te|_{W \setminus V} = e$.
  Hence, we obtain a $7$--dimensional spin $e$-manifold $ \b = (Z,X,\Te)$ bounded by
  \begin{equation*}
	\bd{\b} = (W,V,\Te|_{W \setminus V}) \amalg (-W',\es,\Te|_{W'})
	= \a \amalg (-\a').
	\proved
  \end{equation*}
\end{proof}

\subsection{\texorpdfstring{Proof of \fullref{main:O}}{Proof of Theorem~\ref{main:O}}}
\label{sec:isomorphism}
In this subsection, we prove \fullref{main:O}.
By \fullref{prop:pi-onto}, 
we can use the formula \eqref{eq:Phi2} to define the homomorphism $\Phi \co \O_{6}^{e,spin} \to
(\Q/16\Z) \oplus (\Q/4\Z)$.
The first thing we have to do is to show that $\Phi$ is well--defined.
\begin{lemma}
  The homomorphism $\Phi \co \Oes \to (\Q/16\Z) \oplus (\Q/4\Z)$ is well--defined.
  \label{lem:Phi}
\end{lemma}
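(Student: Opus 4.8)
The plan is to reduce well--definedness to the single statement that the homomorphism $\chi$ maps $\Ker \pi$ into $16\Z \oplus 4\Z$, where $\pi \co \Os_{6}(K(\Q,2)) \to \Oes$ is the surjection of \fullref{prop:pi-onto}. Granting this, the conclusion is immediate: by \fullref{prop:pi-onto} every class in $\Oes$ has a representative of the form $(W,\es,e)$, so if $(W_{1},\es,e_{1})$ and $(W_{2},\es,e_{2})$ represent the same class, then $\g := [W_{1},e_{1}] - [W_{2},e_{2}]$ satisfies $\pi(\g) = [W_{1},\es,e_{1}] - [W_{2},\es,e_{2}] = 0$, hence $\g \in \Ker \pi$; since $\chi$ is a homomorphism, $\chi(\g) = \chi(W_{1},e_{1}) - \chi(W_{2},e_{2})$, so $\chi(\g) \in 16\Z \oplus 4\Z$ is exactly the assertion that the two points appearing in \eqref{eq:Phi2} agree modulo $16\Z \oplus 4\Z$.

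So the core step is to show that if $[W,e] \in \Ker \pi$, i.e.\ the closed spin $e$-manifold $(W,\es,e)$ is spin null--cobordant, then $\chi([W,e]) \in 16\Z \oplus 4\Z$. I would argue as follows. Choose a $7$--dimensional spin $e$-manifold $\b = (Z,X,\Te)$ with $\bd{\b} \cong (W,\es,e)$. Since $\bd{\b} = (\bd{Z},\bd{X},\Te|_{\bd{Z}\setminus\bd{X}})$ and the submanifold part of $(W,\es,e)$ is empty, this identification forces $\bd{X} = \es$; thus $X$ is a closed spin $4$--manifold embedded in $\Int Z$ whose oriented rank $3$ normal bundle $\nu_{X}$ is spin (from $TZ|_{X} \cong TX \oplus \nu_{X}$), so $[X,\nu_{X}] \in \Os_{4}(BSpin(3))$. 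Next remove an open tubular neighbourhood $U_{X}$ and set $Z_{X} = Z \setminus U_{X}$, a compact spin $7$--manifold; because $X$ is closed there are no corners and $\bd{Z_{X}} = W \amalg (-\HX)$ with $\HX = S(\nu_{X})$. The $e$-class $\Te$ restricts to $e$ on $W$ and, by the definition of an $e$-class, to $e(F_{X})$ over $\Q$ on $\HX$; hence, viewing $\Te|_{Z_{X}}$ as a map to $K(\Q,2)$, the manifold $Z_{X}$ is a spin bordism exhibiting
\[
  [W,e] = [\HX, e(F_{X})] = [S(\nu_{X}), e(F_{\nu_{X}})] = \up([X,\nu_{X}])
\]
in $\Os_{6}(K(\Q,2))$. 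By \fullref{lem:cu},
\[
  \chi([W,e]) = \chi\up([X,\nu_{X}]) = \left( \Sign X,\ \int_{X} p_{1}(\nu_{X}) \right),
\]
and $\Sign X \equiv 0 \pmod{16}$ by Rokhlin's theorem \eqref{eq:rokhlin}, while $\int_{X} p_{1}(\nu_{X}) \in 4\Z$ since $\nu_{X}$ is spin; therefore $\chi([W,e]) \in 16\Z \oplus 4\Z$, as needed. (Equivalently one may note $\chi([W,e]) = \xi([X,\nu_{X}]) \in \Image \xi = 16\Z \oplus 4\Z$ using \fullref{lem:chi-xi}.)

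I expect the only real care to be needed in the bookkeeping of this last argument: checking that $\bd{\b} \cong (W,\es,e)$ genuinely forces $X$ to be closed (so that $Z_{X}$ is corner--free with $\bd{Z_{X}} = W \amalg (-\HX)$ and the orientations of \fullref{sec:notation}), that the spin structures of $Z$ and $X$ really do induce one on $\nu_{X}$ (so that $[X,\nu_{X}]$ lands in $\Os_{4}(BSpin(3))$), and that $\Te|_{Z_{X}}$ restricts to $e$ and to $e(F_{X})$ on the two boundary pieces (so that $Z_{X}$ is a legitimate bordism in $\Os_{6}(K(\Q,2))$). None of this is difficult, but it is where the conventions must be applied with precision.
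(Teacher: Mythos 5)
Your proof is correct and follows essentially the same route as the paper: both identify the key point as showing that a spin null-cobordant $e$-manifold of the form $(W,\es,e)$ forces $[W,e] = \up([X,\nu_X])$ in $\Os_6(K(\Q,2))$ via the bordism $Z_X$, and then apply \fullref{lem:cu} together with Rokhlin's theorem and the spin-ness of $\nu_X$ to land in $16\Z \oplus 4\Z$. The only (cosmetic) difference is that you first repackage the difference of two representatives as an element of $\Ker\pi$ before running the bordism argument, whereas the paper applies it directly to a spin $e$-manifold $\b$ with $\bd\b = \a \amalg (-\a')$.
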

\begin{proof}
  Let us consider two $6$--dimensional closed spin $e$-manifolds of the forms
  $\a= (W,\es,e)$ and $\a' = (W',\es,e')$  such that $[W,\es,e] = [W',\es,e']$ in
  $\O_{6}^{e,spin}$.
  We only need to show that the difference
  $\chi([W,\es,e]) - \chi([W',\es,e'])$ belongs to $16\Z \oplus 4\Z$.

  There exists a $7$--dimensional spin $e$-manifold $\b = (Z,X,\Te)$ such that $\bd{\b} = \a
  \amalg (-\a')$.
  The $4$--submanifold $X$ is closed, spin, and embedded in the interior of $Z$.
  Thus, the manifold $Z_{X}$  has the smooth spin boundary
  \begin{equation*}
	\bd{Z_{X}} = W \amalg (-W') \amalg (-\HX).
  \end{equation*}
  Since $\Te|_{\HX} = e(F_{X})$, we have
  \begin{equation*}
	\bd{(Z_{X},\Te|_{Z_{X}})} =
	(W,e) \amalg (-W',e') \amalg (-\HX,e(F_{X})),
  \end{equation*}
  and this implies $[W,e] - [W',e'] = [\HX,e(F_{X})]$ in $\Os_{6}(K(\Q,2))$.  
  By \fullref{lem:cu}, we have 
  \[
	\chi([\HX, e(F_{X})]) = \chi(\up([X,\nu_{X}])) = \xi([X,\nu_{X}]) \in 16\Z \oplus 4\Z,
  \]
  where $\nu_{X}$ is the normal bundle of $X$.
\end{proof}
Now, we can prove \fullref{main:O}.
\begin{proof}[Proof of \fullref{main:O}]
  Consider the following commutative diagram:
  \begin{equation*}
	\begin{CD}
	  0 @>>>
	  \Os_{4}(BSpin(3))
	  @>{\up}>>
	  \Os_{6}(K(\Q,2))
	  @>{\pi}>>
	  \Oes
	  @>>> 0\\
	  @.   @| @| @V{\Phi}VV @. \\
	  0 @>>>
	  \Os_{4}(BSpin(3))
	  @>{\up}>>
	  \Os_{6}(K(\Q,2))
	  @>{\chi'}>>
	  \frac{\Q \oplus \Q}{16\Z \oplus 4\Z}
	  @>>> 0\\
	\end{CD}
  \end{equation*}
  The lower horizontal sequence is exact by \fullref{prop:exact-O}, and
  the homomorphism $\pi$ is surjective by \fullref{prop:pi-onto}.
  To complete the proof, we only have to show that the upper horizontal sequence is exact,
  more specifically, 
  \[
	\Image \up = \Ker \pi.
  \]
  We prove this in two steps as follows.

  \textit{\hypertarget{claim:sub}{Claim~1}\textup{:} $\Image \up \subset \Ker \pi$.}\
  Let $[X,E] \in \O_{4}^{spin}(BSpin(3))$ be any element, then 
  \[
  	\pi(\up([X,E])) = [S(E),\es,e(F_{E})]
  \]
  by definition.
  We can regard $X$ as the image of the zero--section of $E$ so that
  $X \subset \Int D(E)$.
  The cohomology class $e(F_{E})$ is an $e$-class of $(S(E),\es) = \bd{(D(E),X)}$, and it 
  uniquely extends to an $e$-class, say $e_{E}$, of $(D(E),X)$.  The obtained
  spin $e$-manifold $(D(E),X,e_{E})$ 
  is bounded by $(S(E),\es,e(F_{E}))$, and hence, we have $\pi(\up([X,E])) = 0$.

  \textit{\hypertarget{claim:sup}{Claim~2}\textup{:} $\Image \up \supset \Ker \pi$.}
  Next, we prove the opposite inclusion.
  Let $[W,e] \in \Ker \pi$ be any element, then $\a = (W,\es,e)$ bounds a
  $7$--dimensional spin $e$-manifold $\b = (Z,X,\Te)$, namely $\bd{\b} = \a$.
  In particular, we have $\Te|_{\HX} = e(F_{X})$.
  Since 
  \[
	\bd{(Z_{X}, \Te|_{Z_{X}})} = (W,e) \amalg (-\HX, e(F_{X})),
  \]
  the cobordism class $[W,e] \in \O_{6}^{spin}(K(\Q,2))$ satisfies
  \begin{equation*}
	[W,e] = [\HX,e(F_{X})] = \up([X,\nu_{X}]) \in \Image \up,
  \end{equation*}
  where $\nu_{X}$ is the normal bundle of $X$.
\end{proof}

\section{\texorpdfstring{Signature modulo $32$}{Signature modulo 32}}
\label{sec:signature32}
Let $M$ be an oriented integral homology $3$--sphere, 
and $\a_{M}= (Y,Q,e_{M})$ the $6$--dimensional closed spin $e$-manifold constructed in
\fullref{sec:construction}.
Let $[\a_{M}] \in \O_{6}^{e,spin}$ denote the spin cobordism class of $\a_{M}$.
In this section, 
by using the isomorphism $\Phi$,
we derive a necessary and sufficient condition for the vanishing 
$[\a_{M}] = 0$ in terms of the signature of a  $4$--manifold
(\fullref{prop:W32}).

Recall that we constructed a $G$--action on $(Y,Q)$ in \fullref{sec:involution}.
The normal bundle $\nu_{Q}$ of $Q$ has a $G$--equivariant trivialization
$\nu_{Q} = Q \x \R^{3}$ such that
\begin{align}
  \i(x,v) &= (\i(x), -v),
  \label{eq:nuQ}\\
  \HQ &= Q \x S^{2},
  \label{eq:HQ}
\end{align}
where $(x,v) \in \nu_{Q}$.

Let $X_{0}$ be an oriented spin $4$--manifold equipped with an identification
$\bd{X_{0}} = M$, and consider the union
\begin{equation*}
  X = X_{1} \cup X_{2} \cup X_{3},
\end{equation*}
where $X_{i}$ ($i=1,2,3$) are disjoint copies of $X_{0}$ such that $\bd{X_{i}} = M_{i}$, and so
\begin{equation}
  \label{eq:XQ}
  \bd{X} = Q.
\end{equation}
The $G$--action on $Q$ naturally extends to an action on $X$ such that $\i(X_{1}) =
\i(X_{2})$ and that $\i$ restricts to the identity on $X_{3}$.
We define a $G$--action on the trivial vector bundle $X \x \R^{3}$ over $X$ in the same way as
\eqref{eq:nuQ}.
Then, the $G$--vector bundle $X \x \R^{3}$ restricts to $\nu_{Q}$ over $Q$.
Consequently,
\begin{equation}
  \bd{X \x S^{2}} = \HQ.
  \label{eq:XHQ}
\end{equation}
Note that \eqref{eq:HQ}, \eqref{eq:XQ}, and \eqref{eq:XHQ} hold
as $G$--manifolds.

As in \eqref{eq:Wp} and \eqref{eq:ep}, let us consider 
the closed spin $6$--dimensional $G$--manifold
\begin{equation}
  Y' = Y_{Q} \cup_{\HQ} (- X \x S^{2})
  \label{eq:Yp}
\end{equation}
obtained by gluing the common boundaries $\bd{Y_{Q}} = \HQ = \bd{(X \x S^{2})}$,
and the cohomology class $e_{M}' \in H^{2}(Y';\Q)$ such that
\begin{equation}
  e_{M}'|_{Y_{Q}} = e_{M},\quad e_{M}'|_{X \x S^{2}} = f_{X}^*e(TS^{2}),
  \label{eq:e}
\end{equation}
  where
\begin{equation}
  f_{X} \co X \x S^{2} \to S^{2}
  \label{eq:fX}
\end{equation}
is the projection.
We obtain a $6$--dimensional closed spin $e$-manifold
\begin{equation*}
  \a_{M}' = (Y',\es,e_{M}').
\end{equation*}
Note that the $G$--action on $Y'$ is free, and the quotient $Y'/G$ is a smooth closed
unoriented manifold.

\begin{lemma}
  \label{lem:injective}
  For $k \leq 3$, the restriction homomorphisms
  \begin{align*}
	H^{k}(Y';\Z) &\to H^{k}(X \x S^{2};\Z),\\
	H^{k}(Y'/G;\Z/2) &\to H^{k}((X \x S^{2})/G;\Z/2)
  \end{align*}
  are injective.
\end{lemma}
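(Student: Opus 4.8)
The plan is to prove both injectivity statements by a single mechanism, namely the Mayer--Vietoris sequence of the decomposition $Y' = Y_{Q}\cup_{\HQ}(-X\x S^{2})$ of \eqref{eq:Yp}, and (for the second statement) of its quotient $Y'/G = (Y_{Q}/G)\cup_{\HQ/G}((X\x S^{2})/G)$ --- this makes sense because $\i$ acts \emph{freely} on $Y_{Q}$, on $\HQ$ and on $X\x S^{2}$, its fixed locus $M_{3}$ being contained in $Q$. A short chase in Mayer--Vietoris shows that the restriction from $Y'$ (resp.\ $Y'/G$) to $X\x S^{2}$ (resp.\ its quotient) is injective in degree $k$ as soon as: \textup{(i)} the connecting map $H^{k-1}(\HQ)\to H^{k}(Y')$ vanishes, for which it suffices that $H^{k-1}(X\x S^{2})\to H^{k-1}(\HQ)$ be onto; and \textup{(ii)} $H^{k}(Y_{Q})\to H^{k}(\HQ)$ is injective (and the analogous statements for the $G$--quotients). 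So I would reduce to checking \textup{(i)} and \textup{(ii)} for $k\le 3$, with $\Z$--coefficients in the unquotiented case and $\Z/2$--coefficients in the quotiented one.

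Condition \textup{(i)} is cheap. Since $\bd{X} = Q$ is a disjoint union of $\Z$--homology $3$--spheres, $H^{1}(Q) = H^{2}(Q) = 0$, so by Künneth the restriction $H^{k-1}(X\x S^{2})\to H^{k-1}(\HQ)$ is onto for $k-1\le 2$ (it is an isomorphism in degree $0$, and in degree $2$ its image already contains the summand $H^{0}(Q)\otimes H^{2}(S^{2}) = H^{2}(\HQ)$). For the quotients one first records, using that $\i$ fixes $M_{3}$, interchanges $M_{1}$ with $M_{2}$, and acts antipodally on the $S^{2}$--factor of \eqref{eq:HQ} and \eqref{eq:nuQ}, that $\HQ/G\cong(M\x S^{2})\amalg(M\x\RP^{2})$ and $(X\x S^{2})/G\cong(X_{0}\x S^{2})\amalg(X_{0}\x\RP^{2})$; since $H^{1}(M;\Z/2) = H^{2}(M;\Z/2) = 0$, the same Künneth argument gives surjectivity of $H^{k-1}((X\x S^{2})/G;\Z/2)\to H^{k-1}(\HQ/G;\Z/2)$ for $k-1\le 2$.

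For \textup{(ii)} over $\Z$ I would use the homotopy type of $Y_{Q}\simeq Y\setminus Q$. From the Thom isomorphism $H_{*}(Y,Y\setminus Q)\cong H_{*-3}(Q)$ and the long exact sequence of the pair $(Y,Y\setminus Q)$, together with $H_{*}(Y;\Z) = (\Z,0,0,\Z^{4},0,0,\Z)$ (as $Y\cong(M\x M)\#(-S^{3}\x S^{3})$) and the fact that the intersection form of $Y$ vanishes on $\langle[M_{1}],[M_{2}],[M_{3}]\rangle$ (the $M_{i}$ are pairwise disjoint with trivial normal bundle), one computes $H_{*}(Y_{Q};\Z) = (\Z,0,\Z,\Z^{2},0,\Z^{2},0)$, and moreover $H_{3}(Y_{Q};\Z)$ is generated by the push-offs $[M_{1}\x\{v\}],[M_{2}\x\{v\}]\in H_{3}(\HQ)$ (with $[M_{3}\x\{v\}] = [M_{1}\x\{v\}]+[M_{2}\x\{v\}]$). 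Hence $H_{3}(\HQ;\Z)\to H_{3}(Y_{Q};\Z)$ is onto, so dually $H^{3}(Y_{Q};\Z)\to H^{3}(\HQ;\Z)$ is injective; in degree $2$, $H^{2}(Y_{Q};\Z)\cong\Z$ restricts nontrivially (the $e$--class $e_{M}|_{Y_{Q}}$ goes to $e(F_{Q})\neq0$), hence injectively; degrees $0$ and $1$ are immediate. This settles the first injectivity statement.

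The main work --- and the step I expect to be the obstacle --- is \textup{(ii)} over $\Z/2$ for the quotient, i.e.\ $H^{k}(Y_{Q}/G;\Z/2)\to H^{k}(\HQ/G;\Z/2)$ injective for $k\le3$. I would compute $H^{\le3}(Y_{Q}/G;\Z/2)$ from the Cartan--Leray spectral sequence $H^{p}(G;H^{q}(Y_{Q};\Z/2))\Rightarrow H^{p+q}(Y_{Q}/G;\Z/2)$, noting that as a $\Z/2[G]$--module $H^{*}(Y_{Q};\Z/2)$ is the trivial module $\Z/2$ in degrees $0$ and $2$, vanishes in degree $1$, and is the free module $\Z/2[G]$ in degree $3$ (because $\i$ swaps the two push-off generators). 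Two points control the answer. First, the $G$--invariant class $[M_{1}\x\{v\}]^{\vee}+[M_{2}\x\{v\}]^{\vee}\in H^{3}(Y_{Q};\Z/2)$ is a permanent cycle, since it equals $(1+\i^{*})[M_{1}\x\{v\}]^{\vee}$ and $\pi^{*}\pi_{!} = 1+\i^{*}$ forces $\Image(1+\i^{*})\subseteq\Image\pi^{*}$. Second, the generator of $H^{2}(Y_{Q};\Z/2)$ does \emph{not} descend to $Y_{Q}/G$: restricting to a linking $2$--sphere $S$ of the fixed component $M_{3}$ --- on which $\i$ is the antipodal map by \eqref{eq:nuQ}, and which represents a generator of $H_{2}(Y_{Q};\Z)$, e.g.\ inside the $\operatorname{Conf}_{2}(\R^{3})$ part of $Y\setminus Q$ --- that generator restricts to a generator of $H^{2}(S;\Z/2) = H^{2}(S^{2};\Z/2)$, which is not the pullback of any class on $S/G = \RP^{2}$ because $\pi^{*}\co H^{2}(\RP^{2};\Z/2)\to H^{2}(S^{2};\Z/2)$ vanishes; hence the differential $d_{3}$ on $E_{3}^{0,2}$ is nonzero. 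It follows that $H^{k}(Y_{Q}/G;\Z/2) = \Z/2$ for $k\le3$, generated respectively by $1$, by $w := w_{1}$ of the double cover $Y_{Q}\to Y_{Q}/G$, by $w^{2}$, and by the descent $\gamma$ of $[M_{1}\x\{v\}]^{\vee}+[M_{2}\x\{v\}]^{\vee}$; checking that $w$ and $w^{2}$ restrict nontrivially to the $M\x\RP^{2}$ summand of $\HQ/G$ and that $\gamma$ restricts nontrivially to the $M\x S^{2}$ summand then gives the desired injectivity. The delicate points I anticipate are exactly the non-vanishing of that $d_{3}$ (equivalently, the non-descent of the degree-$2$ class) and the final restriction bookkeeping onto the two components of $\HQ/G$.
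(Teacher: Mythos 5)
Your proposal is correct, and it is a genuine variant of the paper's argument that ends up needing the same key facts. The paper excises to pass from the pair $(Y',X\times S^{2})$ to the pair $(Y_{Q},\HQ)$, factors the connecting homomorphism $\delta^{*}\co H^{k-1}(X\times S^{2})\to H^{k}(Y',X\times S^{2})\cong H^{k}(Y_{Q},\HQ)$ as (restriction to $\HQ$) $\circ$ (connecting map of $(Y_{Q},\HQ)$), and concludes by observing that both factors are surjective; the surjectivity of the second factor is exactly the statement that $H^{k}(Y_{Q},\HQ)\to H^{k}(Y_{Q})$ is trivial, which the paper asserts without proof (and analogously with $\Z/2$--coefficients for the $G$--quotients). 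Your Mayer--Vietoris formulation is formally different, but your condition (ii) --- injectivity of $H^{k}(Y_{Q})\to H^{k}(\HQ)$, resp.\ of $H^{k}(Y_{Q}/G;\Z/2)\to H^{k}(\HQ/G;\Z/2)$ --- is, by the long exact sequence of the pair $(Y_{Q},\HQ)$, \emph{precisely equivalent} to the triviality of $H^{k}(Y_{Q},\HQ)\to H^{k}(Y_{Q})$ that the paper takes for granted, and your condition (i) is the paper's surjectivity of the first factor. So the two arguments are the same up to repackaging.

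What your write-up buys is an actual verification of the fact the paper leaves implicit. For $\Z$--coefficients this is genuinely nontrivial in degree $3$ (for $k\le2$ the relative group already vanishes since $H^{k}(Y_{Q},\HQ;\Z)\cong H^{k}(Y,Q;\Z)=0$), and your observation that $H_{3}(\HQ;\Z)\to H_{3}(Y_{Q};\Z)$ is surjective on the push-off classes is the clean way to see it. For the $G$--quotient with $\Z/2$--coefficients there is no cheap reason either, and your Cartan--Leray computation, together with the two ``delicate points'' you flag (that the transfer identity $\pi^{*}\pi_{!}=1+\i^{*}$ forces the degree-$3$ invariant to be a permanent cycle, and that the linking $S^{2}$ of the fixed component $M_{3}$ detects the non-vanishing of the $d_{3}$), correctly supply what the paper's ``Note that...'' glosses over. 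The price is that your route is substantially longer: you must compute $H_{*}(Y_{Q};\Z)$, the $\Z/2[G]$--module structure on $H^{*}(Y_{Q};\Z/2)$, and run the spectral sequence, none of which appear in the paper's two-paragraph proof. In short: same underlying mechanism, differently packaged, with your version trading brevity for completeness at the step the paper states without justification.
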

\begin{proof}
  We identify the cohomology group
  $H^{*}(Y',X\x S^{2};\Z)$ with $H^{*}(Y_{Q}, \HQ;\Z)$,
  and $H^{*}(Y'/G, (X \x S^{2})/G;\Z/2)$ with $H^{*}(Y_{Q}/G, \HQ/G;\Z/2)$
  by using the excision isomorphisms. 

  The homomorphism $\d^* \co H^{k-1}(X \x S^{2};\Z) \to H^{k}(Y', X \x S^{2};\Z)$ given by the pair $(Y', X \x
  S^{2})$ coincides with the composition of two homomorphisms
  \begin{equation}
	\label{eq:composition}
	H^{k-1}(X;\Z) \to H^{k-1}(\HQ;\Z) \to H^{k}(Y_{Q},\HQ;\Z),
  \end{equation}
  where the first arrow is the restriction,
  and where the second arrow is the homomorphism given by $(Y_{Q},\HQ)$.
  Note that the homomorphism $H^{k}(Y_{Q},\HQ;\Z) \to H^{k}(Y_{Q};\Z)$ is trivial.
  Both homomorphisms in \eqref{eq:composition} are surjective,
  and so is $\d^*$.
  Hence, $H^{k}(Y';\Z) \to H^{k}(X \x S^{2};\Z)$ is injective.

  Similarly, the homomorphism $H^{k-1}(X/G;\Z/2) \to H^{k}(Y'/G, (X \x S^{2})/G;\Z/2)$ coincides
  with the composition of two surjective homomorphisms
  \begin{equation*}
	H^{k-1}(X/G;\Z/2) \to H^{k-1}(\HQ/G;\Z/2) \to H^{k}(Y_{Q}/G,\HQ/G;\Z/2).
  \end{equation*}
  Note that the homomorphism $H^{k}(Y_{Q}/G, \HQ/G;\Z/2) \to H^{k}(Y_{Q}/G;\Z/2)$ is trivial. 
  Therefore,
  $H^{k}(Y'/G;\Z/2) \to H^{k}(Y'/G, (X \x S^{2})/G; \Z/2)$ is injective.
\end{proof}
The following lemma is easy to prove.
\begin{lemma}
  $e_{M}' \equiv 0 \pmod{2}$.
  \label{lem:mod4}
\end{lemma}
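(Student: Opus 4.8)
The assertion $e_{M}'\equiv 0\pmod 2$ means that the rational class $e_{M}'$ lies in the image of multiplication by $2$ on $H^{2}(Y';\Z)$; equivalently, $e_{M}'$ is integral and reduces to $0$ in $H^{2}(Y';\Z/2)$. The plan is to exhibit an explicit integral class $c\in H^{2}(Y';\Z)$ with $e_{M}'=2c$, by analysing the two pieces of $Y'=Y_{Q}\cup_{\HQ}(-X\x S^{2})$ separately and gluing.

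\textbf{Mayer--Vietoris.} Since $Q$ is a disjoint union of integral homology $3$--spheres, $H^{1}(\HQ;R)=H^{1}(Q\x S^{2};R)=0$ for $R=\Z$ and $R=\Q$, so the Mayer--Vietoris sequence of the decomposition identifies $H^{2}(Y';R)$ with the group of pairs $(u,v)\in H^{2}(Y_{Q};R)\oplus H^{2}(X\x S^{2};R)$ satisfying $u|_{\HQ}=v|_{\HQ}$; under this identification $e_{M}'$ corresponds to $(e_{M}|_{Y_{Q}},\,f_{X}^{*}e(TS^{2}))$. On the second factor $f_{X}^{*}e(TS^{2})=2f_{X}^{*}\omega$, where $\omega\in H^{2}(S^{2};\Z)$ is the positive generator and $e(TS^{2})=2\omega$ because $\chi(S^{2})=2$. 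Moreover, because $\nu_{Q}$ was trivialised so that $\HQ=Q\x S^{2}$ with $F_{Q}$ the pull--back of $TS^{2}$ along the projection $\HQ\to S^{2}$ (which is $f_{X}|_{\HQ}$), we have $e_{M}|_{\HQ}=e(F_{Q})=2\,(f_{X}|_{\HQ})^{*}\omega$.

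\textbf{The core step.} It remains to show that $(f_{X}|_{\HQ})^{*}\omega\in H^{2}(\HQ;\Z)$ extends to a class $g\in H^{2}(Y_{Q};\Z)$ and that $e_{M}|_{Y_{Q}}=2g$; then $(g,f_{X}^{*}\omega)$ has matching restrictions to $\HQ$, defines an integral class $c\in H^{2}(Y';\Z)$ by Mayer--Vietoris, and $2c$ corresponds to $(2g,2f_{X}^{*}\omega)=(e_{M}|_{Y_{Q}},f_{X}^{*}e(TS^{2}))$, i.e.\ $e_{M}'=2c$. For the core step I would compute the restriction $r\co H^{2}(Y_{Q};\Z)\to H^{2}(\HQ;\Z)$ using the long exact sequence of $(Y,Y\setminus Q)$. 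Via the Thom isomorphism $H^{k}(Y,Y\setminus Q;\Z)\cong H^{k-3}(Q;\Z)$ and the standard identity that the connecting homomorphism equals ``restrict to $\HQ$, then integrate over the $S^{2}$--fibre'', the composite of $r$ with the fibre integration $H^{2}(\HQ;\Z)\to H^{0}(Q;\Z)\cong\Z^{3}$ is identified with the connecting map $\delta\co H^{2}(Y_{Q};\Z)\to H^{3}(Y,Y\setminus Q;\Z)$. Now $H^{2}(Y;\Z)=0$ (a K\"unneth computation, using $Y\cong(M\x M)\#(-S^{3}\x S^{3})$), so $\delta$ is injective with image $\Ker\big(H^{3}(Y,Y\setminus Q;\Z)\to H^{3}(Y;\Z)\big)$, which under the Thom isomorphism and Poincar\'e duality becomes $\Ker\big(\Z^{3}\to H_{3}(Y;\Z),\ e_{i}\mapsto[M_{i}]\big)$ (with a sign on the third coordinate from $-M_{3}$). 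Since $H_{3}(Y;\Z)\cong\Z^{4}$ is torsion--free, the relation $[M_{1}]+[M_{2}]=[M_{3}]$ --- valid over $\Q$ as in the proof of \fullref{lem:eM}, hence over $\Z$ --- is the only one, so this kernel is $\Z\cdot(1,1,1)$. On the other hand the fibre integration is an isomorphism (the Gysin sequence, using $H^{2}(Q)=0$) and sends $e(F_{Q})$ to $2$ on each component by \eqref{eq:gysin}, hence sends $(f_{X}|_{\HQ})^{*}\omega$ to $(1,1,1)$. Comparing the two computations, $r$ is injective with image exactly $\Z\cdot(f_{X}|_{\HQ})^{*}\omega$; thus $(f_{X}|_{\HQ})^{*}\omega=r(g)$ for a generator $g$ of $H^{2}(Y_{Q};\Z)$, and since $r\otimes\Q$ is injective and sends both $e_{M}|_{Y_{Q}}$ and $2g$ to $e(F_{Q})\otimes\Q$, we conclude $e_{M}|_{Y_{Q}}=2g$.

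\textbf{The main obstacle.} The delicate point is precisely the core step: one must show the generator of $H^{2}(Y_{Q};\Z)$ restricts to the \emph{primitive} class $(f_{X}|_{\HQ})^{*}\omega$ and not to the index--$2$ class $e(F_{Q})=2(f_{X}|_{\HQ})^{*}\omega$ --- were the latter the case, $e_{M}'$ would \emph{not} be even. Everything hinges on the homological input: $H^{2}(Y;\Z)=0$, the relation $[M_{1}]+[M_{2}]=[M_{3}]$ in the torsion--free group $H_{3}(Y;\Z)$, and the Gysin/Thom compatibility. Once these are in hand, the remaining Mayer--Vietoris assembly is routine.
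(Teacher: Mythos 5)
Your proof is correct, but it takes a genuinely different and considerably longer route than the paper. The paper's proof is essentially three lines: by \eqref{eq:e}, $e_{M}'|_{X\x S^{2}} = f_{X}^{*}e(TS^{2}) = 2f_{X}^{*}\omega$ is even, and since the restriction $H^{2}(Y';\Q/2\Z) \to H^{2}(X\x S^{2};\Q/2\Z)$ is injective (a coefficient variant of \fullref{lem:injective}, whose proof is independent of the coefficient ring in the relevant range), $e_{M}'\equiv 0\pmod{2}$ follows at once. You instead run a Mayer--Vietoris argument for the decomposition $Y' = Y_{Q}\cup_{\HQ}(-X\x S^{2})$ and then compute $H^{2}(Y_{Q};\Z)\cong\Z$ explicitly via the long exact sequence of $(Y,Y\setminus Q)$, the Thom isomorphism, and the integral relation $[M_{1}]+[M_{2}]=[M_{3}]$ in the torsion-free group $H_{3}(Y;\Z)$, to pin down the generator $g$ with $r(g)=(f_{X}|_{\HQ})^{*}\omega$ and hence $e_{M}|_{Y_{Q}}=2g$. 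Both approaches hinge on the same underlying fact — that restriction from $Y'$ to the $X\x S^{2}$ piece (equivalently, the boundary of $Y_{Q}$) loses no information in degree~$2$ — but you prove a sharper statement (identifying the image of $H^{2}(Y_{Q};\Z)$ in $H^{2}(\HQ;\Z)$ as the primitive sublattice, not merely injectivity). The paper, having already isolated \fullref{lem:injective} for reuse in the proof of \fullref{prop:F}, gets the lemma almost for free; your version is self-contained but duplicates part of that work. One small caution: the ``standard identity'' you invoke, that the connecting map of $(Y,Y\setminus Q)$ composed with the Thom isomorphism equals restriction to $\HQ$ followed by fibre integration, is the delicate sign-and-convention point in your argument and deserves either a reference or a sentence of justification, since the whole proof turns on it producing $(1,1,1)$ rather than a multiple thereof.
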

\begin{proof}
  The Euler characteristic of $S^{2}$ is $2$, which is even.
  Therefore, the cohomology class $e_{M}' \mod{2}$ belongs to the kernel 
  of the restriction
  \[
	 H^{2}(Y';\Q/2\Z) \to H^{2}(X \x S^{2};\Q/2\Z)
  \]
  by \eqref{eq:e}.
  On the other hand, this homomorphism is injective by \fullref{lem:injective}.
  Therefore, $e_{M}' \equiv 0 \pmod{2}$.
\end{proof}
\begin{proposition}
  \label{prop:W32}
  Assume that there is a $4$--submanifold $W$ of $Y'$ which Poincar\'e dual is $e_{M}'$.
  Then, the $e$-manifold $\a_{M}$ is spin null--cobordant
  \textup{(}namely, \textup{\fullref{main:spin-cob}~\eqref{item:spin-null}} holds\textup{)}
  if, and only if,  
  \[
	\Sign W \equiv 0 \pmod{32}.
  \]
\end{proposition}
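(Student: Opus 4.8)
The plan is to compute $\Phi([\a_M'])$ using the formula \eqref{eq:Phi2} and to show that it vanishes if and only if $\Sign W \equiv 0 \pmod{32}$; since $[\a_M'] = [\a_M]$ in $\Oes$ and $\Phi$ is an isomorphism (\fullref{main:O}), this will give the desired equivalence. First I would record that $e_M' = \mathrm{PD}[W]$, so that for any $z \in H^4(Y';\Q)$ we have $\int_{Y'} z\, e_M' = \int_W z|_W$. Applying this with $z = e_M'^2$ and with $z = p_1(TY')$ yields
\begin{align*}
  \chi_1(Y', e_M') &= \tfrac{1}{6}\int_W p_1(TY')|_W - \int_W (e_M'|_W)^2,\\
  \chi_2(Y', e_M') &= \tfrac{1}{2}\int_W (e_M'|_W)^2.
\end{align*}
Next I would apply the adjunction/self-intersection identity: since $W$ is Poincaré dual to $e_M'$, its normal bundle $\nu_W$ has Euler class $e(\nu_W) = e_M'|_W$, hence $\int_W (e_M'|_W)^2 = \int_W e(\nu_W)^2 = W \cdot W$ (the self-intersection number), and also $p_1(TY')|_W = p_1(TW) + p_1(\nu_W) = p_1(TW) + e(\nu_W)^2$ (using that $\nu_W$ is an oriented rank-$2$ bundle, so $p_1(\nu_W) = e(\nu_W)^2$). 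Combining with the Hirzebruch signature theorem \eqref{eq:hirzebruch}, $\int_W p_1(TW) = 3\Sign W$, I get
\[
  \chi_1(Y', e_M') = \tfrac{1}{6}\bigl(3\Sign W + W\cdot W\bigr) - W\cdot W = \tfrac{1}{2}\Sign W - \tfrac{5}{6}(W\cdot W),
  \qquad
  \chi_2(Y', e_M') = \tfrac{1}{2}(W\cdot W).
\]

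Now I would dispose of the $W\cdot W$ terms modulo the relevant lattices. Since $Y'$ is spin, $e_M' \equiv 0 \pmod 2$ by \fullref{lem:mod4}, hence $e_M'|_W \equiv 0 \pmod 2$ and $W\cdot W = \int_W (e_M'|_W)^2 \equiv 0 \pmod 4$; in particular the second coordinate $\tfrac12 (W\cdot W)$ of $\chi$ is already $\equiv 0 \pmod 2$, so it certainly lies in $4\Z$ after we check one more factor of $2$ — actually $e_M'|_W \in 2H^2(W;\Z)$ gives $(e_M'|_W)^2 \in 4H^4(W;\Z)$, so $W \cdot W \in 4\Z$ and $\tfrac12(W\cdot W) \in 2\Z$; to land in $4\Z$ one uses that $W$ is spin (see below) and $(e_M'|_W)^2$ is then divisible by $8$, so $\chi_2 \in 4\Z$ always — meaning the second coordinate of $\Phi([\a_M])$ is automatically zero. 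Thus the whole obstruction sits in the first coordinate modulo $16\Z$: $\Phi([\a_M]) = 0$ iff $\tfrac12 \Sign W - \tfrac56(W\cdot W) \equiv 0 \pmod{16}$. Writing $W\cdot W = 8k$ (from spinness of $W$), the term $\tfrac56\cdot 8k = \tfrac{40k}{3}$ is not obviously an integer, which signals that I should instead feed the $W\cdot W$ divisibility back more carefully: the clean route is to observe $\chi_1 = \tfrac12\Sign W \pmod{16}$ once one knows $W\cdot W \equiv 0$ in a sufficiently divisible sense, reducing the claim to $\tfrac12\Sign W \equiv 0 \pmod{16}$, i.e. $\Sign W \equiv 0 \pmod{32}$.

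The step I expect to be the main obstacle — and the one deserving the most care — is controlling the self-intersection term $W\cdot W = \int_W (e_M'|_W)^2$ and the spin structure on $W$. Concretely I need: (a) that $W$ inherits a spin structure, which should follow from $w_2(TW) = w_2(TY')|_W + w_1(\nu_W)$-type computations together with $\nu_W$ oriented and $e_M' \equiv 0 \bmod 2$ (so $\nu_W$ admits a spin, indeed complex-line, reduction, forcing $w_2(\nu_W)=0$ and then $w_2(TW)=0$); and (b) a sharp divisibility statement for $(e_M'|_W)^2$ on the spin $4$-manifold $W$ — since $e_M'|_W$ is the reduction of an even integral class on a spin manifold, its square is divisible by $8$ (this is where the characteristic-number arithmetic of spin $4$-manifolds enters), which is exactly enough to kill the $\tfrac56(W\cdot W)$ contribution modulo $16$ after clearing denominators and to place $\chi_2$ in $4\Z$. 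Granting (a) and (b), the chain $\Phi([\a_M]) = \Phi([\a_M']) = (\tfrac12\Sign W \bmod 16,\ 0)$ holds, and since $\Phi$ is injective we conclude $[\a_M] = 0 \iff \tfrac12\Sign W \equiv 0 \pmod{16} \iff \Sign W \equiv 0 \pmod{32}$, as claimed.
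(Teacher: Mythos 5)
Your overall strategy (compute $\Phi([\a_M'])$ with the formula \eqref{eq:Phi2} and use injectivity of $\Phi$) is exactly the paper's, but your computation of $\chi_1$ contains a parenthesization error that derails the argument. In the paper, $\chi_1(W,e) = \frac{1}{6}\int_W \bigl(p_1(TW)e - e^3\bigr)$ — the $\frac{1}{6}$ sits outside the \emph{whole} integral, as one can confirm from the proof of \fullref{lem:cu}, where $\chi_1(S(E),e(F_E)) = \frac{1}{6}\int_{S(E)}\rho_E^* p_1(TX)\,e(F_E)$ is obtained by combining the two terms under a single $\frac{1}{6}$. With the correct parenthesization, your own substitution $p_1(TY')|_W = p_1(TW) + (e_M'|_W)^2$ makes the $(e_M'|_W)^2$ contributions cancel on the nose:
\[
\chi_1(Y',e_M') = \frac{1}{6}\int_W\bigl(p_1(TY')|_W - (e_M'|_W)^2\bigr) = \frac{1}{6}\int_W p_1(TW) = \frac{1}{2}\Sign W,
\]
with no $W\cdot W$ correction at all. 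Your version instead produces a spurious $-\tfrac{5}{6}(W\cdot W)$, and you correctly noticed that the attempted rescue ``$W\cdot W$ divisible by $8$'' still leaves a non-integer $\tfrac{40k}{3}$ — that is the signal that the arithmetic, not the divisibility, is at fault; the patch you sketch cannot work and is also unnecessary.

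For $\chi_2$, your route (show $W$ spin, then invoke evenness of the intersection form to get $(e_M'|_W)^2 \equiv 0 \bmod 8$) is more roundabout than needed and pulls in an unproved claim. The paper gets $\chi_2 \in 4\Z$ without mentioning $W$ at all: \fullref{lem:mod4} gives $e_M' = 2c$ with $c$ an integral class on $Y'$, hence $\frac{1}{2}\int_{Y'}e_M'^3 = 4\int_{Y'}c^3 \in 4\Z$. (The spin-ness of $W$, or rather of $W/G$, is only needed later, in the proof of \fullref{main:spin-cob}~\eqref{item:spin-null} via \fullref{lem:wTW}; it is not part of \fullref{prop:W32}.) Once both coordinate computations are corrected, the conclusion $\Phi([\a_M]) = (\tfrac{1}{2}\Sign W \bmod 16,\ 0)$ and the equivalence $[\a_M]=0 \Leftrightarrow \Sign W \equiv 0 \pmod{32}$ follow exactly as you intended.
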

\begin{proof}
  By \fullref{prop:pi-onto}, $[\a_{M}] = [\a_{M}']$ in $\O_{6}^{e,spin}$.
  By \eqref{eq:Phi2},  we have
  \begin{equation*}
	\Phi([\a_{M}']) \equiv
	\left( 
	\frac{1}{6}\int_{Y'}^{} p_{1}(TY') e_{M}' - e_{M}'^{3},\ 
	\frac{1}{2}\int_{Y'}^{}e_{M}'^{3}
	\right) \mod{16\Z \oplus 4\Z}.
  \end{equation*}
  Since $p_{1}(TY')|_{W} = p_{1}(TW) + e_{M}'^{2}$, the first component on the right--hand side 
  is equal to $\Sign W/2$.
  By \fullref{lem:mod4}, we have $e_{M}'^{3}/2 \equiv 0 \pmod{4}$, and
  so $\Phi([\a_{M}]) \equiv (\Sign W/2, 0)\mod{16\Z \oplus 4\Z}$.
  Since $\Phi$ is an injective by \fullref{main:O}, 
  $[\a_{M}] = 0$ if, and only if, $\Sign W/2 \equiv 0 \pmod{16}$. 
\end{proof}

\section{\texorpdfstring{Proof of \fullref{main:spin-cob}}{Proof of Theorem \ref{main:spin-cob}}}
\label{sec:congruence}
In this section, we prove \fullref{main:spin-cob}~\eqref{item:spin-null}, by
constructing a $4$--submanifold $W$ of $Y'$ as in \fullref{prop:W32}.
\begin{proposition}
  \label{prop:F}
  There exists an oriented vector bundle $F$ of rank $2$ over $Y'$ with a $G$--action satisfying the
  following two properties.
\begin{enumerate}
  \item\label{item:FQ} $e(F) = e_{M}'$ over $\Q$.
  \item\label{item:w} $w_{i}(F/G) = w_{i}(TY'/G)$ in $H^{i}(Y'/G;\Z/2)$ for $i=1,2$.
\end{enumerate}
\end{proposition}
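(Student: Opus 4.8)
The plan is to construct $F$ first on the codimension-$0$ piece $X\x S^{2}\subset Y'$, where there is an evident candidate, and then to extend it over the rest of $Y'$; the injectivity statements of \fullref{lem:injective} will let me reduce the verification of \eqref{item:FQ} and \eqref{item:w} to the behaviour on that piece. Write $B=Y'/G$ for the quotient by the free $G$--action, $\pi\co Y'\to B$ for the double cover, and $\gamma\in H^{1}(B;\Z/2)$ for its classifying class; since $\iota$ reverses the orientation of $Y'$, $\gamma=w_{1}(TY'/G)$. Giving an oriented rank-$2$ $G$--vector bundle $F$ on $Y'$ is the same as giving a rank-$2$ bundle on $B$ whose pullback to $Y'$ is orientable; I write $F/G$ for such a bundle and $F$ for its oriented pullback, so that \eqref{item:FQ} and \eqref{item:w} become $e(F)=e_{M}'$ over $\Q$ and $w_{i}(F/G)=w_{i}(TY'/G)$ for $i=1,2$. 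Since we are free to choose $X_{0}$, I also assume $X_{0}$ simply connected---attainable by surgering generators of $\pi_{1}(X_{0})$, the framings chosen so that the spin structure survives---so that $H^{*}(X\x S^{2};\Z)$ is torsion-free; then $H^{k}(Y';\Z)$ is torsion-free for $k\le 3$ by \fullref{lem:injective}, so $H^{1}(Y';\Z)=0$, and by \fullref{lem:mod4} the class $e_{M}'$ has a unique integral lift, which moreover is divisible by $2$.

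On $X\x S^{2}$ I would take $F_{1}:=f_{X}^{*}(TS^{2})$, with the $G$--action induced from the $G$--action on $X\x S^{2}$ (the product of the action on $X$ with the antipodal map on $S^{2}$) through the derivative of the antipodal map. Then $e(F_{1})=f_{X}^{*}e(TS^{2})=e_{M}'|_{X\x S^{2}}$ by \eqref{eq:e}, and the $G$--equivariant splitting of $T(X\x S^{2})$ as the sum of the pullbacks of $TX$ and of $TS^{2}$ descends, over $(X\x S^{2})/G$, to $T\bigl((X\x S^{2})/G\bigr)\cong N\oplus(F_{1}/G)$ with $N$ of rank $4$. Over each component of $(X\x S^{2})/G$---namely $X_{0}\x S^{2}$ over $X_{1}\amalg X_{2}$, and $X_{0}\x\RP^{2}$ over $X_{3}$---the summand $N$ is pulled back from the spin $4$--manifold $X_{0}$, so $w_{1}(N)=w_{2}(N)=0$; hence $w_{i}(F_{1}/G)=w_{i}(TY'/G)|_{(X\x S^{2})/G}$ for $i=1,2$, and $w_{1}(F_{1}/G)=\gamma|_{(X\x S^{2})/G}$ since the antipodal derivative reverses fibre orientation.

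Suppose now that $F_{1}$ (with its $G$--structure, up to the adjustment indicated below) extends to an oriented rank-$2$ $G$--bundle $F$ on all of $Y'$. Then \eqref{item:FQ} follows: $e(F)\in H^{2}(Y';\Z)$ restricts on $X\x S^{2}$ to $e(F_{1})$, which is the unique integral lift of $e_{M}'|_{X\x S^{2}}$; the integral lift of $e_{M}'$ has the same restriction, so by injectivity of $H^{2}(Y';\Z)\to H^{2}(X\x S^{2};\Z)$ the two agree, i.e.\ $e(F)=e_{M}'$ over $\Q$. For \eqref{item:w}: possibly replacing $F/G$ by its tensor product with the line bundle over $B$ whose first Stiefel--Whitney class is the nontrivial element of $H^{1}(B;\Z/2)$---this alters neither $e(F)$ over $\Q$ nor $w_{1}(F/G)=\gamma$, and the restriction $H^{1}(B;\Z/2)\to H^{1}((X\x S^{2})/G;\Z/2)$ is bijective by \fullref{lem:injective}, both groups being $\Z/2$---I can arrange that $F/G$ restricts over $(X\x S^{2})/G$ to $F_{1}/G$; then $w_{i}(F/G)$ and $w_{i}(TY'/G)$ have the same restriction to $(X\x S^{2})/G$, and as $H^{i}(Y'/G;\Z/2)\to H^{i}((X\x S^{2})/G;\Z/2)$ is injective for $i\le 3$ (\fullref{lem:injective}) they agree.

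The remaining---and hardest---step is the extension of $F_{1}$ across $Y_{Q}$. Regarding an oriented rank-$2$ bundle as a complex line bundle and a $G$--structure as an anti-linear involution covering $\iota$, this is the problem of extending a section of a bundle over $B$ with fibre $\CP^{\infty}$ from the subcomplex $(X\x S^{2})/G$; as $\CP^{\infty}$ has homotopy concentrated in degree $2$ (on which $G$ acts by $-1$ through $\gamma$), the sole obstruction is a class in $H^{3}(Y_{Q}/G,\HQ/G;\Z_{\gamma})$, with $\Z_{\gamma}$ the local system twisted by $\gamma$. Since $e_{M}'$ is integral, the underlying non-equivariant line bundle already extends over $Y'$, which forces this obstruction to pull back to $0$ on $Y'$; I expect the heart of the matter to be the deduction that it is therefore $0$, via injectivity of the relevant pullback (or transfer) map for $Y'\to B$ in this degree, and it is here that one must exploit the explicit homotopy type of $Y_{Q}$ (equivalently, of $Y\setminus Q$) rather than merely the formal properties of $(Y,Q)$ used so far.
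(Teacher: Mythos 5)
Your overall plan mirrors the paper's: build the bundle on $X\times S^{2}$, extend by equivariant obstruction theory, and use \fullref{lem:injective} to push the verification of \eqref{item:FQ} and \eqref{item:w} down to $X\times S^{2}$. Your identification of $F_{1}=f_{X}^{*}TS^{2}$ with the antipodal $G$--action is exactly the local model the paper uses (the paper realizes $F$ as $f_{M}^{*}F_{1}$ for a $G$--map $f_{M}\co Y'\to\CP^{3}$, with $F_{1}$ the vertical tangent bundle of $\CP^{3}\to\HP^{1}$, whose restriction to $\CP^{1}=S^{2}$ is $TS^{2}$ with the antipodal involution).

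However, your proof has a genuine gap exactly where you flag it: the vanishing of the obstruction. If you try to extend the classifying map directly from $\HQ$ (equivalently from $X\times S^{2}$) over $Y_{Q}$, the obstruction group is $H^{3}(Y_{Q}/G,\HQ/G;\Z_{\gamma})$, and this group does \emph{not} vanish. Indeed, by Lefschetz duality $H^{3}(Y_{Q},\HQ;\Z)\cong H_{3}(Y_{Q})\cong H_{3}(Y\setminus Q)$, which is nonzero (the classes $[S^{3}_{1}]$, $[S^{3}_{2}]$ survive), so the untwisted relative $H^{3}$ is already nontrivial, and the twisted group over the quotient cannot be disposed of by a general injectivity argument. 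In particular, your proposed fallback --- that the non-equivariant extension on $Y'$ forces the equivariant obstruction to die on $Y'/G$ via injectivity of the pullback in $H^{3}$ --- is not available: the relative pullback $H^{3}(Y_{Q}/G,\HQ/G;\Z_{\gamma})\to H^{3}(Y_{Q},\HQ;\Z)$ sits in a Gysin-type long exact sequence and has no reason to be injective; and one would still have to show that the specific obstruction class, not merely its pullback, vanishes.

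The paper resolves this by \emph{enlarging the subcomplex from which one extends}. It first extends $f_{Q}$ by hand over the annuli $C_{1}\subset S^{3}_{1}$ and $C_{2}\subset S^{3}_{2}$ to a $G$--equivariant map $f_{C}\co C=\HQ\cup C_{1}\cup C_{2}\to S^{2}$ (\fullref{lem:fC}; this is possible because the degrees on the two boundary spheres of each $C_{i}$ are $+1$ and $-1$). Relative to this bigger $C$, the untwisted groups $H^{k}(Y_{Q},C;\Z)$ and $H^{k}(Y_{Q}/G,C/G;\Z)$ both vanish for $k\le 3$ (the $3$--spheres $S^{3}_{i}$ that generated $H_{3}(Y_{Q})$ are now killed relative to $C$), and the Gysin long exact sequence of the double cover then forces $H^{3}(Y_{Q}/G,C/G;\Z_{-})=0$ (\fullref{lem:o}). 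So the ``heart of the matter'' you point to is handled not by a pullback-injectivity argument but by this change of reference subcomplex, which is precisely where the explicit homotopy type of $Y_{Q}$ enters. Without this step your proof does not go through.

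Two smaller remarks. Your assumption that $X_{0}$ is simply connected is harmless but unnecessary: \fullref{lem:mod4} and \fullref{lem:injective} are all the paper needs, with no control on torsion. And once one has the $G$--equivariant map into $\CP^{3}$ (rather than $\CP^{\infty}$), both items of the proposition follow just as you say, by restricting to $X\times S^{2}$ and invoking \fullref{lem:injective}; that part of your write-up matches the paper.
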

Here, $F/G$ is the quotient of $F$, which is unoriented vector bundle of rank $2$ over
the unoriented manifold $Y'/G$, and here,
$w_{i}$ denotes the $i$--th Stiefel--Whitney class.
The proof will be given in \fullref{sec:vector-bundle}.

Since $G$ acts freely on $Y'$, 
there exits a $G$--equivariant smooth section $s \co Y' \to
F$ transverse to the zero section.
We define
$W = \left\{ x \in Y' \ | \ s(x)= 0 \right\}$,
which is a smooth oriented closed $4$--dimensional $G$--submanifold of $Y'$.
By \fullref{prop:F}~\eqref{item:FQ},
the Poincar\'e dual of $W$ is $e_{M}'$.
The quotient space  $W/G$ is a unoriented smooth submanifold of $Y'/G$.
\begin{lemma}
  $w_{i}(TW/G) = 0$ for $i=1,2$.
  \label{lem:wTW}
\end{lemma}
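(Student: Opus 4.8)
The plan is to identify the normal bundle of $W/G$ in $Y'/G$ with a quotient of $F$, and then to read off $w_{1}(TW/G)$ and $w_{2}(TW/G)$ from the Whitney sum formula. First I would note that $W/G$ is an embedded closed submanifold of $Y'/G$: indeed $W$ is a $G$-invariant submanifold of $Y'$ and $G$ acts freely, so $W/G \hookrightarrow Y'/G$ is an embedding whose normal bundle is the quotient of the $G$-equivariant normal bundle of $W$ in $Y'$. Since the section $s \co Y' \to F$ is $G$-equivariant and transverse to the zero section, the standard identification of the normal bundle of $W$ with $F|_{W}$ (given by $ds$) is $G$-equivariant, the zero section being $G$-invariant; hence it descends to an isomorphism between the normal bundle of $W/G$ and $(F/G)|_{W/G}$. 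Equivalently, $s$ descends to a section of the rank $2$ bundle $F/G$ over $Y'/G$, transverse to the zero section, with zero locus $W/G$. Either way we obtain a splitting
\begin{equation*}
  (TY'/G)|_{W/G}\;\cong\;TW/G\,\oplus\,(F/G)|_{W/G}.
\end{equation*}

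Next I would combine this with \fullref{prop:F}~\eqref{item:w}. Because $F/G$ has rank $2$, its total Stiefel--Whitney class is $1 + w_{1}(F/G) + w_{2}(F/G)$, which by \fullref{prop:F}~\eqref{item:w} equals $1 + w_{1}(TY'/G) + w_{2}(TY'/G)$. Write $a_{i}$ for the restriction of $w_{i}(TY'/G)$ to $W/G$ and $b_{i} = w_{i}(TW/G)$; since $W/G$ is $4$-dimensional, $a_{i} = 0$ for $i \ge 5$, so the Whitney sum formula reads
\begin{equation*}
  1 + a_{1} + a_{2} + a_{3} + a_{4} = (1 + b_{1} + b_{2} + b_{3} + b_{4})(1 + a_{1} + a_{2}).
\end{equation*}
Comparing the degree $1$ parts gives $a_{1} = b_{1} + a_{1}$, hence $b_{1} = 0$; comparing the degree $2$ parts gives $a_{2} = b_{2} + b_{1}a_{1} + a_{2}$, hence $b_{2} = 0$. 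This is exactly $w_{1}(TW/G) = w_{2}(TW/G) = 0$.

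I do not expect any real difficulty here; the only point needing a moment's care is the $G$-equivariance of the normal bundle identification, so that it passes to the free quotient, after which the lemma is the short degree count above. Note that only the cases $i = 1, 2$ of \fullref{prop:F}~\eqref{item:w} are used, and nothing is asserted about $b_{3}$ or $b_{4}$.
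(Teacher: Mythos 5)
Your proof is correct and follows essentially the same route as the paper: the paper's proof of \fullref{lem:wTW} asserts the splitting $TY'/G|_{W/G} \cong TW/G \oplus F/G|_{W/G}$ and concludes directly from \fullref{prop:F}~\eqref{item:w}, which is exactly the Whitney-sum degree count you make explicit. Your additional care about the $G$-equivariance of $ds$ descending to the free quotient is a correct elaboration of what the paper leaves implicit.
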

\begin{proof}
  There is an isomorphism
  $ TY'/G|_{W/G} \cong TW/G \oplus F/G|_{W/G}$.
  Since $TY'/G$ and $F/G$ have the same Stiefel--Whitney classes $w_{i}$
  ($i=1,2$) by \fullref{prop:F}~\eqref{item:w}, we have
  $w_{i}(TW/G) = 0$.
\end{proof}
We can prove \fullref{main:spin-cob}~\eqref{item:spin-null} as follows.
\begin{proof}[Proof of \fullref{main:spin-cob}~\eqref{item:spin-null}]
  By \fullref{lem:wTW}, the closed smooth manifold $W/G$ is orientable and spinnable.
  We fix an orientation of $W/G$, then $\Sign W = \pm 2 \Sign W/G$.
  By Rokhlin's theorem \eqref{eq:rokhlin}, we have $\Sign W/G \equiv 0 \pmod{16}$, and 
  consequently, $\Sign  W \equiv 0 \pmod{32}$.
\end{proof}

\section{\texorpdfstring{$G$--vector bundle}{G-vector bundle}}
\label{sec:vector-bundle}
In this section, we prove \fullref{prop:F}.
To construct the $G$--vector bundle $F$,
we prove the existence of a $G$--equivariant classifying map $f_{M} \co Y' \to \CP^{3}$ of $F$.
Here, a $G$--action on $\CP^{3}$ is defined as follows.

Let $\H$ denote the quaternions spanned by $\left\{ 1, i, j, k \right\}$ over $\R$ such that
$i^{2} = j^{2} = k^{2} = i j k = -1$.
By regarding $\H$ as the complex space $\C \oplus \C j$,
we can identify the complex projective space $\CP(\H^{n+1})$ with $\CP^{2n+1}$ for $n \geq 0$ 
(our main interest is when $n=0, 1$).
The multiplication by $j$ on vectors on $\H^{n+1}$ from the left provides a free involution $\i
\co \CP^{2n+1} \to \CP^{2n+1}$, and so $\CP^{2n+1}$ is a $G$--manifold.
Note that the natural inclusion $S^{2} =\CP^{1} \to \CP^{3}$ commutes with the $G$--action.
The unit $2$--sphere $S^{2} \subset \R^{3}$ has a free $G$--action given by the multiplication
by a scalar $-1$.
We shall identify $\CP^{1}$ with $S^{2}$ as a $G$--manifold.

Let $f_{Q} \co \HQ \to S^{2}$ be the projection onto the fiber given by the
trivialization \eqref{eq:HQ}, and
$f_{X} \co X \times S^{2} \to S^{2}$  be as in \eqref{eq:fX}.  
Let $S^{3}_{i}$ ($i=1,2$) be as in  \eqref{eq:bouquet}.
Note that $f_{X}|_{\HQ} = f_{Q}$ and $\i(S^{3}_{1}) = S^{3}_{2}$.

Let $P_{i}$ ($i=1,2,3$) be $0$--dimensional submanifolds of $Y$ defined as follows:
\begin{eqnarray*}
  P_{1} = \left\{ (0,\infty) \right\},\qquad
  P_{2} = \left\{ (\infty, 0) \right\},\qquad
  P_{3} = \left\{ (\infty, \infty) \right\},
\end{eqnarray*}
then
$S^{3}_{1} \cap Q = P_{1} \cup (-P_{3})$ and $S^{3}_{2} \cap Q = P_{2} \cup (-P_{3})$
as oriented manifolds.
We define
\begin{equation*}
  C_{i} = S^{3}_{i} \setminus (Q \x \Int D^{3}) \quad \text{($i=1,2$)},
\end{equation*}
which is a proper $3$--submanifold of $Y_{Q}$.
We shall assume that $C_{i}$ is diffeomorphic to $S^{2}\times [0,1]$, by choosing a small tubular
neighborhood $Q \x D^{3}$ of $Q$ (so that $S^{3}_{i} \cap (Q \x D^{3})$ is the disjoint union of two small
$3$--balls in $S^{3}_{i}$).
In particular, the boundary $\bd{C_{i}}$ is the disjoint union two $2$--spheres
$\bd{C_{i}} = (S^{3}_{i} \cap \HM_{i}) \amalg (-S^{3}_{i} \cap \HM_{3})$.
The involution $\i \co Y' \to Y'$ restricts to a diffeomorphism $\i|_{C_{1}} \co C_{1} \to C_{2}$.
We write
\begin{equation*}
  C = \HQ \cup C_{1} \cup C_{2},
\end{equation*}
then $\i(C) = C$.
\begin{lemma}
  The map $f_{Q} \co \HQ \to S^{2}$ extends to a $G$--equivariant map 
  $f_{C} \co C \to S^{2}$.
  \label{lem:fC}
\end{lemma}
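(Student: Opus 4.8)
The plan is to build $f_C$ piece by piece, first on $\HQ$ using the given trivialization, and then extend over the two collars $C_1$ and $C_2$ in a way compatible with the involution. Recall from \eqref{eq:HQ} that $\HQ = Q \x S^2$ and $f_Q$ is the projection onto the second factor; by \eqref{eq:nuQ} the involution acts on $\HQ$ by $\i(x,v) = (\i(x),-v)$, and we have identified $\CP^1 = S^2 \subset \R^3$ as a $G$-manifold with $G$ acting by the antipodal map $v \mapsto -v$. Thus $f_Q(\i(x,v)) = -v = \i(f_Q(x,v))$, so $f_Q$ is already $G$-equivariant. The goal is therefore to extend $f_Q$ equivariantly over $C_1 \cup C_2$, keeping in mind that $\i$ interchanges $C_1$ and $C_2$.

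First I would observe that, because $\i|_{C_1}\co C_1 \to C_2$ is a diffeomorphism and $\i^2 = \mathrm{id}$, it suffices to define $f_C$ arbitrarily (continuously, smoothly) on $C_1$ so as to agree with $f_Q$ on $\bd C_1 \subset \HQ$, and then \emph{define} $f_C$ on $C_2$ by the forced formula $f_C(y) = \i\big(f_C(\i(y))\big)$ for $y \in C_2$. One must then check two compatibility conditions: (a) the resulting map is well-defined and continuous along $\bd C_2 \subset \HQ$, i.e. the forced formula on $C_2$ restricts on $\bd C_2$ to the already-defined $f_Q$; and (b) the two definitions do not clash on $C_1 \cap C_2$, which is empty (the $S^3_i$ meet only at $(\infty,\infty) \in \HM_3$-side, which has been removed into the tubular neighbourhood $Q \x \Int D^3$), so there is nothing to check there. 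For (a): on $\bd C_2$ we have, for $y \in \bd C_2$, that $\i(y) \in \bd C_1 \subset \HQ$, so $f_C(\i(y)) = f_Q(\i(y))$; applying $\i$ and using that $f_Q$ is equivariant gives $\i(f_Q(\i(y))) = f_Q(y)$, as required.

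So the only real content is the extension of $f_Q|_{\bd C_1}$ over $C_1$. Here I would use that $C_1 \cong S^2 \x [0,1]$, with $\bd C_1 = (S^3_1 \cap \HM_1) \amalg (-S^3_1 \cap \HM_3)$ a disjoint union of two $2$-spheres, and that $S^2$ is connected, indeed that $[S^2, S^2] = \Z$ via degree. The restriction of $f_Q$ to each boundary $2$-sphere is a map $S^2 \to S^2$; the two boundary components of $C_1 \cong S^2 \x [0,1]$ are homotopic inside $C_1$, so an extension over the cylinder exists precisely when the two boundary maps have the same degree. One computes these degrees from the explicit trivialization \eqref{eq:HQ} and the geometry of $S^3_i$ near the punctures: each of $S^3_1 \cap \HM_1$ and $S^3_1 \cap \HM_3$ is the unit sphere in a normal $3$-plane to a copy of $M$ sitting inside $S^3_1 \x \{pt\}$ or the diagonal, and $f_Q$ on it is the identification of that linking sphere with $S^2 = \CP^1$; a direct check of orientations (this is where the blow-up gluing map $g_\vp$ enters) shows both restrictions have degree $1$, or at any rate the \emph{same} degree. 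Hence an extension $f_C|_{C_1}\co C_1 \to S^2$ exists.

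The main obstacle is precisely this degree bookkeeping on $\bd C_1$: one must track the orientation conventions (the ``outward normal first'' boundary convention, the orientation of the linking $2$-spheres $\HM_i$, and the sign coming from $-S^3_1 \cap \HM_3$) carefully enough to be sure the two boundary restrictions of $f_Q$ agree in degree, so that no obstruction in $\pi_2(S^2) = \Z$ survives. Everything else — equivariance, continuity along $\bd C_2$, smoothing — is formal once the extension over $C_1$ is in hand. I would close the argument by noting that the map so constructed is $G$-equivariant by its very definition on $C_2$ and by the equivariance of $f_Q$ on $\HQ$, and smooth after a standard collar-smoothing near $\bd C_1 \cup \bd C_2$.
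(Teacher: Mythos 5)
Your proposal takes essentially the same route as the paper's proof: extend $f_Q|_{\partial C_1}$ over the cylinder $C_1$ by a degree count, then transport the extension to $C_2$ via the equivariance-forced formula $f_C=\i\circ f_{C_1}\circ\i$ (which you state correctly, including the outer $\i$). The single nontrivial step --- verifying that the two boundary restrictions of $f_Q$ have compatible degrees so that the obstruction in $\pi_2(S^2)$ vanishes, which the paper records as $+1$ on $S^3_1\cap\HM_1$ and $-1$ on $S^3_1\cap\HM_3$ relative to its orientation conventions --- is exactly what you flag as the ``main obstacle'' and leave as an assertion; the remaining checks (well-definedness along $\partial C_2$, disjointness $C_1\cap C_2=\emptyset$, equivariance of the resulting map) you carry out correctly and in more detail than the paper does.
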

\begin{proof}
  By the definition of $f_{Q}$, the two maps
  \begin{equation*}
	f_{Q}|_{S^{3}_{1} \cap \HM_{1}} \co S^{3}_{1} \cap \HM_{1} \to S^{2},\quad
	f_{Q}|_{S^{3}_{1} \cap \HM_{3}} \co S^{3}_{1} \cap \HM_{3} \to S^{2}
  \end{equation*}
  have the degree $+1$ and $-1$ respectively.
  Therefore $f_{Q}|_{\bd{C_{1}}} \co \bd{C_{1}} \to S^{2}$ extends to a map $f_{C_{1}} \co C_{1}
  \to S^{2}$. 
  We define a map $f_{C} \co C \to S^{2}$ by
  \begin{equation*}
	f_{C}(x) = 
	\begin{cases}
	  f_{Q}(x) & \text{if $x \in \HQ$}\\
	  f_{C_1}(x) & \text{if $x \in C_{1}$}\\
	  f_{C_{1}}(\i(x)) & \text{if $x \in C_{2}$}
	\end{cases}
  \end{equation*}
  for $x \in C$.
  It is easy to check that this is well--defined and $G$--equivariant.
\end{proof}

To obtain a classifying map $f_{M} \co Y_{Q} \to \CP^{3}$, 
we consider the obstruction classes to extending the map $f_{C}$ to
a $G$--equivariant map $f_{M}$.   
The primary obstruction class belongs to the cohomology group
\begin{equation}
  \label{eq:obstruction}
  H^{3}(Y_{Q}/G, C/G; \Z_{-}),
\end{equation}
where $\Z_{-}$ is the local system on $Y_{Q}/G$
given by the non--trivial 
characteristic homomorphism $G \to \Aut(\pi_{2}(\CP^{3})) = \left\{ id, -id \right\}$.
In other words, \eqref{eq:obstruction} is the $G$--equivariant cohomology group with
coefficients in the non--trivial $G$--module $\Z$
(such that $\i 1 = -1$).
\begin{lemma}
  \label{lem:o}
  The obstruction group \eqref{eq:obstruction} vanishes.
\end{lemma}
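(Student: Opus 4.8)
The plan is to identify the group \eqref{eq:obstruction} with the kernel of a single $\Z$--valued map and then show that map is injective. Write $N=Y_{Q}/G$ and $\bd{N}=\HQ/G$; since $G$ acts freely on $Y_{Q}$, $N$ is a compact (non--orientable) $6$--manifold with boundary $\bd{N}$, and \eqref{eq:obstruction} is $H^{3}(N,C/G;\Z_{-})$, where $\Z_{-}$ is the $\Z$--local system classified by the class $c\in H^{1}(N;\Z/2)$ of the double cover $Y_{Q}\to N$. I would first feed the triple $(N,C/G,\bd{N})$ into its long exact cohomology sequence. The space $C/G$ is obtained from $\bd{N}=\HQ/G$ by attaching a single copy of $C_{1}\cong S^{2}\x[0,1]$ along the two $2$--spheres of $\bd{C_{1}}\subset\HQ$ (here one uses that $\i$ interchanges the disjoint submanifolds $C_{1}$ and $C_{2}$). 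As $C_{1}$ is simply connected, $\Z_{-}$ restricts trivially to it, so by excision
\[
H^{k}(C/G,\bd{N};\Z_{-})\cong H^{k}\bigl(S^{2}\x[0,1],\,S^{2}\x\{0,1\};\Z\bigr),
\]
which is $\Z$ for $k=1,3$ and $0$ otherwise. In particular $H^{2}(C/G,\bd{N};\Z_{-})=0$, so the triple sequence gives an exact sequence
\[
0\to H^{3}(N,C/G;\Z_{-})\to H^{3}(N,\bd{N};\Z_{-})\xrightarrow{\ r\ }H^{3}(C/G,\bd{N};\Z_{-})\cong\Z ,
\]
and the lemma is equivalent to the injectivity of $r$.

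Because $N$ is the quotient of the oriented manifold $Y_{Q}$ by a free orientation--reversing involution, $w_{1}(TN)=c$, so the orientation local system of $N$ is $\Z_{-}$ itself; hence $\Z_{-}\otimes\Z_{-}\cong\Z$, and Poincar\'e--Lefschetz duality identifies $H^{3}(N,\bd{N};\Z_{-})$ with $H_{3}(N;\Z)$ while turning $r$ into the geometric intersection pairing with the properly embedded $3$--submanifold $C_{1}\subset N$. So the lemma becomes: a class in $H_{3}(N;\Z)$ whose intersection number with $C_{1}$ vanishes must be $0$.

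To prove this I would compute $H_{3}(N;\Z)$. Using $Y_{Q}\simeq Y\setminus Q$, the long exact sequence of $(Y,Y\setminus Q)$ together with the Thom isomorphism (the normal bundles of $M_{1},M_{2},M_{3}$ are trivial) identifies $H_{3}(Y_{Q};\Z)$ with the kernel of $H_{3}(Y;\Z)\to\Z^{3}$, $z\mapsto(z\cdot M_{1},\,z\cdot M_{2},\,z\cdot M_{3})$. Since $H_{3}(Y;\Z)\cong\Z^{4}$ carries a unimodular intersection form and $[M_{1}]+[M_{2}]=[M_{3}]$ with $[M_{1}],[M_{2}]$ linearly independent, this kernel is free of rank $2$; and since $\i$ interchanges $M_{1}$ and $M_{2}$, it is the free $\Z[G]$--module of rank $1$. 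The same computation gives $H_{1}(Y_{Q};\Z)=0$ and $H_{2}(Y_{Q};\Z)\cong\Z_{-}$. Feeding this into the Cartan--Leray spectral sequence of $Y_{Q}\to N$, the term $H_{1}(G;H_{2}(Y_{Q}))=H_{1}(G;\Z_{-})$ vanishes and the differential $d_{3}$ should carry $H_{3}(G;\Z)=\Z/2$ isomorphically onto $H_{0}(G;H_{2}(Y_{Q}))$, leaving $H_{3}(N;\Z)\cong H_{3}(Y_{Q})_{G}\cong\Z$, torsion--free. A generator is $p_{*}[M_{1}]$ for $p\colon Y_{Q}\to N$, and by the projection formula its intersection number with $C_{1}$ in $N$ equals $M_{1}\cdot(C_{1}\amalg C_{2})$ computed in $Y_{Q}$, hence in $Y$. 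As $C_{1}\subset S^{3}_{1}$, $C_{2}\subset S^{3}_{2}$, and the part of $M_{1}$ on the $S^{3}\x S^{3}$ side is contained in $S^{3}\x\{0\}$, one gets $M_{1}\cdot C_{1}=0$ (since $S^{3}\x\{0\}$ misses $S^{3}\x\{\infty\}=S^{3}_{1}$) and $M_{1}\cdot C_{2}=\pm1$ (a single transverse point over $(\infty,0)\in S^{3}\x\{0\}\cap\bigl(\{\infty\}\x S^{3}\bigr)$). Thus $p_{*}[M_{1}]\cdot C_{1}=\pm1\neq0$, $r$ is injective, and \eqref{eq:obstruction} vanishes.

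The main obstacle lies in the last paragraph, and in particular in two integrality statements: that $H_{3}(N;\Z)$ is torsion--free (equivalently, that the Cartan--Leray differential $d_{3}$ kills the copy of $H_{3}(G;\Z)=\Z/2$ in total degree $3$), and that the intersection of a generator of $H_{3}(N;\Z)$ with $C_{1}$ is exactly $\pm1$. Pinning down the homology classes $[M_{i}]$, $[C_{i}]$ in $H_{3}(Y;\Z)$ and the $\Z[G]$--module structure of $H_{3}(Y_{Q};\Z)$ precisely enough to see both is where the real work is; the reductions in the earlier paragraphs are formal.
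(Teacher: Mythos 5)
Your route to the lemma is genuinely different from the paper's, and, as you yourself flag, it is not complete. The paper's proof is a short squeeze: it asserts that $H^{k}(Y_{Q},C;\Z)$ and $H^{k}(Y_{Q}/G,C/G;\Z)$ both vanish for $k\leq 3$, and then reads off the vanishing of $H^{k}(Y_{Q}/G,C/G;\Z_{-})$ from the Gysin/transfer long exact sequence of the double cover $q\colon Y_{Q}\to Y_{Q}/G$, which sandwiches the $\Z_{-}$--groups between the $\Z$--groups of total space and base. That route never invokes twisted Poincar\'e--Lefschetz duality, the Cartan--Leray spectral sequence, or an explicit intersection count; it is a direct low-degree cohomology computation.

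Your reductions in the first two paragraphs are correct: the excision $H^{k}(C/G,\bd{N};\Z_{-})\cong H^{k}(S^{2}\times[0,1],S^{2}\times\{0,1\};\Z)$, the vanishing of that group in degree $2$, and the resulting identification of the lemma with the injectivity of $r$ are all fine, as is the twisted duality $H^{3}(N,\bd{N};\Z_{-})\cong H_{3}(N;\Z)$. The genuine gap is exactly where you locate it. In the Cartan--Leray $E^{2}$--page one has $E^{2}_{3,0}\cong H_{3}(G;\Z)\cong\Z/2$ and $E^{2}_{0,2}\cong H_{0}(G;H_{2}(Y_{Q}))\cong H_{0}(G;\Z_{-})\cong\Z/2$, and your argument lives or dies on $d_{3}\colon E^{3}_{3,0}\to E^{3}_{0,2}$ being an isomorphism. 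Nothing in your proposal determines that differential; if it vanished, $H_{3}(N;\Z)$ would carry $2$--torsion which necessarily lies in the kernel of $r$, and the injectivity would fail. Pinning down $d_{3}$ (equivalently, proving $H_{3}(N;\Z)$ is torsion-free by some other means) is real work, and it is circular to derive the torsion-freeness from the lemma itself. The intersection count $p_{*}[M_{1}]\cdot C_{1}=\pm 1$ likewise needs to be verified carefully, keeping track of the $G$--equivariant Thom signs, though that part looks routine. In short: the reduction is sound but the final two integrality claims, on which everything rests, are asserted rather than proved, and the $d_{3}$ claim in particular is not obviously easier than the paper's direct computation of the untwisted relative groups.
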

\begin{proof}
  The low--dimensional cohomology groups of $(Y_{Q}, C)$ and $(Y_{Q}/G, C/G)$ are given as follows:
  \begin{equation}
	\label{eq:H-vanish}
	  H^{k}(Y_{Q}, C; \Z) \cong
	  H^{k}(Y_{Q}/G, C/G; \Z) \cong
	  0
	  \quad \text{($k\leq 3$)}
  \end{equation}
  There is a long exact sequence 
  \begin{equation*}
	\dotsb \xrightarrow{\d^*}
	H^{k}(Y_{Q}/G, C/G; \Z_{-})
	\xrightarrow{q^{*}}
	H^{k}(Y_{Q}, C;\Z)
	\xrightarrow{q_{!}}
	H^{k}(Y_{Q}/G, C/G; \Z)
	\to \dotsb,
  \end{equation*}
  where $q^*$ is the pull--back of the covering map $q \co Y_{Q} \to Y_{Q}/G$, and
  $q_{!}$ is the Gysin homomorphism.
  The vanishing \eqref{eq:H-vanish} and the exact sequence implies 
  \[
  H^{k}(Y_{Q}/G, C/G; \Z_{-}) = 0\quad \text{($k \leq 3$)}.
  \proved
  \]
\end{proof}
\begin{proposition}
  There exists a $G$--equivariant map $f_{M} \co Y' \to \CP^{3}$ such that $f_{M}|_{\HQ}
  =f_{Q}$.
  \label{lem:map}
\end{proposition}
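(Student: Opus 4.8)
The plan is to build $f_{M}$ separately on the two pieces of the decomposition $Y' = Y_{Q} \cup_{\HQ} (-X \x S^{2})$ and then glue. On the $-X \x S^{2}$ summand I would simply take $f_{M} = \iota_{0} \circ f_{X}$, where $f_{X} \co X \x S^{2} \to S^{2}$ is the projection \eqref{eq:fX} and $\iota_{0} \co S^{2} = \CP^{1} \hto \CP^{3}$ is the equivariant inclusion fixed above; this is $G$--equivariant because the $G$--action on $X \x S^{2}$ is $(x,v) \mapsto (\i(x), -v)$ and the induced action on $\CP^{1} = S^{2}$ is the antipodal map, and on $\HQ = \bd{(X \x S^{2})}$ it restricts to $\iota_{0} \circ f_{Q}$ since $f_{X}|_{\HQ} = f_{Q}$. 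It then remains to extend the $G$--equivariant map $f_{C} \co C \to S^{2} \subset \CP^{3}$ of \fullref{lem:fC} to a $G$--equivariant map $f_{Y_{Q}} \co Y_{Q} \to \CP^{3}$ with $f_{Y_{Q}}|_{C} = f_{C}$; gluing $f_{Y_{Q}}$ to $\iota_{0} \circ f_{X}$ along $\HQ$, where both equal $\iota_{0}\circ f_{Q}$, yields the desired $G$--equivariant $f_{M} \co Y' \to \CP^{3}$ with $f_{M}|_{\HQ} = f_{Q}$.

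For the extension I would use equivariant obstruction theory, and the key point is that $G$ acts \emph{freely} on $Y_{Q}$: the fixed set of the $G$--action on $Y$ is $M_{3} \subset Q$, which is deleted in forming $Y_{Q} = Y \setminus U_{Q}$. Hence a $G$--equivariant map $Y_{Q} \to \CP^{3}$ is precisely a section of the associated $\CP^{3}$--bundle over $Y_{Q}/G$, and extending $f_{C}$ amounts to extending a section defined over $C/G$ across the cells of the pair $(Y_{Q}/G, C/G)$, never modifying it on $C/G$. The obstruction to extending over the $(n+1)$--skeleton, given the section over the $n$--skeleton, lies in $H^{n+1}(Y_{Q}/G, C/G; \pi_{n}(\CP^{3}))$ with the local system induced by the $\pi_{1}$--action; since $Y_{Q}$ is connected, the resulting $\Z/2$--action on $\pi_{2}(\CP^{3}) \cong \Z$ is multiplication by $-1$ (the antipodal map on $\CP^{1} \subset \CP^{3}$ has degree $-1$), so this local system is exactly the twisted $\Z_{-}$ appearing in \eqref{eq:obstruction}.

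Finally, the homotopy of $\CP^{3}$ kills every obstruction but one: $\CP^{3}$ is $1$--connected, and the fibration $S^{1} \to S^{7} \to \CP^{3}$ gives $\pi_{k}(\CP^{3}) \cong \pi_{k}(S^{7})$ for $k \geq 3$, so $\pi_{k}(\CP^{3}) = 0$ for $k \in \{0,1,3,4,5\}$ while $\pi_{2}(\CP^{3}) \cong \Z$. As $\dim(Y_{Q}/G) = 6$, the only possibly nonzero obstruction group in play is $H^{3}(Y_{Q}/G, C/G; \Z_{-})$, which vanishes by \fullref{lem:o}. Thus $f_{C}$ extends $G$--equivariantly over all of $Y_{Q}$, completing the construction of $f_{M}$. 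The one genuinely substantive input here — the vanishing of the single relevant obstruction group — has already been supplied by \fullref{lem:o}; the remaining effort is just to set up the equivariant obstruction theory correctly, the delicate points being the identification of the coefficient system as $\Z_{-}$ and the verification that $\pi_{3}(\CP^{3}) = 0$, so that no secondary obstruction survives.
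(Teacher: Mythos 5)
Your proof is correct and follows essentially the same route as the paper's: extend the $G$--equivariant map $f_{C}$ over $Y_{Q}$ by equivariant obstruction theory, with \fullref{lem:o} killing the primary obstruction and the vanishing of the higher homotopy groups of $\CP^{3}$ below degree $7$ killing all the rest. One small but real service your write-up performs is to make explicit the extension over the $X \times S^{2}$ summand of $Y'$ (as $\iota_{0} \circ f_{X}$, matching $f_{Q}$ on $\HQ$); the paper's proof literally ends with an extension over $Y_{Q}$ only, even though the proposition asserts a map on all of $Y'$, and the extension to $X \times S^{2}$ is used tacitly afterward (in \fullref{prop:F}, where $e(F)|_{X\times S^{2}} = f_{X}^{*}e(TS^{2})$ is invoked). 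Your verification that the relevant local system is $\Z_{-}$ — via the fact that the free involution on $\CP^{1}\subset\CP^{3}$ has degree $-1$ on $\pi_{2}$ — is also more careful than what the paper states, and correct.
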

\begin{proof}
  By \fullref{lem:o}, the primary obstruction class vanishes.
  The higher obstruction  groups  vanishes, since
  $\pi_{i}(\CP^{3}) = 0$ for $3 \leq i \leq 6$.
  Hence, $f_{C}$ extends to a $G$--equivariant map $f_{M} \co Y_{Q} \to \CP^{3}$.
\end{proof}

Now,  let us consider the fiber bundle
$
  \rho \co \CP^{3} \to \HP^{1}
$
which maps a complex line $l$ in $\H^{2}$ to the corresponding quaternionic
line $\H \otimes_{\C}l$.
The fiber of $\rho$ is $\CP^{1}$, and the $G$--action preserves the fiber.
Let $F_{1} \subset T\CP^{3}$ be the tangent subbundle of $\CP^{3}$ with respect to $\rho$,
which is an oriented vector bundle of rank $2$ over $\CP^{3}$ with a $G$--action.
We then define
\begin{equation}
  \label{eq:F}
  F = f_{M}^*F_{1}
\end{equation}
to be the pull--back of $F_{1}$ under $f_{M}$.
It is an oriented vector bundle of rank $2$ over $Y'$ with a $G$--action.

\begin{proof}[Proof of \fullref{prop:F}]
  By the construction of $F$, we have
  \[
	e(F)|_{X \x S^{2}} = f_{X}^*e(TS^{2}) = e_{M}'|_{X \x S^{2}}.
  \]
  By \fullref{lem:injective}, the homomorphism $H^{2}(Y';\Q) \to H^{2}(X \x S^{2};\Q)$ is
  injective, and therefore, $e(F) = e_{M}'$, and \fullref{prop:F}~\eqref{item:FQ} holds.

  The vector bundle $F$ restricts to $f_{X}^* TS^{2}$ over $X \x S^{2}$.
  The quotient manifold $(X \x S^{2})/G$ is diffeomorphic to the disjoint union of $X \x S^{2}$
  and $X \x \RP^{2}$. 
  Since $X$ is oriented and spin, we have
  \begin{equation*}
	w_{i}(TY'/G)|_{(X \x S^{2})/G} = w_{i}(f_{X}^*TS^{2}/G) 
	= w_{i}(F/G)|_{(X \x S^{2})/G}\quad \text{($i=1,2$)}.
  \end{equation*}
  By \fullref{lem:injective}, we have $w_{i}(TY'/G) = w_{i}(F/G)$.
  Namely, \fullref{prop:F}~\eqref{item:w} holds.
\end{proof}

\section{\texorpdfstring{Appendix: Yet another proof of \fullref{maincor}}{Appendix: Yet
another proof of Corollary \ref{mincor}}}
\label{sec:yap}

Let $M$ be an oriented integral homology $3$--sphere, and $\a_{M} = (Y,Q,e_{M})$ the
$6$--dimensional closed spin $e$-manifold constructed from $M$.
The aim of this section is to give
yet another direct proof of \fullref{maincor} using
\fullref{main:spin-cob} and without using \fullref{thm:sigma}.

\begin{proof}
  Let us assume $M \cong -M$, then $\a_{M} \cong -\a_{M}$ by
  \fullref{main:spin-cob}~\eqref{item:reverse}.
Namely, there exists a diffeomorphism 
\begin{equation*}
  h \co (Y,Q) \to (Y,Q)
\end{equation*}
which reverses the orientations of $Y$ and $Q$ such that $h^*e_{M} = e_{M}$.
By \fullref{main:spin-cob}~\eqref{item:spin-null},
there exists a $7$--dimensional spin $e$-manifold
$
  \b = (Z,X,\Te)
$
such that $\bd{\b} = \a_{M}$.

Let us consider the $7$--dimensional closed spin $e$-manifold
\begin{equation*}
  \b' = \b \cup_{h} \b
\end{equation*}
obtained by gluing the boundaries of two disjoint copies of
$\b$ by using $h$, more precisely,  we can write
\begin{equation*}
  \b' = (Z',X',\Te'),\quad
  Z' = Z \cup_{h}Z,\quad
  X' = X \cup_{h}X,
\end{equation*}
where $\Te' \in H^{2}(Z' \setminus X'; \Q)$ is the $e$-class of $(Z',X')$
obtained by gluing two copies of $\Te$.
Note that the manifolds $Z'$ and $X'$ are closed spin.

What we need to prove is $\Sign X \equiv 0 \pmod{16}$, or equivalently
\begin{equation*}
  \Sign X' \equiv 0 \pmod{32}.
\end{equation*}
It is easy to show that there is the following formula (see also \fullref{lem:cu}):
\begin{equation*}
  \Sign X'  = \frac{1}{6} \int_{\HX'}^{}p_{1}(T\HX') e(F_{X'}) - e(F_{X'})^{3}
\end{equation*}
Since $\bd{Z'_{X'}} = -\HX'$ and $\Te'$ is an $e$-class of $(Z',X')$ 
(namely, $\Te'|_{\HX'} = e(F_{X'})$ by definition),
the right--hand side is equal to
\begin{equation*}
  -\frac{1}{6}\int_{\bd{Z'_{X'}}}^{} p_{1}(TZ'_{X'}) \Te' - \Te'^{3} = 0
\end{equation*}
by Stokes' theorem.
Consequently,
$ \Sign X' \equiv 0 \pmod{32}$.
\end{proof}





%
%
%
\bibliographystyle{gtart}
\bibliography{articles,books,preprints}
%

\end{document}